\newcommand{\ObSchS}{{\rm{Ob}}((Sch/S)_{fppf})}
\newcommand{\calc}{\mathcal C}
\newcommand{\Ob}{{\rm{Ob}}}
\newcommand{\Z}{\mathbb Z}
\newcommand{\N}{\mathbb N}
 \theoremstyle{plain}
\newtheorem{theorem}{Theorem}[section]
\newtheorem{corollary}[theorem]{Corollary}
\newtheorem{lemma}[theorem]{Lemma}
\newtheorem{proposition}[theorem]{Proposition}
\newtheorem{definition-proposition}[theorem]{Definition/Proposition}
\newtheorem{definition}[theorem]{Definition}
 \theoremstyle{definition}
\newtheorem{definition1}[theorem]{Definition}
\theoremstyle{remark}
\newtheorem{remark}[theorem]{Remark}
\numberwithin{equation}{section}
\newcommand{\lemref}[1]{Lemma~\ref{#1}}
\newcommand{\proref}[1]{Proposition~\ref{#1}}
\newcommand{\defref}[1]{Definition~\ref{#1}}
\def\@seccntformat#1{\@ifundefined{#1@cntformat}%
    {\csname the#1\endcsname\quad}
    {\csname #1@cntformat\endcsname}}
\newcommand{\section@cntformat}{\S\thesection.\enspace}
\newcommand{\subsection@cntformat}{\S\thesubsection.\enspace}
\newcommand{\subsubsection@cntformat}{\S\thesubsubsection\enspace}
\definecolor{cite}{rgb}{0.50,0.00,1.00}
\definecolor{url}{rgb}{0.00,0.50,0.75}
\definecolor{link}{rgb}{0.00,0.00,0.50}
\begin{document}
\title{Perfect algebraic spaces and perfect morphisms}

\author{Tianwei Liang}
\email{(Tianwei Liang) 2015200055@e.gzhu.edu.cn}

\begin{abstract}
We develop a theory of perfect algebraic spaces that extend the so-called perfect schemes to the setting of algebraic spaces. We prove several desired properties of perfect algebraic spaces. This extends some previous results of perfect schemes, including the recent one developed by Bertapelle et al. in \cite{Bertapellea}. Moreover, our theory extends the previous one developed by Xinwen Zhu in \cite{Zhu}. The perfect morphisms will provide equivalent descriptions to perfect algebraic spaces.

Our method to define perfect algebraic spaces differs from all previous approaches, as we utilize representability of functors. There is a natural notion of algebraic Frobenius morphisms of algebraic spaces, which is analogous to the absolute Frobenius morphisms of schemes. In terms of algebraic Frobenius morphisms, one can define perfection of an algebraic space.
\end{abstract}

\subjclass[2020]{Primary 14A20, 18A23, 14A15}

\keywords{Perfect algebraic spaces; perfect schemes; perfect rings; Frobenius morphism; category theory.}
\thanks{The author would like to thank F.N. Meng for giving some advice.}

\date{\today}
\maketitle

\tableofcontents
\section{Introduction}
Let $p$ be a prime number and $\mathbb{F}_{p}$ a finite field of order $p$.
\subsection{Background and motivation}
The notion of perfect rings is particularly important in both commutative algebra and algebraic geometry. It acts as the foundations of many related fileds. For example, in classical commutative algebra, one has further notions of perfect algebras and perfect fields, which lead to many classic results in algebraic geometry, especially the recent works in \cite{Scholze1,Scholze2}. In some sense, the definitions of perfectness almost completely depends on the existence of so-called Frobenius morphisms. Here we take two familiar cases as an example.

Assume that $A$ is a ring of characteristic $p$. We say that $A$ is \textit{perfect} if the Frobenius endomorphism $\phi:A\rightarrow A,a\mapsto a^{p}$ is an isomorphism. This can be globalized to the setting of schemes. Let $X$ be an $\mathbb{F}_{p}$-scheme. We say that $X$ is \textit{perfect} if the absolute Frobenius morphism $\Phi:X\rightarrow X$ is an isomorphism. The notion of perfect schemes becomes more and more important in algebraic geometry in recent years. Because of its wonderful properties, perfect schemes have become foundations of some important areas of algebraic geometry (see \cite{Scholze, Zhu, Zhu1}).

The prototype of perfect schemes is the so-called perfect varieties that first arose in Serre's classical paper \cite{Serre}. However, a perfect variety is, in general, not a scheme. In \cite{Greenberg}, Greenberg first constructed the perfect closure of an $\mathbb{F}_{p}$-scheme, and in particular, the perfect closure of a ring. At that time, the notion of perfect scheme made its official debut. The perfection of rings or schemes gives rise to the so-called perfection functor. The perfection functor has played a significant role in many areas in algebraic geometry (see, for example, \cite{Bertapelle1,Liu,Boyarchenko}).

The theory of perfect algebraic spaces has been developed recently by Liang Xiao and Xinwen Zhu in \cite{Zhu,Zhu1}. Perfect algebraic spaces generalize perfect schemes to a new step. However, their theory is restricted to algebraic spaces over a perfect field $k$ of characteristic $p$. Meanwhile, they do not mention some fundamental properties of perfect algebraic spaces like taking fibre products, as they are irrelevant in \cite{Zhu,Zhu1}. Another limitation of their theory is that they follow the original method to define perfect algebraic spaces. Their definition is limited in the framework of Frobenius morphisms. More precisely, they avoid the direct Frobenius morphism $F\rightarrow F$, but turn to Frobenius morphisms of $k$-algebras whose Frobenius morphisms make sense. Then the Frobenius morphism $\sigma_{F}:F\rightarrow F$ of $F$ is given by the evaluation on the Frobenius morphisms of $k$-algebras. And $F$ is \textit{perfect} if the Frobenius morphism $\sigma_{F}:F\rightarrow F$ is an isomorphism.

\subsection{Main results}
In this paper, we develop a theory that provides desired extensions of previous works. The direct notion of Frobenius morphism $F\rightarrow F$ of an algebraic space $F$ is vague, since $F$ is a sheaf of sets, and it is impossible to define Frobenius morphism of a set. In order to replace utilizing any kinds of Frobenius morphism $F\rightarrow F$ to define perfect algebraic spaces, we focus on the functoriality of algebraic spaces making use of representability of functors. This makes our approach differs from all previous ones.

We first start by upgrading the well-known theories of perfect rings and perfect schemes. This enables us to apply some nice properties of perfect schemes to the study of perfect algebraic spaces. Our approach generates four types of perfect algebraic spaces, named \textit{perfect}, \textit{quasi-perfect}, \textit{semiperfect}, and \textit{strongly perfect algebraic spaces}.

Let $S$ be some base scheme and $AP_{S}$ be the category of algebraic spaces over $S$. Let $\textrm{Perf}_{AP_{S}}$, $\textrm{QPerf}_{AP_{S}}$, $\textrm{SPerf}_{AP_{S}}$, and $\textrm{StPerf}_{AP_{S}}$ denote the categories of these four types of perfect algebraic spaces respectively. We state the relations between these four types of perfect algebraic spaces and have the following full embedding
\begin{align}
\textrm{Perf}_{AP_{S}}\subset AP_{S},
\end{align}
together with a string of full embeddings
\begin{align}
\textrm{StPerf}_{AP_{S}}\subset \textrm{QPerf}_{AP_{S}}\subset \textrm{SPerf}_{AP_{S}}\subset AP_{S}.
\end{align}
It seems that $\textrm{Perf}_{AP_{S}}$ is disjoint with other categories. However, there is some relation between their subcategories.

Let $\widehat{{\rm{Perf}}_{AP_{S}}}$, $\widehat{{\rm{QPerf}}_{AP_{S}}}$, $\widehat{{\rm{SPerf}}_{AP_{S}}}$, and $\widehat{{\rm{StPerf}}_{AP_{S}}}$ be the subcategories of $\textrm{Perf}_{AP_{S}}$, $\textrm{QPerf}_{AP_{S}}$, $\textrm{SPerf}_{AP_{S}}$, and $\textrm{StPerf}_{AP_{S}}$ of representable perfect algebraic spaces. Then there is a string of full embeddings
\begin{align}
\widehat{{\rm{Perf}}_{AP_{S}}}\subset\widehat{{\rm{StPerf}}_{AP_{S}}}\subset\widehat{{\rm{QPerf}}_{AP_{S}}}\subset\widehat{{\rm{SPerf}}_{AP_{S}}}. \label{T2}
\end{align}

Perfect algebraic spaces satisfy the following statement on fibre products.
\begin{proposition}
Let $F\rightarrow H$ and $G\rightarrow H$ be morphisms of algebraic spaces over $S$. If $F,G$ are perfect and $H$ is quasi-perfect satisfying some extra condition, then the fibre product $F\times_{H}G$ is a perfect algebraic space.
\end{proposition}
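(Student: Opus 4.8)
The plan is to work entirely with functors of points. Since the category of algebraic spaces over $S$ is stable under fibre products, $P:=F\times_{H}G$ is again an algebraic space, and for every scheme $T$ over $S$ one has the tautological description $P(T)=F(T)\times_{H(T)}G(T)$, with the two projections $p\colon P\to F$ and $q\colon P\to G$ inducing on $T$-points the two set-theoretic projections; write $a\colon F\to H$ and $b\colon G\to H$ for the two given morphisms. First I would record that the algebraic Frobenius is a natural endomorphism of the identity functor on $AP_{S}$, so that $\Phi_{H}\circ a=a\circ\Phi_{F}$ and $\Phi_{H}\circ b=b\circ\Phi_{G}$; the universal property of the fibre product then produces a unique endomorphism $\Phi_{P}\colon P\to P$ with $p\circ\Phi_{P}=\Phi_{F}\circ p$ and $q\circ\Phi_{P}=\Phi_{G}\circ q$, and by the same uniqueness $\Phi_{P}$ is forced to be the algebraic Frobenius of $P$. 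On $T$-points $\Phi_{P}(T)$ sends $(x,y)$ to $(\Phi_{F}(T)(x),\Phi_{G}(T)(y))$, which lies in $P(T)$ because $a(x)=b(y)$ gives $a(\Phi_{F}(T)(x))=\Phi_{H}(T)(a(x))=\Phi_{H}(T)(b(y))=b(\Phi_{G}(T)(y))$.

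Next I would reduce ``$P$ is perfect'' to ``$\Phi_{P}$ is an isomorphism'', i.e.\ to ``$\Phi_{P}(T)$ is bijective for every $T$'', using the characterisation of perfectness set up in the earlier sections. Injectivity of $\Phi_{P}(T)$ is immediate and uses nothing about $H$: if $(\Phi_{F}(T)(x),\Phi_{G}(T)(y))=(\Phi_{F}(T)(x'),\Phi_{G}(T)(y'))$ then $x=x'$ and $y=y'$, because $\Phi_{F}(T)$ and $\Phi_{G}(T)$ are bijective ($F$ and $G$ being perfect). For surjectivity, take $(x',y')\in P(T)$ and choose, via the surjectivity of $\Phi_{F}(T)$ and $\Phi_{G}(T)$, elements $x\in F(T)$ and $y\in G(T)$ with $\Phi_{F}(T)(x)=x'$ and $\Phi_{G}(T)(y)=y'$. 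Naturality gives $\Phi_{H}(T)(a(x))=a(x')=b(y')=\Phi_{H}(T)(b(y))$, and here is the only place where $H$ enters: the hypothesis that $H$ is quasi-perfect and meets the extra condition should say exactly that $\Phi_{H}$ is a monomorphism of algebraic spaces, so that $\Phi_{H}(T)$ is injective and hence $a(x)=b(y)$. Then $(x,y)\in P(T)$ and $\Phi_{P}(T)(x,y)=(x',y')$, so $\Phi_{P}(T)$ is onto; as $T$ was arbitrary, $\Phi_{P}$ is an isomorphism and $P$ is perfect.

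The step I expect to be the main obstacle is the surjectivity argument — specifically, confirming that ``quasi-perfect plus the extra condition'' is precisely what forces $\Phi_{H}(T)$ to be injective on the $T$-points occurring as common images $a(x)=b(y)$; if the intended extra condition is weaker than outright monomorphy of $\Phi_{H}$, one should relativise the injectivity statement along the maps $a$ and $b$ and use stability of monomorphisms under base change. A secondary point is bookkeeping: whatever the definition of perfect algebraic space demands beyond invertibility of the algebraic Frobenius (any representability or sheaf-theoretic clauses) must be shown to be inherited by $P$ as a fibre product, which is routine but should be stated; likewise the identification of $\Phi_{P}$ with the endomorphism built above, though forced by the universal property, is worth spelling out.
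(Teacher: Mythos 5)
Your proposal diverges from the paper both in the reading of the hypothesis and in the mechanism of proof, and the divergence creates real gaps. The ``extra condition'' in the paper (Proposition \ref{A7}) is not that the Frobenius of $H$ be a monomorphism; it is that there exist perfect schemes $U,V\in\ObSchS$ and surjective \'etale maps $h_{U}\rightarrow F$, $h_{V}\rightarrow G$ such that $h_{U}\times_{H}h_{V}$ is represented by a perfect scheme (this is where quasi-perfectness of $H$ is meant to help, since it concerns representability of $h_{V}\times_{a,H,b}h_{U}$ by perfect schemes). With that condition the paper's proof is essentially one line: $F\times_{H}G$ is an algebraic space by \cite[Tag04T9]{StackProject}, the map $h_{U}\times_{H}h_{V}\rightarrow F\times_{H}G$ is surjective \'etale as a composition of base changes, and $h_{U}\times_{H}h_{V}\simeq h_{W}$ with $W$ perfect, so $F\times_{H}G$ is perfect directly from Definition \ref{D1}(1). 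Your substitute hypothesis (``$\Psi_{H}$ is a monomorphism'') is neither implied by nor known to imply the paper's condition, so even if your argument were complete it would prove a different proposition.

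Beyond that, the Frobenius route itself has gaps inside this paper's framework. The algebraic Frobenius here is not a canonical endomorphism of the identity functor: $\Psi_{F}$ depends on the choice of an \'etale atlas in characteristic $p$ (the paper stresses the bijection between atlases and algebraic Frobenii), and the commutation statements (Proposition \ref{A6}) are proved for Frobenii attached to compatibly chosen atlases, not for arbitrary pre-assigned choices on $F$, $G$, $H$; so the naturality squares $\Psi_{H}\circ a=a\circ\Psi_{F}$, $\Psi_{H}\circ b=b\circ\Psi_{G}$ that you feed into the universal property are not free. More seriously, the step you defer as ``bookkeeping'' is the crux: perfectness of $P=F\times_{H}G$ in the sense of Definition \ref{D1}(1), or equivalently via Theorem \ref{C5} and its corollary, requires that \emph{an algebraic Frobenius of $P$} (i.e.\ an endomorphism induced, as in Proposition \ref{A11}, from $h(\Phi_{W})$ along some surjective \'etale $h_{W}\rightarrow P$ with $W$ of characteristic $p$) be an isomorphism. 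Showing that your universally-induced endomorphism $\Phi_{P}$ is an isomorphism of sheaves does not by itself show that it is such an algebraic Frobenius, nor that any actual algebraic Frobenius of $P$ is an isomorphism; closing that loop would require exactly the kind of atlas-level argument (a perfect scheme covering $P$ \'etale-surjectively) that the paper's hypothesis hands you at the outset.
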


Moreover, the categories of other types of perfect algebraic spaces enjoys the following desired property on fibre products.
\begin{proposition}
The categories $\rm{QPerf}_{AP_{S}}$, $\rm{SPerf}_{AP_{S}}$, and $\rm{StPerf}_{AP_{S}}$ are all stable under fibre products.
\end{proposition}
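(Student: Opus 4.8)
The plan is to reduce the statement to a stability property for tensor products of rings, using that the algebraic Frobenius is functorial (hence commutes with the formation of fibre products) together with the fact that each of the three conditions is detectable on an \'etale presentation. Since $\Phi$ is a natural endomorphism of the identity functor, $\Phi_{F\times_H G}$ is the endomorphism of $F\times_H G$ induced by $\Phi_F$, $\Phi_G$, $\Phi_H$ through the universal property. Concretely, I would choose \'etale presentations $U\to F$, $V\to G$, $W\to H$ by schemes and, after a routine refinement, assume that $U\to F\to H$ and $V\to G\to H$ factor compatibly through $W$, so that $U\times_W V\to F\times_H G$ is again an \'etale presentation; working affine-locally, $U=\mathrm{Spec}\,B$, $V=\mathrm{Spec}\,C$, $W=\mathrm{Spec}\,A$ and $U\times_W V=\mathrm{Spec}(B\otimes_A C)$, so the absolute Frobenius of $F\times_H G$ is computed by the Frobenius of the ring $B\otimes_A C$. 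By \'etale-locality of the three conditions, ``$F\times_H G$ is semiperfect (resp.\ quasi-perfect, resp.\ strongly perfect)'' becomes the corresponding ring-theoretic property of $B\otimes_A C$, while the hypotheses on $F$, $G$, $H$ furnish that property for $A$, $B$, $C$.

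It then remains to prove the ring statement: if $A$, $B$, $C$ satisfy the relevant condition, so does $B\otimes_A C$. For the semiperfect case, where the condition is surjectivity of the Frobenius, this is a one-line argument: given $z=\sum_i b_i\otimes c_i$, write $b_i=\beta_i^p$ and $c_i=\gamma_i^p$; then $z=\sum_i(\beta_i\otimes\gamma_i)^{[p]}=\bigl(\sum_i\beta_i\otimes\gamma_i\bigr)^{[p]}$, the last equality because the Frobenius of $B\otimes_A C$ is additive in characteristic $p$. For quasi-perfect and strongly perfect, whose definitions presumably fold in a finiteness or effectivity clause so that these classes lie strictly between $\mathrm{SPerf}$ and $AP_S$, the corresponding (more delicate) ring-theoretic statements are the remaining content; I expect this---together with checking that those strengthened conditions really are \'etale-local---to be the main obstacle.

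Finally, it is worth recording why no hypothesis on $H$ is needed here, in contrast with the earlier proposition on perfect algebraic spaces. Surjectivity of the Frobenius passes to $B\otimes_A C$ with no condition on $A$ whatsoever, but \emph{injectivity} of the Frobenius (equivalently, reducedness) does \emph{not}: $B\otimes_A C$ can fail to be reduced even when $B$ and $C$ are perfect rings over a reduced $A$. For the notion ``$\Phi$ is an isomorphism'' one must therefore control reducedness of the tensor product by a condition on $A$, i.e.\ on $H$; since none of the quasi-perfect, semiperfect, or strongly perfect conditions involves injectivity of the Frobenius, the fibre product causes no such difficulty and the three categories are stable under it unconditionally.
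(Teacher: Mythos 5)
There is a genuine gap: your argument is built on a different set of definitions than the ones this paper uses. Here ``semiperfect'' does \emph{not} mean that a Frobenius is surjective, and ``quasi-perfect''/``strongly perfect'' do not add a finiteness or effectivity clause to that. By Definition \ref{D1}, $F$ is semiperfect if there exist $U,V\in{\rm{Ob}}(({\rm Sch}/S)_{fppf})$ such that for all $a\in F(U)$, $b\in F(V)$ the sheaf $h_{V}\times_{a,F,b}h_{U}$ is represented by a \emph{perfect scheme}; quasi-perfect (resp.\ strongly perfect) asks this for some (resp.\ every) pair of perfect schemes $U,V$. These are conditions on the diagonal of $F$, with no surjectivity-of-Frobenius clause anywhere, so your reduction to ``the Frobenius of $B\otimes_A C$ is surjective'' proves a different statement; in particular the one-line tensor-product computation, while correct for rings with surjective Frobenius, does not bear on the proposition. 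Moreover, even granting your reading, you explicitly leave the quasi-perfect and strongly perfect cases open (``the main obstacle''), so the proposal is incomplete on its own terms; and the intermediate claims you lean on (that the algebraic Frobenius of $F\times_H G$ is induced by those of $F,G,H$, that one can always refine presentations so that $U\times_W V\to F\times_H G$ is an \'etale atlas, and that the three conditions are \'etale-local) are themselves unproved and not available in the paper in that form.

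For comparison, the paper's proof (given in \S\ref{B4} as the proof of Proposition \ref{P6}) never touches Frobenius morphisms. It first translates each property into a property of the diagonal via Theorem \ref{C2}: $F$ is quasi-perfect (resp.\ semiperfect, strongly perfect) iff $\Delta_F\colon F\to F\times F$ is a quasi-perfect (resp.\ semiperfect, weakly perfect) \emph{morphism}. Then it uses the Cartesian square exhibiting the diagonal of $F\times_H G$ as the base change of $\Delta_F\times\Delta_G$ along $(F\times F)\times_{H\times H}(G\times G)\to (F\times F)\times(G\times G)$, together with stability of these morphism classes under products (Proposition \ref{P5}) and base change (Proposition \ref{P4}). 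The only scheme-level input is Proposition \ref{A1}, that a fibre product of perfect schemes over an \emph{arbitrary} base scheme is perfect; this, rather than any remark about injectivity versus surjectivity of Frobenius, is the actual reason no hypothesis on $H$ beyond a representable diagonal is needed. If you want to salvage your approach, you would have to reformulate it so that the object being controlled is $h_U\times_{F\times_H G}h_V$ for (perfect) schemes $U,V$, which leads you back to the diagonal argument above.
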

These generalize the previous result in \cite[\S7]{Greenberg} that the category $\textrm{Perf}/\mathbb{F}_{p}$ of perfect $\mathbb{F}_{p}$-schemes is stable under fibre product.

We show that any algebraic space \'{e}tale over a perfect field is perfect, which generalizes the result in \cite[Proposition 5.19]{Bertapellea}.
\begin{proposition}
Let $X$ be an algebraic space over $S$ and let $k$ be a perfect field of characteristic $p$. If $X$ is \'{e}tale over $k$, then $X$ is perfect.
\end{proposition}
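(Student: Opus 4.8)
The plan is to reduce the statement to the corresponding fact for schemes --- that a scheme étale over a perfect field is perfect --- which is available via \cite[Proposition 5.19]{Bertapellea}. Since $X$ is étale over the field $k$ of characteristic $p$, it is in particular an algebraic space over $\mathbb{F}_p$, so its algebraic Frobenius $\Phi_X\colon X\to X$ is defined, and showing $X$ perfect amounts to showing $\Phi_X$ is an isomorphism.

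First I would fix an étale presentation $p\colon U\to X$ with $U$ a scheme. As $p$ is étale and $X\to\operatorname{Spec}k$ is étale, the composite $U\to\operatorname{Spec}k$ is étale, so $U$ is a scheme étale over the perfect field $k$ and hence perfect by \cite[Proposition 5.19]{Bertapellea}; thus $\Phi_U$ is an isomorphism. Writing $R:=U\times_XU$, the first projection $R\to U$ is étale (a base change of $p$), so $R\to\operatorname{Spec}k$ is étale as well and $R$ is again a perfect scheme, whence $\Phi_R$ is an isomorphism.

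Next I would invoke the functoriality of the absolute Frobenius of schemes: the pair $(\Phi_R,\Phi_U)$ is compatible with the two projections $R\rightrightarrows U$ and with the composition law, hence is an automorphism of the étale groupoid $(R\rightrightarrows U)$ presenting $X$. Because forming the quotient algebraic space $X=U/R$ is functorial in the presenting groupoid, this automorphism induces an automorphism of $X$; and by the naturality identity $\Phi_X\circ p=p\circ\Phi_U$ the induced morphism is exactly $\Phi_X$. Since $\Phi_U$ and $\Phi_R$ are isomorphisms, so is $\Phi_X$, and therefore $X$ is perfect.

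The point needing the most care is this last identification: the algebraic Frobenius of the algebraic space $X$ is built functor-theoretically (precomposition with $\Phi_T$ on $T$-valued points), whereas the presentation and its absolute Frobenius morphisms live in the category of schemes, so one must check that these two incarnations of Frobenius agree on a presentation, i.e.\ compatibility of $\Phi_{(-)}$ with étale-local presentations. A way to sidestep choosing a presentation is to use the relative Frobenius $F_{X/k}\colon X\to X^{(p)}$: it is an isomorphism because $X$ is étale over $k$ (check étale-locally, reducing to the analogous statement for étale morphisms of schemes), one has $X^{(p)}=X\times_{\operatorname{Spec}k,\,\Phi}\operatorname{Spec}k\cong X$ since $k$ is perfect, and $\Phi_X$ is the composite of $F_{X/k}$ with this isomorphism, hence an isomorphism.
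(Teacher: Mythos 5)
Your argument is correct in substance, but it takes a noticeably longer route than the paper. In the paper, ``perfect'' for an algebraic space is \emph{defined} (Definition~\ref{D1}(1)) by the existence of a single surjective \'etale map $h_U\to X$ with $U$ a perfect scheme; consequently the paper's entire proof is your first half-paragraph: the atlas $U$ is \'etale over $\operatorname{Spec}(k)$ by composition, hence perfect by Proposition~\ref{A13}, and then $X$ is perfect immediately by the definition. Everything you do afterwards --- checking that $R=U\times_X U$ is perfect, descending the pair $(\Phi_R,\Phi_U)$ through the groupoid presentation, or alternatively invoking the relative Frobenius $F_{X/k}$ and the identification $X^{(p)}\cong X$ --- is only needed if one insists on the Frobenius-theoretic characterization of perfectness as the definition (as in Zhu's framework); in this paper that equivalence is a separate result (Theorem~\ref{C5} and its Corollary), and your relative-Frobenius variant parallels Proposition~\ref{P1}. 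The compatibility issue you rightly flag, namely that the functorially defined Frobenius of $X$ agrees with the one induced from an \'etale presentation, is exactly what the paper builds into its construction: the algebraic Frobenius $\Psi_X$ is defined (Proposition~\ref{A11}) precisely by the identity $\Psi_X\circ p=p\circ h(\Phi_U)$ that you use, so in the paper's setting that identification is automatic. In short: your proof buys the extra statement that the Frobenius of $X$ is an isomorphism, at the cost of machinery the statement itself does not require under the paper's definition.
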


More importantly, even our definitions of perfect algebraic spaces are completely separated from Frobenius morphisms, we recover a notion of Frobenius morphisms for algebraic spaces. Such a Frobenius morphism is called the \textit{algebraic Frobenius morphism}. The algebraic Frobenius morphisms satisfy some analogical properties of the absolute Frobenius morphisms of schemes. However, the following theorem is one of the most important results.
\begin{theorem}
Let $F$ be an algebraic space of characteristic $p$ over $S$. Then $F$ is perfect if and only if the algebraic Frobenius morphism $\Psi_{F}:F\rightarrow F$ of $F$ is an isomorphism.
\end{theorem}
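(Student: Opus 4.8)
The plan is to reduce this statement to the corresponding (already established) characterization of perfect schemes via the absolute Frobenius, using the fact that an algebraic space $F$ admits an étale presentation $U \to F$ by a scheme, and that the algebraic Frobenius morphism $\Psi_F$ should by construction be compatible with the absolute Frobenius $\Phi_U$ on such a presentation. First I would unwind the definition of $\Psi_F$ and record the key compatibility: for any étale $U \to F$ with $U$ a scheme of characteristic $p$, there is a commutative square relating $\Phi_U : U \to U$, $\Psi_F : F \to F$, and the structure map $U \to F$; moreover, because $\Phi$ is a natural transformation (the absolute Frobenius is functorial and commutes with all morphisms in characteristic $p$), the square is in fact Cartesian, so $U \to F$ pulls back $\Psi_F$ to $\Phi_U$. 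This is the technical heart and I expect it to be the main obstacle: one must check that the formation of $\Psi_F$ genuinely descends along the étale equivalence relation $R \rightrightarrows U$ presenting $F$, i.e. that the two induced maps $R \to R$ agree and glue to give $\Psi_F$, and that no choices were made that obstruct the Cartesian claim.

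Granting that compatibility, the forward direction is immediate: if $F$ is perfect, then (by the earlier results upgrading perfectness of schemes to algebraic spaces, and the fact that perfectness is an étale-local condition) any étale presentation $U \to F$ has $U$ a perfect scheme, so $\Phi_U$ is an isomorphism; since $\Psi_F$ pulls back to $\Phi_U$ along the étale surjection $U \to F$, and "being an isomorphism" is fppf-local (in particular étale-local) on the target, $\Psi_F$ is an isomorphism. For the converse, suppose $\Psi_F$ is an isomorphism. Pick an étale presentation $U \to F$ by a scheme of characteristic $p$; pulling back the isomorphism $\Psi_F$ along $U \to F$ and using the Cartesian square identifies $\Phi_U$ with the base change of an isomorphism, hence $\Phi_U : U \to U$ is an isomorphism, so $U$ is a perfect scheme. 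Then by the definition of perfect algebraic space (perfectness being detected on an étale presentation, as set up earlier in the paper), $F$ is perfect.

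The remaining routine points I would spell out are: (i) that one may choose the presentation $U \to F$ with $U$ of characteristic $p$, which follows since $F$ is of characteristic $p$ over $S$ and this passes to any scheme étale over $F$; (ii) that "$\Psi_F$ is an isomorphism" and "$F$ is perfect" are both stable under the base change / descent along $U \to F$ invoked above — this is where I would cite the descent lemmas and the full-embedding statements recalled in the introduction; and (iii) a sanity check that when $F$ is itself a scheme, $\Psi_F$ recovers the absolute Frobenius $\Phi_F$, so that the theorem specializes correctly to the known scheme case. The only genuinely delicate step is the Cartesian-square compatibility of $\Psi_F$ with $\Phi_U$; everything else is bookkeeping with étale-local properties.
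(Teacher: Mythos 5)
Your strategy differs from the paper's: you want the square relating $\Phi_U$, $\Psi_F$ and the presentation $f\colon h_U\to F$ to be \emph{Cartesian}, so that $\Phi_U$ becomes the base change of $\Psi_F$ along a surjective \'etale map, and then you move the property ``isomorphism'' back and forth by descent. The paper never uses Cartesianness: its Theorem~\ref{C5} transfers isomorphy directly across the merely commutative square $\Psi_F\circ f=f\circ h(\Phi_U)$ by a stalkwise argument on the big fppf site (Lemma~\ref{A14}, using that $f$ is surjective as a map of sheaves and that $h$ is fully faithful), and the corollary then follows from the definition of perfectness. The gap in your version is the justification of the key step: naturality of the Frobenius (``$\Phi$ commutes with all morphisms in characteristic $p$'') gives only the \emph{commutativity} of the square, never its Cartesianness. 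The square is Cartesian precisely because $h_U\to F$ is \emph{\'etale}, i.e.\ because the relative (algebraic) Frobenius of an \'etale morphism is an isomorphism --- this is the content of the paper's Proposition~\ref{P1}, which rests on representability of $\Psi_F$ (Proposition~\ref{A32}) and the cited Lemma~A.2 of Zhu --- and for a non-\'etale cover in characteristic $p$ the square commutes but is not Cartesian, so your descent argument would collapse. Checking that ``no choices obstruct the Cartesian claim'' or that $\Psi_F$ glues along $R\rightrightarrows U$ does not supply this input; without invoking something like Proposition~\ref{P1} the technical heart of your argument is missing, not merely unverified.

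A second, smaller gap is in your forward direction: you assert that \emph{every} \'etale presentation $U\to F$ of a perfect algebraic space has $U$ perfect (``perfectness is \'etale-local''). Definition~\ref{D1} only provides the existence of one perfect atlas, and the paper proves nothing like ascent of perfectness along arbitrary \'etale morphisms of schemes over $F$ or descent along \'etale surjections of schemes (Proposition~\ref{A13} only treats schemes \'etale over a perfect \emph{field}); this claim is true but would need its own proof. It is also unnecessary: since $\Psi_F$ depends on the chosen atlas, the statement really asserts that \emph{some} algebraic Frobenius is an isomorphism (this is how the paper phrases its corollary, and its convention is that for a perfect $F$ one takes $\Psi_F$ with respect to a perfect atlas), so in the forward direction you should simply work with the perfect atlas furnished by the definition rather than with an arbitrary presentation.
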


By observing that the algebraic Frobenius morphism is representable, we introduce four types of perfect morphisms of algebraic spaces, called \textit{perfect}, \textit{quasi-perfect}, \textit{semiperfect}, and \textit{weakly perfect morphisms}. Let $\textrm{Hom}_{AP_{S}}^{P}(x,y)$, $\textrm{Hom}_{AP_{S}}^{Q}(x,y)$, $\textrm{Hom}_{AP_{S}}^{S}(x,y)$, and $\textrm{Hom}_{AP_{S}}^{W}(x,y)$ be the set of these perfect morphisms from $x$ to $y$, respectively. Then we have the following inclusions
\begin{align}
\textrm{Hom}_{AP_{S}}^{P}(x,y)\subset\textrm{Hom}_{AP_{S}}^{W}(x,y)\subset \textrm{Hom}_{AP_{S}}^{Q}(x,y)\subset \textrm{Hom}_{AP_{S}}^{S}(x,y).
\end{align}

Surprisingly, perfect morphisms are all stable under compositions and arbitrary base change.
\begin{proposition}[Composition]\label{T3}
Let $f:F\rightarrow G$ and $g:G\rightarrow H$ be morphisms of algebraic spaces over $S$.
\begin{itemize}
\item[(1)]
If $f,g$ are perfect, then the composition $g\circ f:F\rightarrow H$ is also perfect.
\item[(2)]
If $f,g$ are quasi-perfect, then the composition $g\circ f:F\rightarrow H$ is also quasi-perfect.
\item[(3)]
If $f,g$ are semiperfect, then the composition $g\circ f:F\rightarrow H$ is also semiperfect.
\item[(4)]
If $f,g$ are weakly perfect, then the composition $g\circ f:F\rightarrow H$ is also weakly perfect.
\end{itemize}
\end{proposition}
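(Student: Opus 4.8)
The plan is to handle all four parts in one stroke by reducing them to the compatibility of the relative algebraic Frobenius morphism with composition. Recall that for a morphism $h\colon A\to B$ of algebraic spaces of characteristic $p$ over $S$, functoriality of the algebraic Frobenius gives a commutative square
\[
\begin{tikzcd}
A \ar[r, "\Psi_A"] \ar[d, "h"'] & A \ar[d, "h"] \\
B \ar[r, "\Psi_B"'] & B
\end{tikzcd}
\]
and hence a relative algebraic Frobenius morphism $\Psi_{A/B}\colon A\to A\times_{B,\Psi_B}B$, which is again representable. By construction, each of the four notions of perfect morphism is encoded as a property $\mathsf P$ of the square above, equivalently of $\Psi_{A/B}$: being an isomorphism in the perfect case, and successively weaker properties of representable morphisms (together with the auxiliary hypotheses in the definitions of the weakly perfect, quasi-perfect, and semiperfect cases).

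Next I would record the factorization of the relative Frobenius of a composite. Given $f\colon F\to G$ and $g\colon G\to H$, stacking the two functoriality squares
\[
\begin{tikzcd}
F \ar[r, "\Psi_F"] \ar[d, "f"'] & F \ar[d, "f"] \\
G \ar[r, "\Psi_G"] \ar[d, "g"'] & G \ar[d, "g"] \\
H \ar[r, "\Psi_H"'] & H
\end{tikzcd}
\]
exhibits the outer rectangle as the functoriality square for $g\circ f$. Together with the canonical identification $F\times_{H,\Psi_H}H\cong F\times_{G}\bigl(G\times_{H,\Psi_H}H\bigr)$ (which holds since fibre products of algebraic spaces exist), this produces a canonical factorization
\[
\Psi_{F/H}\;=\;\beta\circ\Psi_{F/G},
\]
where $\beta\colon F\times_{G,\Psi_G}G\to F\times_{H,\Psi_H}H$ is a base change of $\Psi_{G/H}$. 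This is nothing but the classical compatibility of relative Frobenius with composition, transported to algebraic spaces via the representability of $\Psi$.

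Granting this, each part is formal. For (1): $\Psi_{F/G}$ and $\Psi_{G/H}$ are isomorphisms, so $\beta$ is a base change of an isomorphism, hence an isomorphism, and therefore $\Psi_{F/H}$ is an isomorphism; thus $g\circ f$ is perfect. For (2)--(4): the property $\mathsf P$ that defines the class in question is, for purely formal reasons, stable under base change and under composition of representable morphisms --- in each case a routine instance of the stability of the ambient class of maps (surjections, epimorphisms of sheaves, and so on) together with its auxiliary condition; hence $\beta$ inherits $\mathsf P$ from $\Psi_{G/H}$, and then $\Psi_{F/H}=\beta\circ\Psi_{F/G}$ has $\mathsf P$ as a composite of two morphisms that have it. Therefore $g\circ f$ lies in the same class as $f$ and $g$ in each case.

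The step I expect to be the actual obstacle is the base-change stability of $\mathsf P$ for the three weaker notions. Unlike ``isomorphism'', these properties (surjectivity, epimorphism of sheaves, and the auxiliary conditions built into the definitions of the quasi-perfect and weakly perfect classes) must be checked to survive the base change $\beta$ along $\Psi_G$, and this must be done at the level of sheaves of sets, without choosing \'{e}tale atlases in a way that breaks the comparison between the twists $F\times_{G,\Psi_G}G$ and $F\times_{H,\Psi_H}H$. I would therefore isolate a preliminary lemma --- base-change stability for the relative algebraic Frobenius within each of the four classes --- and prove it by \'{e}tale descent along an atlas of $G$, reducing to the theory of perfect morphisms of schemes developed earlier in the paper; with that lemma in hand, the four composition statements are exactly the formal argument above.
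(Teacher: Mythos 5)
Your proposal rests on a mischaracterization of the definitions, so it does not prove the statement as the paper defines it. In this paper a morphism $f:F\rightarrow G$ is \emph{perfect} (resp.\ quasi-perfect, semiperfect, weakly perfect) when the pullback $h_{U}\times_{G}F$ is representable by a \emph{perfect scheme} for every $U$ (resp.\ for some perfect $U$, for some $U$, for every perfect $U$); none of these classes is defined, or anywhere shown to be equivalent to, a property of the algebraic Frobenius or of a relative algebraic Frobenius $\Psi_{F/G}$. Your key premise --- ``each of the four notions of perfect morphism is encoded as a property $\mathsf P$ of the functoriality square, equivalently of $\Psi_{A/B}$: being an isomorphism in the perfect case, and successively weaker properties\dots'' --- is therefore unsupported, and for the three weaker classes you never even say what $\mathsf P$ is, deferring exactly the substantive content (base-change and composition stability of these unnamed properties) to a lemma you do not prove. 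The mismatch is not merely cosmetic: a perfect morphism in the paper's sense is a representability-by-perfect-schemes condition tested against schemes over the target, not an intrinsic Frobenius condition on $f$; moreover the (relative) algebraic Frobenius is only defined for algebraic spaces of characteristic $p$ and depends on a choice of \'{e}tale atlas, while Proposition~\ref{T3} carries no such hypotheses. So even if your factorization $\Psi_{F/H}=\beta\circ\Psi_{F/G}$ were established for algebraic spaces, it would prove a statement about a different class of morphisms (those with invertible relative Frobenius), not the one at hand.

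The actual argument is a direct fibre-product computation from the definitions, as in the paper's proof of Proposition~\ref{A24}: given a test scheme $U$ with $\xi\in H(U)$, use the hypothesis on $g$ to write $h_{U}\times_{H}G\simeq h_{W'}$ with $W'$ perfect, then use the hypothesis on $f$ (applied to a test scheme mapping to $G$) together with the isomorphism $h_{U}\times_{H}F\simeq (h_{U}\times_{H}G)\times_{G}F$ to represent $h_{U}\times_{H}F$ by a perfect scheme; the only nontrivial input is Proposition~\ref{A1}, that a fibre product of perfect schemes over an arbitrary base is again perfect, plus some bookkeeping about which test schemes $U$ one is allowed to use in each of the four variants. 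If you want to salvage your approach, you would first have to prove an equivalence between the paper's perfect morphisms and a Frobenius-theoretic condition, which is not available in the paper and is not something your sketch supplies.
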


\begin{proposition}[Base change]
Let $F,G,H$ be algebraic spaces over $S$. Consider the following fibre product diagram
$$
\xymatrix{
  F\times_{H}G \ar[d]_{a'} \ar[r]^{\ \ b'} & F \ar[d]^{a} \\
  G \ar[r]^{b} & H   }
$$
If $a$ is perfect (resp. quasi-perfect, semiperfect, weakly perfect), then the base change $a'$ is also perfect (resp. quasi-perfect, semiperfect, weakly perfect).
\end{proposition}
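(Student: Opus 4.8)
The plan is to reduce each of the four cases to the compatibility of the relative algebraic Frobenius morphism with base change, and then to invoke the fact that the defining conditions of the four types of perfect morphisms are themselves stable under base change. Recall from the discussion preceding the statement that, for a morphism $f\colon F\to G$ of algebraic spaces over $S$, the algebraic Frobenius morphism $\Psi$ is representable and functorial, and that $f$ being perfect (resp.\ weakly perfect, quasi-perfect, semiperfect) is encoded as a property of the relative algebraic Frobenius morphism
$\Psi_{F/G}\colon F\to F\times_{G,\Psi_G}G$, namely that $\Psi_{F/G}$ be an isomorphism (resp.\ be representable with the corresponding additional property). Writing $F'\colon=F\times_H G$, and noting that $a'$ is the second projection, the assertion amounts to showing that $\Psi_{F'/G}$ has the same property that $\Psi_{F/H}$ does; and since ``isomorphism'', ``representable'', and the auxiliary conditions cutting out the weak/quasi/semi variants are all stable under arbitrary base change of morphisms of algebraic spaces, it suffices to identify $\Psi_{F'/G}$ with the base change of $\Psi_{F/H}$ along $b$.

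To carry this out I would first apply functoriality of $\Psi$ to the map $b\colon G\to H$: together with the identities this gives a commuting square relating $\Psi_G$ and $\Psi_H$, hence a canonical morphism $G\times_{G,\Psi_G}G\to G\times_{H,\Psi_H}H$ compatible with the two copies of $G$ and $H$. Combining this with the defining Cartesian square for $F'$ and the pasting lemma for fibre products of algebraic spaces, one obtains a canonical identification
\[
F'\times_{G,\Psi_G}G\;\cong\;\bigl(F\times_{H,\Psi_H}H\bigr)\times_{H}G,
\]
under which $\Psi_{F'/G}$ is carried to $\Psi_{F/H}\times_{H}\mathrm{id}_G$, i.e.\ to the base change of $\Psi_{F/H}$ along $b$. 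Granting this, if $a$ is perfect then $\Psi_{F/H}$ is an isomorphism, hence so is its base change $\Psi_{F'/G}$, so $a'$ is perfect; running the same argument with ``isomorphism'' replaced by the representability condition and the additional property attached to each of the remaining three notions yields the weakly, quasi-, and semiperfect cases verbatim.

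The main obstacle is the bookkeeping in the displayed identification: one must verify that all structure maps match, using only the already-established naturality of the algebraic Frobenius morphism and the universal properties of the several nested fibre products, and in particular that the formation $F\mapsto F\times_{G,\Psi_G}G$ together with the morphism $\Psi_{F/G}$ genuinely commutes with pullback along $b$, rather than merely up to a noncanonical isomorphism. Once this compatibility is pinned down, all four assertions follow uniformly. (Alternatively, if one prefers to test perfectness on scheme-valued points, the statement is immediate: after any base change by a scheme, a perfect, weakly perfect, quasi-perfect, or semiperfect morphism of algebraic spaces restricts to a morphism of schemes of the same type, base change of morphisms of schemes is transitive, and so the property is inherited by $a'$; the content then reduces entirely to the classical scheme case.)
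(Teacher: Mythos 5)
Your main argument rests on a characterization that the paper neither states nor proves, and which is in fact false under its definitions. In this paper a morphism $f\colon F\to G$ of functors is \emph{perfect} (resp.\ quasi-perfect, semiperfect, weakly perfect) when for every scheme $U$ (resp.\ for some perfect scheme $U$, for some scheme $U$, for every perfect scheme $U$) and every $\xi\in G(U)$ the fibre product $h_{U}\times_{G}F$ is represented by a \emph{perfect scheme}; nothing in these definitions involves the algebraic Frobenius, and they are imposed with no characteristic-$p$ hypothesis, so the relative algebraic Frobenius $\Psi_{F/G}$ you want to test need not even be defined. Even in characteristic $p$ the equivalence you assume fails: the identity of a non-perfect characteristic-$p$ scheme has an isomorphism for its relative Frobenius over itself, yet it is not a perfect morphism in the paper's sense, since $h_{U}\times_{F}F\simeq h_{U}$ for arbitrary (non-perfect) $U$ over $F$. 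Consequently the reduction of the proposition to the identification of $\Psi_{F'/G}$ with the base change of $\Psi_{F/H}$ along $b$, however carefully the bookkeeping is done, does not prove the statement as it is defined.

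The argument the paper actually gives is a direct transitivity computation on scheme-valued points: for $U\in\ObSchS$ and $\xi'\in G(U)$ one has
$$
h_{U}\times_{\xi',G}(F\times_{H}G)\;\simeq\;h_{U}\times_{b\circ\xi',H}F,
$$
and the right-hand side is represented by a perfect scheme by the hypothesis on $a$ --- taking $U$ arbitrary for the perfect case, $U$ perfect for the weakly perfect case, and, for the quasi-perfect and semiperfect cases, the specific $U$ furnished by the definition, which serves for all $\xi'\in G(U)$ because $b\circ\xi'\in H(U)$. Your closing parenthetical gestures at exactly this, but as written it is too vague to stand on its own: ``restricts to a morphism of schemes of the same type'' and ``base change of morphisms of schemes is transitive'' need to be replaced by the displayed isomorphism together with an explicit check of the quantifiers in each of the four cases; once that is done, your alternative collapses to the paper's short proof, and the Frobenius machinery in your main route can be discarded.
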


More importantly, there are some equivalences between perfect algebraic spaces and perfect morphisms, which provide equivalent definitions for perfect algebraic spaces.
\begin{theorem}
Let $F$ be an algebraic space over $S$. Then
\begin{itemize}
\item[(1)]
$F$ is quasi-perfect if and only if there exists a perfect scheme $U\in{\rm{Ob}}(({\rm Sch}/S)_{fppf})$ such that every map $\varphi_{F}:h_{U}\rightarrow F$ is a quasi-perfect morphism of algebraic spaces over $S$. Equivalently, the diagonal $\Delta:F\rightarrow F\times F$ is quasi-perfect.
\item[(2)]
$F$ is semiperfect if and only if there exists $U\in{\rm{Ob}}(({\rm Sch}/S)_{fppf})$ such that every map $\varphi_{F}:h_{U}\rightarrow F$ is a semiperfect morphism of algebraic spaces over $S$. Equivalently, the diagonal $\Delta:F\rightarrow F\times F$ is semiperfect.
\item[(3)]
$F$ is strongly perfect if and only if for every perfect scheme $U\in{\rm{Ob}}(({\rm Sch}/S)_{fppf})$, the maps $\varphi_{F}:h_{U}\rightarrow F$ are weakly perfect. Equivalently, the diagonal $\Delta:F\rightarrow F\times F$ is weakly perfect.
\end{itemize}
\end{theorem}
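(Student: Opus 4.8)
The plan is to prove all three items by a single scheme, establishing for each flavour the chain of equivalences ``$F$ is $[X]$-perfect'' $\iff$ ``the relevant maps $\varphi_F\colon h_U\to F$ are $[x]$-morphisms'' $\iff$ ``$\Delta\colon F\to F\times F$ is an $[x]$-morphism'', where $[X]$ runs through quasi-perfect, semiperfect, strongly perfect and $[x]$ through quasi-perfect, semiperfect, weakly perfect correspondingly. The backbone is the remark that for a scheme $U\in\Ob(\SchS)$ and a morphism $\varphi_F\colon h_U\to F$---automatically representable, since its source is a scheme---the graph $\Gamma_{\varphi_F}\colon h_U\to h_U\times_S F$ sits in a cartesian square with $\Delta$ along $\varphi_F\times\mathrm{id}\colon h_U\times_S F\to F\times F$, so $\Gamma_{\varphi_F}$ is a base change of $\Delta$. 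By the Base change Proposition, $\Gamma_{\varphi_F}$ inherits from $\Delta$ whichever of the three properties $\Delta$ has.

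First I would settle the equivalence between the diagonal condition and the condition on the maps $\varphi_F$. For the implication ``$\Delta$ is $[x]$ $\Rightarrow$ the $\varphi_F$ are $[x]$'', factor $\varphi_F=\mathrm{pr}_F\circ\Gamma_{\varphi_F}$ with $\mathrm{pr}_F\colon h_U\times_S F\to F$ the projection, which is a base change of the structure morphism $U\to S$; then Proposition~\ref{T3} together with the fact that $U\to S$ is $[x]$ whenever $U$ is a perfect scheme gives the claim. This is exactly where the hypotheses on $U$ differ: for (1) and (3) one needs $U$ perfect so that $\mathrm{pr}_F$ is quasi-perfect, resp. weakly perfect, whereas for (2) any $U$ works because structure morphisms are always semiperfect---which also explains why (3) is phrased for \emph{every} perfect scheme (the conclusion being automatic once it holds for a single one) and (2) for an arbitrary $U\in\Ob(\SchS)$. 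For the converse I would fix an étale presentation $h_V\to F$ by a scheme $V$, chosen perfect when $F$ is quasi-perfect or strongly perfect (using the intrinsic description of these algebraic spaces), and observe that the pullback of $\Delta$ along the étale cover $h_V\times_S h_V\to F\times F$ is the canonical monomorphism attached to the étale equivalence relation presenting $F$; since the three properties of morphisms are étale-local on the target, checking them on this pullback reduces, after covering by schemes, to the hypothesis applied to maps of the form $h_U\to F$.

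It then remains to match the diagonal condition with the intrinsic $[X]$-perfectness of $F$, and this I expect to be the main obstacle. The strategy is to test both conditions on a presentation $h_V\to F$: the intrinsic notion unwinds to a condition on $V$ together with the equivalence relation $V\times_F V\rightrightarrows V$, the diagonal condition unwinds (by the previous step) to a condition on the associated monomorphism, and one compares the two through the algebraic Frobenius morphism $\Psi_F$, using the Frobenius theorem as a template in the two cases where perfect schemes are available. The genuine difficulty is the semiperfect case: no perfect scheme is in sight, the Frobenius characterisation cannot be invoked, and one must instead argue directly with the sheaf-theoretic (epimorphism/monomorphism) content of the definitions of semiperfect algebraic space and semiperfect morphism, verifying that these remain compatible after passage to an étale presentation and to the diagonal. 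Reconciling these two ``weakest'' notions is the crux.
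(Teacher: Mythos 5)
Your proposal has a genuine gap, and it sits exactly where you yourself flag ``the main obstacle'': the comparison between the intrinsic notions of Definition~\ref{D1} and the atlas/diagonal conditions is never actually carried out, and the route you sketch for it is not the right tool. In the paper this step is essentially a direct unwinding of definitions: a map $\varphi_{F}\colon h_{U}\rightarrow F$ being quasi-perfect (resp.\ semiperfect, weakly perfect) literally means that suitable functors $h_{V}\times_{a,F,\varphi_{F}}h_{U}$ are represented by perfect schemes, which is the very condition defining quasi-perfect (resp.\ semiperfect, strongly perfect) algebraic spaces; and the diagonal statement follows from the standard cartesian identity $h_{U\times V}\times_{c,F\times F,\Delta}F\cong h_{U}\times_{a,F,b}h_{V}$ together with Proposition~\ref{A1}, which guarantees that $U\times V$ and auxiliary fibre products such as $W\times_{U\times V}U$ are again perfect. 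No \'{e}tale presentation and no Frobenius enters. Your plan instead routes this step through (i) the claim that the three morphism properties are \'{e}tale-local on the target, which is established nowhere in the paper and would itself require a nontrivial descent argument for ``represented by a perfect scheme'', and (ii) the algebraic Frobenius $\Psi_{F}$ of Theorem~\ref{C5}, which characterizes \emph{perfect} algebraic spaces (existence of an \'{e}tale atlas by a perfect scheme) and has no bearing on the quasi-/semi-/strongly perfect notions appearing in this theorem; for the semiperfect case you explicitly leave the argument open. As written, the theorem is therefore not proved.

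A secondary problem is your justification of the implication ``$\Delta$ is $[x]$ $\Rightarrow$ the $\varphi_{F}$ are $[x]$'' via $\varphi_{F}=\mathrm{pr}_{F}\circ\Gamma_{\varphi_{F}}$: the parenthetical claim that structure morphisms are always semiperfect is false under the intended (nonempty) reading of the definitions. For instance $\mathbb{A}^{1}_{\mathbb{F}_{p}}\rightarrow\mathrm{Spec}(\mathbb{F}_{p})$ is not semiperfect, since for any nonempty $T$ the product $T\times_{\mathbb{F}_{p}}\mathbb{A}^{1}_{\mathbb{F}_{p}}$ contains an affine open $\mathrm{Spec}(A[t])$ with $A\neq 0$, and $A[t]$ is never perfect, so the product cannot be perfect by Proposition~\ref{PP1}. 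The instances you actually need can be salvaged by choosing $U$ perfect (then $h_{U}\rightarrow h_{S}$ is quasi-perfect and weakly perfect by Proposition~\ref{A1}), and the graph/base-change/composition mechanism via Propositions~\ref{A24} and~\ref{P4} is a legitimate alternative to the paper's direct computation for that one implication; but it cannot substitute for the missing core equivalence described above.
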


Proposition \ref{T3} produces 12 categories of perfect algebraic spaces and perfect morphisms and 8 semicategories of perfect algebraic spaces and perfect morphisms. We construct three big commutative diagrams to characterize these categories and semicategories, see Figure \ref{F1}, Figure \ref{F2} and Figure \ref{F3}.

The subject on the perfection of algebraic spaces will be given in a second paper \cite{Liang}, where we define perfection of a given algebraic space via algebraic Frobenius. This enables us to pass between the usual and the perfect world.

\subsection{Outline}
We organize our paper as follows. First, we begin in \S\ref{BB1} by reviewing some commutative algebra surrounding perfect rings. Lemma \ref{C} will be utilized to prove Proposition \ref{A1} in \S\ref{B1}, while Lemma \ref{CC1} will be used to prove Proposition \ref{PP1} in \S\ref{B1}. Next, in \S\ref{B1}, we study perfect schemes and their perfections, and record some important results that will be useful in the sequel. Note that Proposition \ref{A1} is essential to deduce the results concerning fibre products in \S\ref{B2}.

All the material is put to use in \S\ref{B2}, and \S\ref{B2} is the starting point of the theory, where we give the definitions of four types of perfect algebraic spaces. In \S\ref{B3}, we first formalize the notion of characteristic of a given algebraic space. Then we deduce the algebraic Frobenius morphism and give its properties. Next, in \S\ref{B4}, we lay out the definitions of four types of perfect morphisms and study their relationships with perfect algebraic spaces in \S\ref{B2}. Then in \S\ref{B5}, we construct several categories and semicategories spanned by perfect algebraic spaces and perfect morphisms. Finally, in \S\ref{B6}, we briefly discuss the notion of perfect groupoids in algebraic spaces.

\subsection{Notations and conventions}
\begin{itemize}
  \item All rings are tacitly commutative with identity. $p,q$ are prime numbers and $\N=\{0,1,2,\dots\}$.
\end{itemize}
Meanwhile, we will follow some terminologies and notations in \cite{StackProject} as follows:
\begin{itemize}
  \item ${\rm Sch}$, the big category of schemes; ${\rm Sch}_{fppf}$, the big fppf site (see \cite[Tag021R]{StackProject}); and $\bf Sets$ the big category of sets.
  \item $S$ will always be a base scheme in a big fppf site ${\rm Sch}_{fppf}$. Then $({\rm Sch}/S)_{fppf}$ will denote the big fppf site of $S$ (see \cite[Tag021S]{StackProject}).
  \item Without explicitly mentioned, all schemes will be in ${\rm (Sch/S)}_{fppf}$.
\end{itemize}

Some foundational material for algebraic spaces can be found in \cite{StackProject,Art69c,Knutson}. We will stick with the definition of algebraic spaces in \cite[Tag025Y]{StackProject}.

By an \textit{algebraic space} over $S$, we mean a fppf sheaf $F$ on $({\rm Sch}/S)_{fppf}$ satisfying the usual axioms: its diagonal is representable, and it admits an \'{e}tale cover $h_{U}\rightarrow F$ for a scheme $U\in\textrm{Ob}(({\rm Sch}/S)_{fppf})$.

\section{Review of perfect rings and perfect schemes}\label{B}
In this section, we review some basic definitions and notions of perfect rings and perfect schemes. Meanwhile, we will state some important results that will be useful in the following sections. The material of this section can be found in \cite{Bertapellea, Greenberg, SGA3}.

\subsection{Perfect rings}\ \label{BB1}

Let $A$ be a ring of characteristic $p$. Recall that $A$ is \textit{perfect} if the Frobenius endomorphism $F_{A}:A\rightarrow A,\ a\mapsto a^{p}$ is an isomorphism.

\begin{lemma}\label{L3}
Let $f:A\rightarrow B$ be a homomorphism of rings. If $A$ has characteristic $p$ and $f$ is injective, then $B$ also has characteristic $p$.
\end{lemma}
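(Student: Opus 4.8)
The plan is to unwind the definition of characteristic in terms of the unique ring homomorphism from $\Z$. Recall that the characteristic of a ring $R$ is the non-negative generator of the kernel of the canonical map $\iota_R\colon \Z\to R$ sending $1\mapsto 1_R$. So I would begin by writing down the commutative square relating $\iota_A$, $\iota_B$, and $f$: since $f$ is a ring homomorphism it sends $1_A\mapsto 1_B$, hence $f\circ\iota_A=\iota_B$ by uniqueness of the map out of $\Z$. This reduces the statement to a purely arithmetic comparison of kernels.

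Next I would compute both kernels. Saying $A$ has characteristic $p$ means $\ker\iota_A=p\Z$; I want to show $\ker\iota_B=p\Z$ as well. For the inclusion $p\Z\subseteq\ker\iota_B$: for any $n\in p\Z$ we have $\iota_A(n)=0$, so $\iota_B(n)=f(\iota_A(n))=f(0)=0$. For the reverse inclusion $\ker\iota_B\subseteq p\Z$: if $\iota_B(n)=0$ then $f(\iota_A(n))=0$, and since $f$ is injective this forces $\iota_A(n)=0$, i.e. $n\in\ker\iota_A=p\Z$. Hence $\ker\iota_B=p\Z$, which is exactly the statement that $B$ has characteristic $p$.

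The only point requiring a moment's care is observing that a ring of characteristic $p$ is automatically nonzero (since $p\neq 0$, the element $1_A\neq 0_A$), so that $\iota_A$ is genuinely non-injective with kernel $p\Z$ rather than $\{0\}$; and dually that $B$ is nonzero, which follows because $f$ injective forces $1_B=f(1_A)\neq f(0_A)=0_B$. I do not anticipate any real obstacle here — the statement is essentially formal, the content being just that injective maps reflect the kernel of the structure map from $\Z$. One could alternatively phrase the whole argument without reference to $\Z$: $p\cdot 1_B = p\cdot f(1_A) = f(p\cdot 1_A) = f(0) = 0$ gives that the characteristic of $B$ divides $p$, hence is $1$ or $p$; it cannot be $1$ since $B\neq 0$; so it is $p$. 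I would likely present this shorter version.
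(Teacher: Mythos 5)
Your proof is correct, and the short version you say you would present (namely $p\cdot 1_B=f(p\cdot 1_A)=0$ plus injectivity of $f$ ruling out a smaller characteristic) is essentially the paper's own argument; the longer version via kernels of the structure map $\Z\to R$ is just a formal rephrasing of the same content. No gaps.
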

\begin{proof}
Since $A$ has characteristic $p$, we have $f(p)=p=0$. Assume that $B$ has characteristic $p'<p$. Then we have $p'=f(p')=0$, which implies that $p'=0$ in $A$. This is a contradiction. Hence, $B$ has characteristic $p$ by the injective $f$.
\end{proof}

For the localizations at prime ideals of a perfect ring, we have the following lemma.
\begin{lemma}\label{C}
Let $A$ be a ring of characteristic $p$ and let $\mathfrak{p}\subset A$ be a prime ideal of $A$. If $A$ is perfect, then $A_{\mathfrak{p}}$ is also a perfect ring of characteristic $p$ and the Frobenius $\phi:A_{\mathfrak{p}}\rightarrow A_{\mathfrak{p}},\ x\mapsto x^{p}$ is an isomorphism.
\end{lemma}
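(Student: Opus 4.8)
The plan is to establish the two assertions in turn: first that $\mathrm{char}(A_{\mathfrak{p}})=p$, and then that the Frobenius endomorphism $\phi$ of $A_{\mathfrak{p}}$ is bijective. For the characteristic, note that since $\mathfrak{p}$ is a proper ideal, $A_{\mathfrak{p}}$ is a nonzero ring, and the canonical map $A\to A_{\mathfrak{p}}$ sends $p=0$ to $0$; hence $p\cdot 1=0$ in $A_{\mathfrak{p}}$, and since $A_{\mathfrak{p}}\neq 0$ this forces $\mathrm{char}(A_{\mathfrak{p}})=p$ (the characteristic divides $p$ and is not $1$). Equivalently, $A$ is an $\mathbb{F}_{p}$-algebra, hence so is $A_{\mathfrak{p}}$. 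I would remark that Lemma \ref{L3} does not apply directly here, because the localization map need not be injective.

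The key observation for the second part is that the multiplicative set $S:=A\setminus\mathfrak{p}$ is stable under the Frobenius bijection $F_{A}$ and under its inverse. Indeed, for any prime ideal $\mathfrak{p}$ and any $a\in A$ one has $a\in\mathfrak{p}$ if and only if $a^{p}\in\mathfrak{p}$, the nontrivial implication being immediate from primeness. Applying this with $a=s$ gives $F_{A}(S)\subseteq S$; applying it with $a=F_{A}^{-1}(s)$ gives $F_{A}^{-1}(S)\subseteq S$. Therefore $F_{A}$ restricts to a bijection $S\to S$; in particular every $s\in S$ has a unique $p$-th root $s^{1/p}:=F_{A}^{-1}(s)$, which again lies in $S$.

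With this in hand I would exhibit $\phi$ as, essentially, the localization of $F_{A}$. The composite $A\xrightarrow{F_{A}}A\to A_{\mathfrak{p}}$ carries each $s\in S$ to $s^{p}$, which is a unit in $A_{\mathfrak{p}}$ by the previous paragraph; so by the universal property of localization it factors uniquely through $A\to A_{\mathfrak{p}}$, producing a ring map $A_{\mathfrak{p}}\to A_{\mathfrak{p}}$ given on representatives by $a/s\mapsto a^{p}/s^{p}$, which is exactly $\phi$. Running the same argument with $F_{A}^{-1}$ in place of $F_{A}$ yields a ring map $\psi:A_{\mathfrak{p}}\to A_{\mathfrak{p}}$ given by $a/s\mapsto a^{1/p}/s^{1/p}$. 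By the uniqueness clause of the universal property (or by a one-line check on representatives) $\psi\circ\phi=\mathrm{id}$ and $\phi\circ\psi=\mathrm{id}$, so $\phi$ is an isomorphism and $A_{\mathfrak{p}}$ is perfect.

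There is no deep difficulty here, but the one point that must be handled carefully is the role of the primeness of $\mathfrak{p}$: it is precisely what makes $S$ closed under $p$-th roots, hence what allows the inverse Frobenius to descend to the localization, and for a general multiplicative set this step can fail. A secondary point worth spelling out, if one prefers to argue with representatives rather than the universal property, is the injectivity of $\phi$: from $ua^{p}=0$ with $u\in S$ one writes $u=v^{p}$ with $v\in S$, deduces $(va)^{p}=0$, and applies the injectivity of $F_{A}$ on $A$ to get $va=0$, whence $a/s=0$ in $A_{\mathfrak{p}}$.
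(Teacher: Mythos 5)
Your proof is correct, and it takes a cleaner route than the paper's. The paper argues element-by-element: it gets the characteristic by invoking Lemma~\ref{L3} through a claimed injection $A\to A_{\mathfrak{p}}$, checks injectivity of $\phi$ on a fraction $a/b$ by deducing $a^{p}=0$ from $a^{p}/b^{p}=0$, and proves surjectivity by producing $p$-th roots $a_{0}/b_{0}$ of $a/b$ using that $A\setminus\mathfrak{p}$ is stable under $p$-th roots (primeness of $\mathfrak{p}$). You instead package everything through the universal property of localization: since $S=A\setminus\mathfrak{p}$ is stable under $F_{A}$ and $F_{A}^{-1}$, both maps localize, and the uniqueness clause shows the two localized maps are mutually inverse, so $\phi$ is an isomorphism in one stroke. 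The shared key point is exactly the stability of $S$ under $p$-th roots, which you rightly isolate as the place where primeness is used. Your version also quietly repairs two small imprecisions in the paper's argument: the localization map $A\to A_{\mathfrak{p}}$ need not be injective (so the appeal to Lemma~\ref{L3} is not literally justified; your direct observation that $p\cdot 1=0$ in the nonzero ring $A_{\mathfrak{p}}$ is the right fix), and $a^{p}/b^{p}=0$ in $A_{\mathfrak{p}}$ only gives $ua^{p}=0$ for some $u\in S$, which your $u=v^{p}$, $(va)^{p}=0$ argument handles correctly, whereas the paper jumps to $a^{p}=0$. So your proposal is not only valid but slightly more careful than the printed proof.
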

\begin{proof}
Note that there is a canonical injection $A\rightarrow A_{\mathfrak{p}}$ which ensures that $A_{\mathfrak{p}}$ has characteristic $p$ due to Lemma \ref{L3}. For the injectivity of $\phi$. Let $a/b$ be an element in $A_{\mathfrak{p}}$ where $a\in A$ and $b\notin\mathfrak{p}$. If $(a/b)^{p}=a^{p}/b^{p}=0$, then we have $a^{p}=0$, which implies that $a=0$ since $A$ is perfect. Hence we have $a/b=0$ so that $\phi$ is injective.

To prove the surjectivity of $\phi$, we have to show that for every $x\in A_{\mathfrak{p}}$, there exists some $y\in A_{\mathfrak{p}}$ such that $y^{p}=x$. Let $a/b$ be an element in $A_{\mathfrak{p}}$ where $a\in A$ and $b\notin\mathfrak{p}$. Since $A$ is perfect, there exists some $a_{0}\in A$ such that $a_{0}^{p}=a$. Further, since $\mathfrak{p}$ is a prime ideal, $b^{p}\notin\mathfrak{p}$ implies that $b\notin\mathfrak{p}$. Thus the Frobenius $A\setminus\mathfrak{p}\rightarrow A\setminus\mathfrak{p}, b\mapsto b^{p}$ is bijective, which shows that for every $b\in A\setminus\mathfrak{p}$, there exists $b_{0}\in A\setminus\mathfrak{p}$ such that $b_{0}^{p}=b$. It follows that there is $a_{0}/b_{0}\in A_{\mathfrak{p}}$ such that $(a_{0}/b_{0})^{p}=a_{0}^{p}/b_{0}^{p}=a/b$. Thus, the Frobenius $\phi:A_{\mathfrak{p}}\rightarrow A_{\mathfrak{p}}$ is an isomorphism.
\end{proof}

Moreover, for the localization at some element in a perfect ring, we have the following lemma.
\begin{lemma}\label{CC1}
Let $A$ be a ring of characteristic $p$ and let $f\in A$. If $A$ is perfect, then $A_{f}$ is also a perfect ring of characteristic $p$, i.e. the Frobenius $\phi:A_{f}\rightarrow A_{f},\ x\mapsto x^{p}$ is an isomorphism.
\end{lemma}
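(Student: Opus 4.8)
The plan is to mimic the proof of \lemref{C}, replacing the prime ideal $\mathfrak p$ by the multiplicative set $\{1,f,f^2,\dots\}$. First I would note the canonical ring map $A\to A_f$ and observe that it is injective: indeed, $A$ is perfect, hence reduced (if $a^p=0$ then $a=0$), so $f$ is a nonzerodivisor unless $f$ is nilpotent; one should handle the degenerate case $f$ nilpotent (then $f=0$ and $A_f=0$, which is trivially perfect of characteristic $p$) separately, and otherwise $A\hookrightarrow A_f$ so that $A_f$ has characteristic $p$ by \lemref{L3}.

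Next I would establish injectivity of $\phi\colon A_f\to A_f$. Take $a/f^n\in A_f$ with $(a/f^n)^p=a^p/f^{np}=0$; then there is $m$ with $f^m a^p=0$, i.e. $(f^m a)^p \cdot (\text{unit adjustment})$... more precisely $f^{mp} a^p = 0$ after multiplying, so $(f^m a)^p=0$, hence $f^m a=0$ since $A$ is reduced, hence $a/f^n=0$ in $A_f$. For surjectivity, given $a/f^n$, use that $A$ is perfect to write $a=a_0^p$ for some $a_0\in A$; also $f^n = (f_0)^p$ where, since $n$ may not be divisible by $p$, I would instead observe $a/f^n = af^{n(p-1)}/f^{np} = (a f^{n(p-1)})/(f^n)^p$ and then write $a f^{n(p-1)} = c_0^p$ using perfection of $A$, giving $a/f^n = (c_0/f^n)^p$. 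Thus $\phi$ is surjective, completing the proof.

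The main obstacle — really a minor technical point rather than a genuine difficulty — is dealing with the exponent bookkeeping in $A_f$: unlike the case of $A_{\mathfrak p}$, the denominator $f^n$ is a specific power and one cannot directly extract a $p$-th root of it inside the multiplicative set, so the trick of multiplying numerator and denominator by a suitable power of $f$ to make the denominator a perfect $p$-th power is the key manoeuvre. Everything else (reducedness of perfect rings, the characteristic claim via \lemref{L3}, the annihilator manipulations defining equality in a localization) is routine. Alternatively, one could give a more structural proof: $A_f \cong A[t]/(ft-1)$, and localization commutes with the colimit presentation $A \cong \varinjlim(A \xrightarrow{\phi} A \xrightarrow{\phi}\cdots)$ of a perfect ring, but the direct computation is shorter and parallels \lemref{C} closely, so I would present that.
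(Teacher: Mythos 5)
Your argument is essentially the paper's: the characteristic claim via the map $A\to A_f$, injectivity of $\phi$ from reducedness of the perfect ring $A$, and surjectivity by multiplying numerator and denominator by $f^{n(p-1)}$ so that the denominator becomes $(f^n)^p$ and perfection of $A$ supplies a $p$-th root of the numerator (the paper phrases this as $(a^{1/p}f_0^{(p-1)m}/f^m)^p=a/f^m$ with $f_0^p=f$, which is the same manoeuvre). One assertion in your write-up is false, though it does not damage the main argument: in a reduced ring a non-nilpotent element need not be a nonzerodivisor (take $A=\mathbb{F}_p\times\mathbb{F}_p$, $f=(1,0)$), so $A\to A_f$ is in general \emph{not} injective even for perfect $A$ --- a slip the paper itself also makes by invoking Lemma~\ref{L3} through a ``canonical injection''. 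The characteristic statement should instead be obtained directly: $p$ maps to $0$ in $A_f$, so the characteristic divides $p$, and if $f$ is not nilpotent then $A_f\neq 0$, forcing characteristic exactly $p$; your treatment of the degenerate case ($f$ nilpotent, hence $f=0$ and $A_f=0$) is fine. With that one sentence repaired, your proof is correct and coincides with the paper's, and is in fact more careful on the injectivity of $\phi$, which the paper dismisses as obvious.
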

\begin{proof}
Note that there is a canonical injection $A\rightarrow A_{f}$ which ensures that $A_{f}$ has characteristic $p$ due to Lemma \ref{L3}. It is obvious that $\phi$ is injective. For the surjectivity of $\phi$, we observe that for every $a/f^{m}\in A_{f}$ where $m\in\N_{>0}$, there exists some $f_{0}\in A$ with $f_{0}^{p}=f$ such that $(a^{1/p}f_{0}^{(p-1)m}/f^{m})^{p}=a/f^{m}$. Thus, the Frobenius $\phi:A_{f}\rightarrow A_{f}$ is an isomorphism.
\end{proof}

\subsection{Perfect schemes}\ \label{B1}

Recall that a scheme $X$ is said to be \textit{characteristic $p$}, if $p$ is zero in the structure sheaf $\mathscr{O}_{X}$, or equivalently, $X$ is an $\mathbb{F}_{p}$-scheme, i.e. $X$ is a scheme over a finite field $\mathbb{F}_{p}$.

Following the notation in \cite{SGA3}, the category of schemes in characteristic $p$ will be denoted by $\textrm{Sch}/\mathbb{F}_{p}$. And we denote the category of perfect schemes of characteristic $p$ by $\textrm{Perf}/\mathbb{F}_{p}$ following \cite{Bertapellea}. Clearly, $\textrm{Perf}/\mathbb{F}_{p}$ is a full subcategory of $\textrm{Sch}/\mathbb{F}_{p}$.

The \textit{absolute Frobenius} of a scheme $X$ is the morphism $\Phi_{X}:X\rightarrow X$ which is given by the identity on the underlying topological space, together with an endomorphism of sheaf $\Phi_{X}^{\sharp}:\mathscr{O}_{X}\rightarrow\mathscr{O}_{X},g\mapsto g^{p}$.
\begin{definition}[{\cite[\S5]{Bertapellea}}]
Let $X$ be a scheme of characteristic $p$. Then $X$ is said to be {\rm{perfect}} if the absolute Frobenius morphism $\Phi_{X}:X\rightarrow X$ of $X$ is an isomorphism.
\end{definition}

The following proposition characterizes the affine opens of perfect schemes.
\begin{proposition}\label{PP1}
Let $X$ be an $\mathbb{F}_{p}$-scheme. If $X$ is perfect, then every affine open $U\subset X$ is a perfect affine scheme. In particular, if $X={\rm{Spec}}(A)$ is a perfect affine scheme for some ring $A$, then every standard open set $D(f)\subset X,f\in A$ is a perfect affine subscheme of $X$.
\end{proposition}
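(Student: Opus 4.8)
The plan is to reduce the statement about affine opens of a perfect scheme to the two localization lemmas already established (Lemma~\ref{C} and Lemma~\ref{CC1}), by passing through the absolute Frobenius. First I would observe that if $X$ is perfect, then $\Phi_X \colon X \to X$ is an isomorphism of schemes; since $\Phi_X$ is the identity on the underlying topological space, for any open $U \subseteq X$ the restriction $\Phi_X|_U \colon U \to U$ is again a morphism of schemes which is the identity on the topological space, and it coincides with the absolute Frobenius $\Phi_U$ of $U$ (the Frobenius is compatible with restriction to opens because $\Phi^\sharp$ is the $p$-th power map on sections, which commutes with restriction maps of the structure sheaf). Since $\Phi_X$ is an isomorphism and isomorphisms restrict to isomorphisms on opens, $\Phi_U = \Phi_X|_U$ is an isomorphism, so $U$ is perfect; in particular $U$ has characteristic $p$ as well, being an open subscheme of an $\mathbb{F}_p$-scheme.

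Next, for the affine case, suppose $U = \mathrm{Spec}(B) \subseteq X$ is an affine open. By the previous paragraph $U$ is a perfect scheme of characteristic $p$, so the absolute Frobenius $\Phi_U \colon \mathrm{Spec}(B) \to \mathrm{Spec}(B)$ is an isomorphism of schemes. Taking global sections gives that the Frobenius ring endomorphism $F_B \colon B \to B$, $b \mapsto b^p$, is an isomorphism of rings, which is precisely the statement that $B$ is a perfect ring. Hence every affine open of $X$ is a perfect affine scheme. For the "in particular" clause, assume $X = \mathrm{Spec}(A)$ with $A$ perfect of characteristic $p$. Then for $f \in A$ the standard open $D(f) \subseteq X$ is affine with $D(f) = \mathrm{Spec}(A_f)$, and by Lemma~\ref{CC1} the localization $A_f$ is a perfect ring of characteristic $p$; equivalently $D(f)$ is a perfect affine subscheme of $X$. (Alternatively, one can simply invoke the first part of the proposition, since $D(f)$ is an affine open of the perfect scheme $X$; citing Lemma~\ref{CC1} makes the ring-level statement explicit.)

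I do not expect a serious obstacle here; the proof is essentially bookkeeping. The one point that deserves care is the compatibility claim $\Phi_X|_U = \Phi_U$, i.e. that restricting the absolute Frobenius of $X$ to an open $U$ gives the absolute Frobenius of $U$. This holds because the absolute Frobenius is functorial and, more concretely, because on an open cover the map $\Phi^\sharp \colon \mathcal{O} \to \mathcal{O}$ is defined section-wise as $g \mapsto g^p$, which is visibly compatible with the restriction homomorphisms $\mathcal{O}_X(V) \to \mathcal{O}_X(V')$ for $V' \subseteq V$ opens; thus $\Phi_X^\sharp$ restricts to $\Phi_U^\sharp$ on $U$. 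Once this is noted, everything else follows formally from "an isomorphism of schemes restricts to an isomorphism on any open subscheme" together with the equivalence between perfectness of an affine scheme $\mathrm{Spec}(B)$ and perfectness of the ring $B$, which is immediate from the definition of the absolute Frobenius on affines. The only genuine input beyond this is Lemma~\ref{CC1}, already proved in \S\ref{BB1}, used to identify $\mathcal{O}_X(D(f)) = A_f$ as a perfect ring.
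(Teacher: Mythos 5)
Your proof is correct and follows essentially the same route as the paper: restrict the isomorphic absolute Frobenius $\Phi_X$ to an affine open to see that the induced ring Frobenius is an isomorphism, and invoke Lemma~\ref{CC1} for the standard opens $D(f)$. Your explicit verification that $\Phi_X|_U=\Phi_U$ is a welcome bit of care that the paper's proof leaves implicit, but it is the same argument.
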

\begin{proof}
Let $\Phi_{X}:X\rightarrow X$ be the isomorphic absolute Frobenius. Then for each affine open $U=\textrm{Spec}(R)\subset X$ for some ring $R$, the induced Frobenius $R\rightarrow R$ is an isomorphism. This implies that $\textrm{Spec}(R)$ is perfect. If $X=\textrm{Spec}(A)$ is affine, then for each standard open $D(f)\subset X,f\in A$, the induced Frobenius $A_{f}\rightarrow A_{f}$ is an isomorphism by Lemma \ref{CC1}. Thus, $D(f)$ is a perfect affine subscheme of $X$.
\end{proof}

Every scheme \'{e}tale over a perfect field of characteristic $p$ is perfect.
\begin{proposition}[{\cite[Proposition 5.19]{Bertapellea}}]\label{A13}
Let $k$ be a perfect field of characteristic $p$ and let $X$ be a $k$-scheme. If $X$ is \'{e}tale over $k$, then $X$ is perfect.
\end{proposition}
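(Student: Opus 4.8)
The plan is to reduce to the classical fact that an algebraic extension of a perfect field is again perfect, and then to glue. First I would invoke the structure theory of schemes \'{e}tale over a field: since $X$ is \'{e}tale over $\operatorname{Spec}(k)$, it admits an affine open cover by spectra $\operatorname{Spec}(A_{\lambda})$ in which each $A_{\lambda}$ is an \'{e}tale $k$-algebra, hence a finite product $A_{\lambda}\cong\prod_{i}k_{\lambda,i}$ of finite separable field extensions $k_{\lambda,i}/k$ (see \cite{StackProject}). Each $k_{\lambda,i}$ contains $k$, so it has characteristic $p$ by Lemma~\ref{L3}; in particular $X$ has characteristic $p$, so it is meaningful to ask whether $X$ is perfect.

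Next I would record the standard fact that every algebraic extension $L$ of a perfect field $k$ of characteristic $p$ is perfect. The argument is short: the Frobenius $\phi_{L}\colon L\to L$, $x\mapsto x^{p}$, is injective because $L$ is a domain, and for surjectivity one takes $\alpha\in L$ with minimal polynomial $f=\sum_{j}a_{j}x^{j}\in k[x]$, writes $a_{j}=b_{j}^{p}$ (possible since $k$ is perfect), and notes that $f(x^{p})=\bigl(\sum_{j}b_{j}x^{j}\bigr)^{p}=g(x)^{p}$, so a $p$-th root $\alpha^{1/p}$ chosen in an algebraic closure satisfies $g(\alpha^{1/p})=0$; comparing degrees gives $[k(\alpha^{1/p}):k]\le\deg g=\deg f=[k(\alpha):k]\le[k(\alpha^{1/p}):k]$, whence $k(\alpha^{1/p})=k(\alpha)\subseteq L$ and $\alpha$ has a $p$-th root in $L$. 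Applying this to each $k_{\lambda,i}$, the Frobenius of $k_{\lambda,i}$ is an isomorphism, hence so is the Frobenius of $A_{\lambda}$ (which acts componentwise), and therefore each $\operatorname{Spec}(A_{\lambda})$ is a perfect affine scheme.

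Finally I would glue. The absolute Frobenius $\Phi_{X}$ is the identity on the underlying topological space, so $\Phi_{X}^{-1}(U_{\lambda})=U_{\lambda}$ and the restriction of $\Phi_{X}$ to $U_{\lambda}$ is exactly $\Phi_{U_{\lambda}}$, which is an isomorphism by the previous step. Since being an isomorphism of schemes is local on the target, $\Phi_{X}$ is an isomorphism, i.e.\ $X$ is perfect.

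The only non-formal ingredient here is the statement about algebraic extensions of perfect fields; everything else is bookkeeping. The steps I would be most careful about are the reduction in the first paragraph --- checking that the \'{e}tale hypothesis really does force each affine piece to be a product of \emph{finite separable} extensions of $k$ --- and the (standard, but easy to misstate) observation used in the gluing step that the absolute Frobenius of $X$ restricts on an open subscheme to the absolute Frobenius of that subscheme.
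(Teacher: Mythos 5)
Your proof is correct. Note that the paper gives no argument for this statement at all: it is imported verbatim from \cite[Proposition 5.19]{Bertapellea}, so there is no internal proof to compare against, and your write-up in effect supplies the missing argument. The three ingredients you isolate are all sound: every affine open of $X$ is étale over $k$ (open immersions are étale and compositions of étale morphisms are étale), hence is the spectrum of an étale $k$-algebra, i.e.\ a finite product of finite separable field extensions of $k$; an algebraic extension of a perfect field is perfect, by exactly the minimal-polynomial argument you give (and your degree comparison $[k(\alpha^{1/p}):k]\le\deg g=\deg f=[k(\alpha):k]$ is the right way to see $\alpha^{1/p}\in L$); and the absolute Frobenius is the identity on the underlying space, restricts on each open subscheme to the absolute Frobenius of that open, and being an isomorphism is local on the target. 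A slightly shorter route, closer in spirit to the machinery the paper sets up around relative Frobenius (cf.\ Lemma \ref{L5} and Proposition \ref{P1}), is to factor $\Phi_{X}=\mathrm{pr}_{X}\circ\Phi_{X/k}$: for $X$ étale over $k$ the relative Frobenius $\Phi_{X/k}:X\rightarrow X^{(p)}$ is an isomorphism, and the projection $X^{(p)}=X\times_{k,\Phi_{k}}k\rightarrow X$ is an isomorphism because $\Phi_{k}$ is, $k$ being perfect; this avoids the classification of étale $k$-algebras. But your elementary argument is complete and correct as it stands.
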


The category $\textrm{Perf}/\mathbb{F}_{p}$ of perfect schemes is stable under fibre products.
\begin{proposition}[{\cite[Remarks 5.18]{Bertapellea}}] \label{fibre product}
Let $k$ be a perfect field of characteristic $p$ and let $T$ be a perfect $k$-scheme. If $U,V$ are perfect $T$-schemes of characteristic $p$, then the fibre product $U\times_{T}V$ is also a perfect scheme of characteristic $p$.
\end{proposition}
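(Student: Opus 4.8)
The plan is to reduce the statement to a ring-theoretic fact and then prove that fact by exhibiting an explicit inverse to a Frobenius endomorphism.

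First I would pass to the affine situation. Cover $T$ by affine opens ${\rm Spec}(A)$; over each of these, cover the preimages in $U$ and in $V$ by affine opens ${\rm Spec}(B)$ and ${\rm Spec}(C)$. By Proposition \ref{PP1} each of $A$, $B$, $C$ is then a perfect ring of characteristic $p$, and $B$, $C$ are naturally $A$-algebras, so that $U\times_T V$ is covered by the affine opens ${\rm Spec}(B\otimes_A C)$. Since the absolute Frobenius $\Phi_{U\times_T V}$ is the identity on the underlying topological space, it is an isomorphism precisely when the induced morphism of structure sheaves is an isomorphism, which may be checked on this affine cover. Hence the proposition reduces to the following claim: if $A\to B$ and $A\to C$ are ring homomorphisms with $A$, $B$, $C$ perfect of characteristic $p$, then $B\otimes_A C$ is perfect of characteristic $p$.

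For the claim, that $B\otimes_A C$ has characteristic $p$ is immediate, since $p=0$ already in the image of $B$. Let $F$ denote the Frobenius endomorphism of $B\otimes_A C$, so that $F(b\otimes c)=b^{p}\otimes c^{p}$; I would construct its inverse as the map $g$ sending $b\otimes c\mapsto F_B^{-1}(b)\otimes F_C^{-1}(c)$, which is meaningful because $B$ and $C$ are perfect. The one point that is not purely formal is the well-definedness of $g$ on $B\otimes_A C$, i.e.\ its compatibility with the relation $(ab)\otimes c=b\otimes(ac)$ for $a\in A$. From the compatibility of $F_A$ and $F_B$ with the structure map $A\to B$ one deduces the identity $F_B^{-1}(ab)=F_A^{-1}(a)\,F_B^{-1}(b)$ in $B$ — and here one genuinely uses that $A$, equivalently $T$, is perfect, so that $F_A^{-1}$ is available — and similarly in $C$; moving the element $F_A^{-1}(a)\in A$ across the tensor product then identifies $g((ab)\otimes c)$ with $g(b\otimes(ac))$. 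Granting $g$, the composites $g\circ F$ and $F\circ g$ act as the identity on elementary tensors, hence on all of $B\otimes_A C$, so $F$ is an isomorphism and $B\otimes_A C$ is perfect.

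I expect the well-definedness of the inverse map to be the only real obstacle, and it is exactly the place where the hypothesis that $T$ (and not merely $U$ and $V$) is perfect enters: without $F_A$ invertible there is no way to untwist the $A$-module structure on the tensor factors. The remaining ingredients — the reduction to affine opens via Proposition \ref{PP1}, the characteristic computation, and the check of the inverse on elementary tensors — are routine. Alternatively one can run the core argument geometrically: $\Phi_{U\times_T V}$ is the morphism induced on the fibre product by $\Phi_U$, $\Phi_V$ and $\Phi_T$ lying over $\Phi_T$, and when all three are isomorphisms the triple $\Phi_U^{-1}$, $\Phi_V^{-1}$, $\Phi_T^{-1}$ induces a two-sided inverse to it; this is the same computation phrased through the universal property of the fibre product.
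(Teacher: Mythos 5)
Your proof is correct. It is worth noting that the paper does not actually prove this proposition: it is quoted from \cite[Remarks 5.18]{Bertapellea}, and the paper's own treatment of fibre products of perfect schemes is the neighbouring Proposition~\ref{A1}, proved by computing stalks $\mathscr{O}_{U\times_T V,(x,y,s,\mathfrak p)}=(\mathscr{O}_{U,x}\otimes_{\mathscr{O}_{T,s}}\mathscr{O}_{V,y})_{\mathfrak p}$ and invoking Lemmas~\ref{C} and~\ref{CC1}, after asserting that the Frobenius of $\mathscr{O}_{U,x}\otimes_{\mathscr{O}_{T,s}}\mathscr{O}_{V,y}$ is an isomorphism because the Frobenii of the two factors are. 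Your route is different and better calibrated: you reduce to an affine cover by $\mathrm{Spec}(B\otimes_A C)$ and exhibit an explicit two-sided inverse $b\otimes c\mapsto F_B^{-1}(b)\otimes F_C^{-1}(c)$, whose well-definedness ($A$-balancedness) is exactly where perfectness of $T$ enters through $F_A^{-1}$; since the composites with Frobenius are additive and agree with the identity on elementary tensors, bijectivity follows. This pinpointing is not cosmetic: surjectivity of Frobenius on $B\otimes_A C$ is automatic once $B,C$ are perfect, but injectivity genuinely requires the base to be perfect --- for $A=\mathbb{F}_p[t]$ and $B=C=\mathbb{F}_p[t^{1/p^{\infty}}]$ the element $t^{1/p}\otimes 1-1\otimes t^{1/p}$ is a nonzero element of $B\otimes_A C$ killed by Frobenius --- so the stalkwise argument used for the stronger Proposition~\ref{A1}, which drops the hypothesis on $T$, leaves precisely this injectivity unaddressed, whereas under the hypotheses of the present proposition your argument closes it. Your alternative phrasing via the universal property of the fibre product, producing an inverse from $\Phi_U^{-1},\Phi_V^{-1},\Phi_T^{-1}$, is also fine, and again uses invertibility of $\Phi_T$ in the compatibility $f\circ\Phi_U^{-1}=\Phi_T^{-1}\circ f$.
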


We can decrease the conditions of \proref{fibre product} and obtain the following proposition.
\begin{proposition}\label{A1}
Let $U,V$ be perfect schemes of characteristic $p$ and let $f:U\rightarrow T,g:V\rightarrow T$ be a morphism of schemes. Then the fibre product $U\times_{T}V$ is a perfect scheme of characteristic $p$.
\end{proposition}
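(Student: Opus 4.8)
The plan is to reduce the assertion to the perfectness of a tensor product of rings and then to verify directly that the Frobenius endomorphism of that ring is bijective. A scheme $X$ of characteristic $p$ is perfect if and only if it admits an open cover by perfect affine schemes: since $\Phi_X$ is the identity on the underlying topological space, $\Phi_X$ is an isomorphism if and only if $\Phi_X^{\sharp}\colon\mathscr O_X\to\mathscr O_X$ is, and this can be tested on an affine open cover (the forward implication is Proposition~\ref{PP1}). Covering $T$ by affine opens and then covering $U$ and $V$ by affine opens mapping into them, $U\times_T V$ is covered by opens of the form $\mathrm{Spec}(A\otimes_C B)$ with $\mathrm{Spec}(A)\subset U$ and $\mathrm{Spec}(B)\subset V$ mapping into $\mathrm{Spec}(C)\subset T$; by Proposition~\ref{PP1}, $A$ and $B$ are perfect rings of characteristic $p$. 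Since $p$ is killed in $A\otimes_C B$ this ring has characteristic $p$, and since the maps $C\to A$ and $C\to B$ kill $p$ they factor through $C/pC$, with $A\otimes_C B=A\otimes_{C/pC}B$; so we may assume $C$ has characteristic $p$. It therefore suffices to prove: if $A,B$ are perfect rings of characteristic $p$ and $C\to A$, $C\to B$ are ring maps from a ring $C$ of characteristic $p$, then $A\otimes_C B$ is a perfect ring.

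For surjectivity of $\phi\colon A\otimes_C B\to A\otimes_C B$, $x\mapsto x^p$: given $\sum_i a_i\otimes b_i$, perfectness of $A$ and $B$ furnishes $a_i',b_i'$ with $(a_i')^p=a_i$ and $(b_i')^p=b_i$, so $a_i\otimes b_i=(a_i'\otimes b_i')^p$; and since $(\sum_i x_i)^p=\sum_i x_i^p$ in any ring of characteristic $p$, we get $\sum_i a_i\otimes b_i=\bigl(\sum_i a_i'\otimes b_i'\bigr)^p$. Thus $\phi$ is surjective. As $A\otimes_C B$ has characteristic $p$, injectivity of $\phi$ is equivalent to $A\otimes_C B$ being reduced, so this is the only point that remains.

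Proving that $A\otimes_C B$ is reduced is the crux, and I expect it to be the main obstacle. The ingredients at hand are that $A$ and $B$ are reduced — a perfect ring has injective Frobenius, hence no nonzero nilpotents — and that Proposition~\ref{fibre product} already yields perfectness, in particular reducedness, of the tensor product when the base ring is perfect. Because $A$ and $B$ are perfect, the structure maps $C\to A$ and $C\to B$ factor through the perfection of $C$ (see \S\ref{B1}), which makes it natural to try to replace the base by a perfect ring and then invoke Proposition~\ref{fibre product}. The delicate point is that enlarging the base of a tensor product can a priori introduce new nilpotents, so a genuine argument is needed to show that $A\otimes_C B$ acquires no nilpotents beyond those already controlled over the perfection of $C$; this is where the real work will be. Once $A\otimes_C B$ is known to be reduced, $\phi$ is bijective and $A\otimes_C B$ is a perfect ring of characteristic $p$; gluing over the affine cover then shows that $U\times_T V$ is a perfect scheme of characteristic $p$.
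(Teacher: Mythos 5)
Your reduction to the affine statement and your proof that Frobenius is surjective on $A\otimes_C B$ are fine, but the step you yourself flag as the crux --- reducedness, i.e.\ injectivity of Frobenius on $A\otimes_C B$ --- is a genuine gap, and it cannot be closed: that claim, and hence Proposition~\ref{A1} as stated, is false without a hypothesis on the base. Take $C=\mathbb{F}_p[t]$, $A=\mathbb{F}_p[t^{1/p^{\infty}}]=\bigcup_{n\geq 0}\mathbb{F}_p[t^{1/p^{n}}]$ (a perfect ring of characteristic $p$), and $B=\mathbb{F}_p$ with $C\to B$ given by $t\mapsto 0$. Then $A\otimes_C B\cong A/tA$, and $t^{1/p}$ is nonzero in $A/tA$ (any element of $tA\cap\mathbb{F}_p[t^{1/p^{n}}]$ has all exponents at least $p^{n}$ when written in $t^{1/p^{n}}$), while $(t^{1/p})^{p}=t=0$ there; so $A\otimes_C B$ is not reduced and its Frobenius is surjective but not injective. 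Geometrically, $U={\rm Spec}(A)$ and $V={\rm Spec}(\mathbb{F}_p)$ are perfect schemes of characteristic $p$ over $T=\mathbb{A}^{1}_{\mathbb{F}_p}$, yet $U\times_T V$ is non-reduced, hence not perfect. Your proposed repair --- factoring $C\to A$ and $C\to B$ through the perfection of $C$ and invoking Proposition~\ref{fibre product} --- founders exactly where you feared: $A\otimes_{C^{\rm perf}}B$ is a quotient of $A\otimes_C B$, and in the example the quotient map $A/tA\to A\otimes_{C^{\rm perf}}B=\mathbb{F}_p$ kills precisely the nilpotents $t^{1/p^{n}}$, so reducedness cannot be transferred back.

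For comparison, the paper's own proof works stalkwise and simply asserts that $g_1\otimes g_2\mapsto g_1^{p}\otimes g_2^{p}$ is an isomorphism of $\mathscr{O}_{U,x}\otimes_{\mathscr{O}_{T,s}}\mathscr{O}_{V,y}$, passing over injectivity in silence; your honest identification of the missing step therefore locates a real error in the proposition rather than a deficiency of your argument. The statement does hold when $T$ is perfect, which is exactly Proposition~\ref{fibre product}; in that case your approach is completed by observing that $a\otimes b\mapsto a^{1/p}\otimes b^{1/p}$ is a well-defined additive map (it is balanced over $C$ because $c^{1/p}\in C$) inverse to Frobenius. Without such a hypothesis the correct assertion is only that perfect schemes admit fibre products in the perfect category, computed as the perfection of the scheme-theoretic fibre product.
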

\begin{proof}
Assume that $U,V$ are perfect schemes of characteristic $p$ over $T$. It is easy to see that $U\times_{T}V$ always has characteristic $p$. To prove that the fibre product $U\times_{T}V$ is a perfect scheme, we should consider the structure sheaf of the fiber product $\mathscr{O}_{U\times_{T}V}$. Then it suffices to show that the Frobenius $\mathscr{O}_{U\times_{T} V}\rightarrow\mathscr{O}_{U\times_{T}V}$ is an isomorphism.

Take the stalk of $\mathscr{O}_{U\times_{T}V}$ at a point $(x,y,s,\mathfrak{p})$, where $x\in U$, $y\in V$, $s\in T$ with $f(x)=g(y)=s$, and $\mathfrak{p}\subset\mathscr{O}_{U,x}\otimes_{\mathscr{O}_{T,s}}\mathscr{O}_{V,y}$ is a prime ideal which restricts to maximal ideals in $\mathscr{O}_{U,x}$ and $\mathscr{O}_{V,y}$, i.e. $\mathfrak{p}\cap\mathscr{O}_{U,x}=\mathfrak{m}_{x},\mathfrak{p}\cap\mathscr{O}_{V,y}=\mathfrak{m}'_{y}$. Then we have
$$
\mathscr{O}_{U\times_{T}V,(x,y,s,\mathfrak{p})}=(\mathscr{O}_{U,x}\otimes_{\mathscr{O}_{T,s}}\mathscr{O}_{V,y})_{\mathfrak{p}}.
$$
%
Since $U,V$ are perfect schemes, the Frobenius morphisms $\mathscr{O}_{U}\rightarrow\mathscr{O}_{U}$ and $\mathscr{O}_{V}\rightarrow\mathscr{O}_{V}$ are isomorphisms, which implies that the Frobenius morphisms $\mathscr{O}_{U,x}\rightarrow\mathscr{O}_{U,x}$ and $\mathscr{O}_{V,y}\rightarrow\mathscr{O}_{V,y}$ for each $x\in U$ and $y\in V$ are isomorphisms by Lemma \ref{CC1}. Hence the Frobenius
$$
\mathscr{O}_{U,x}\otimes_{\mathscr{O}_{T,s}}\mathscr{O}_{V,y}\longrightarrow\mathscr{O}_{U,x}\otimes_{\mathscr{O}_{T,s}}\mathscr{O}_{V,y},\ g_{1}\otimes g_{2}\longmapsto g_{1}^{p}\otimes g_{2}^{p}=(g_{1}\otimes g_{2})^{p}
$$
is an isomorphism for $x\in U$ and $y\in V$. By Lemma \ref{C}, the Frobenius $\mathscr{O}_{U\times_{T} V,(x,y,s,\mathfrak{p})}\rightarrow\mathscr{O}_{U\times_{T}V,(x,y,s,\mathfrak{p})}$ is an isomorphism, which implies that the Frobenius $\mathscr{O}_{U\times_{T} V}\rightarrow\mathscr{O}_{U\times_{T}V}$ is an isomorphism.
\end{proof}

On the other hand, the absolute Frobenius defines a self-map of the identity functor on the category ${\rm Sch}/\mathbb{F}_{p}$.
\begin{lemma}[{\cite[Tag0CC7]{StackProject}}]\label{L4}
Let $f:X\rightarrow Y$ be a morphism of $\mathbb{F}_{p}$-schemes. Then the following diagram
$$
\xymatrix{
  X \ar[d]_{\Phi_{X}} \ar[r]^{f} & Y \ar[d]^{\Phi_{Y}} \\
  X \ar[r]^{f} & Y   }
$$
commutes, where $\Phi_X:X\to X$ (resp. $\Phi_Y:Y\to Y$) is the absolute Frobenius of $X$ (resp. $Y$).
\end{lemma}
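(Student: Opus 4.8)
The plan is to unwind the definition of the absolute Frobenius and reduce the claimed commutativity to the trivial fact that a ring homomorphism commutes with taking $p$-th powers. First I would observe that the two composites $\Phi_{Y}\circ f$ and $f\circ\Phi_{X}$ induce the same continuous map on underlying topological spaces: since $\Phi_{X}$ and $\Phi_{Y}$ are by definition the identity on $|X|$ and $|Y|$, both composites induce exactly the map $|f|\colon |X|\to |Y|$. Consequently $(\Phi_{X})_{*}$ and $(\Phi_{Y})_{*}$ are the identity functors on sheaves, so it remains only to check that the two associated morphisms of sheaves $\mathscr{O}_{Y}\to f_{*}\mathscr{O}_{X}$ coincide.

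Next I would evaluate on an arbitrary open $V\subseteq Y$ with preimage $U=|f|^{-1}(V)$. Tracing through the formula for the sheaf map of a composition of morphisms of ringed spaces, the morphism attached to $f\circ\Phi_{X}$ sends $g\in\mathscr{O}_{Y}(V)$ to $\bigl(f^{\sharp}_{V}(g)\bigr)^{p}\in\mathscr{O}_{X}(U)$, whereas the morphism attached to $\Phi_{Y}\circ f$ sends $g$ to $f^{\sharp}_{V}(g^{p})$. Since $f^{\sharp}_{V}\colon\mathscr{O}_{Y}(V)\to\mathscr{O}_{X}(U)$ is a homomorphism of $\mathbb{F}_{p}$-algebras, we have $f^{\sharp}_{V}(g^{p})=\bigl(f^{\sharp}_{V}(g)\bigr)^{p}$, so the two maps agree on $V$; as $V$ was arbitrary and all these identifications are compatible with restriction, the two morphisms of sheaves are equal, and hence $\Phi_{Y}\circ f=f\circ\Phi_{X}$.

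Alternatively one can reduce to the affine case: choosing affine opens $\mathrm{Spec}(A)\subseteq Y$ and $\mathrm{Spec}(B)\subseteq X$ with $f(\mathrm{Spec}(B))\subseteq\mathrm{Spec}(A)$, the restricted square corresponds on coordinate rings to the identity $\varphi(a^{p})=\varphi(a)^{p}$ for the structure map $\varphi\colon A\to B$ and the two Frobenius endomorphisms $F_{A}$, $F_{B}$; one then glues, using that a morphism of schemes is determined by its restrictions to an open cover. There is no genuine obstacle here: the content of the lemma is purely formal, and the only care required is the bookkeeping of the canonical identifications $(\Phi_{X})_{*}\mathscr{F}=\mathscr{F}$ and $(\Phi_{Y})_{*}\mathscr{G}=\mathscr{G}$ when composing morphisms of ringed spaces. (Equivalently, one may simply invoke \cite[Tag0CC7]{StackProject}.)
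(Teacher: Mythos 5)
Your proof is correct, and it is precisely the standard argument: the paper itself gives no proof of Lemma \ref{L4} but simply cites \cite[Tag0CC7]{StackProject}, whose verification is exactly your reduction to the identity on underlying spaces plus the fact that the comparison of sheaf maps amounts to $f^{\sharp}_{V}(g^{p})=\bigl(f^{\sharp}_{V}(g)\bigr)^{p}$, which holds since $f^{\sharp}_{V}$ is a ring homomorphism. Nothing further is needed.
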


Now, we work with schemes with some fixed base change. Let $X$ be a scheme over a base scheme $S$. Let $X^{(p)}:=X\times_{S,\Phi_{S}}S$ denote the base change of $X$ via the absolute Frobenius $\Phi_{S}:S\rightarrow S$. Here is the Cartesian diagram.
$$
\xymatrix{
  X^{(p)} \ar[d]_{\textrm{pr}_{S}}  \ar[r]^{\textrm{pr}_{X}} & X \ar[d]^{q} \\
  S \ar[r]^{\Phi_{S}} & S   }
$$
Then another commutative diagram
$$
\xymatrix{
  X \ar[d]_{q} \ar[r]^{\Phi_{X}} & X \ar[d]^{q} \\
  S \ar[r]^{\Phi_{S}} & S   }
$$
(where $\Phi_{X}$ is the absolute Frobenius of $X$) yields a unique $S$-morphism $\Phi_{X/S}:X\rightarrow X^{(p)}$ that makes the diagram
$$
\xymatrix{
  X \ar@/_/[ddr]_{q} \ar@/^/[drr]^{\Phi_{X}}
    \ar@{.>}[dr]|-{\Phi_{X/S}}                   \\
   & X^{(p)} \ar[d]^{} \ar[r]_{\textrm{pr}_{X}}
                      & X \ar[d]_{q}    \\
   & S \ar[r]^{\Phi_{S}}     & S               }
$$
commute.
\begin{definition}[\cite{SGA3}, VII$_{A}$, \S4]
Consider the situation above. The unique $S$-morphism $\Phi_{X/S}:X\rightarrow X^{(p)}$ is called the relative Frobenius morphism of $X$.
\end{definition}

More generally, for every $n\in\N$, let $X^{(p_n^{n})}:=X\times_{S,\Phi_{S}^{n}}S_{\Phi_{S}^{n}}$, where $\Phi_{S}^{n}$ means the composite of $\Phi_{S}$ for $n$ times and $S_{\Phi_{S}^{n}}$ is regarded as an $S$-scheme via $\Phi_{S}^{n}$. Then we have the Cartesian diagram
$$
\xymatrix{
  X^{(p_n^{n})} \ar[d]_{\textrm{pr}_{S}} \ar[r]^{\textrm{pr}_{X}} & X \ar[d]^{q} \\
  S \ar[r]^{\Phi_{S}^{n}} & S   }
$$

If the base scheme $S$ is perfect, then the absolute Frobenius $\Phi_{S}:S\to S$ is an isomorphism. This enables us to consider its inverse $\Phi_{S}^{-1}$. Hence, we can extend $n$ above to arbitrary integers. Let $X^{(p^{n})}:=X\times_{S,\Phi_{S}^{n}}S$ for $n\in\Z$. Then we have a canonical isomorphism
$$
(X^{(p^{n})})^{(p)}\cong X^{(p^{n+1})}.
$$

Note that $X\mapsto X^{(p)}$ is a functor. In fact, it is the base change functor of the absolute Frobenius $\Phi_{S}:S\rightarrow S$. Thus, we have the following lemma analogous to Lemma \ref{L4}.
\begin{lemma}[{\cite[Tag0CCA]{StackProject}}]\label{L5}
Let $f:X\rightarrow Y$ is a morphism of $\mathbb{F}_{p}$-schemes over $S$. Then the following diagram
$$
\xymatrix{
  X \ar[d]_{f} \ar[r]^{\Phi_{X/S}} & X^{(p)} \ar[d]^{f^{(p)}} \\
  Y \ar[r]^{\Phi_{Y/S}} & Y^{(p)}   }
$$
commutes.
\end{lemma}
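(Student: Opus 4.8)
The plan is to deduce the commutativity formally from the universal property of the fibre products $X^{(p)}=X\times_{S,\Phi_{S}}S$ and $Y^{(p)}=Y\times_{S,\Phi_{S}}S$, with \lemref{L4} as the one geometric input. Both $f^{(p)}\circ\Phi_{X/S}$ and $\Phi_{Y/S}\circ f$ are morphisms $X\rightarrow Y^{(p)}$, so it suffices to check that they agree after composing with each of the two canonical projections $\textrm{pr}_{Y}\colon Y^{(p)}\rightarrow Y$ and $\textrm{pr}_{S}\colon Y^{(p)}\rightarrow S$.

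First I would compose with $\textrm{pr}_{Y}$. By definition $f^{(p)}$ is the base change of $f$ along $\Phi_{S}$, so $\textrm{pr}_{Y}\circ f^{(p)}=f\circ\textrm{pr}_{X}$; together with the defining relation $\textrm{pr}_{X}\circ\Phi_{X/S}=\Phi_{X}$ of the relative Frobenius this yields
$$\textrm{pr}_{Y}\circ f^{(p)}\circ\Phi_{X/S}=f\circ\textrm{pr}_{X}\circ\Phi_{X/S}=f\circ\Phi_{X}.$$
On the other side, $\textrm{pr}_{Y}\circ\Phi_{Y/S}=\Phi_{Y}$ gives $\textrm{pr}_{Y}\circ\Phi_{Y/S}\circ f=\Phi_{Y}\circ f$. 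Now \lemref{L4} says exactly that $f\circ\Phi_{X}=\Phi_{Y}\circ f$, so the two $\textrm{pr}_{Y}$-components coincide.

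Next I would compose with $\textrm{pr}_{S}$, using that $\Phi_{X/S}$, $\Phi_{Y/S}$ and $f^{(p)}$ are all $S$-morphisms. Writing $q_{X}\colon X\rightarrow S$ and $q_{Y}\colon Y\rightarrow S$ for the structure morphisms, one gets $\textrm{pr}_{S}\circ f^{(p)}\circ\Phi_{X/S}=\textrm{pr}_{S}\circ\Phi_{X/S}=q_{X}$ and $\textrm{pr}_{S}\circ\Phi_{Y/S}\circ f=q_{Y}\circ f=q_{X}$, the last equality because $f$ lies over $S$. Hence the $\textrm{pr}_{S}$-components agree as well, and the universal property of $Y^{(p)}$ forces $f^{(p)}\circ\Phi_{X/S}=\Phi_{Y/S}\circ f$, which is the asserted commutativity.

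The argument is entirely formal, so there is no genuine obstacle; the only point demanding care is the bookkeeping of the several structure and projection morphisms around $X^{(p)}$ and $Y^{(p)}$, and applying the single nontrivial input --- naturality of the absolute Frobenius (\lemref{L4}) --- at the right place. Alternatively, one may package the whole thing conceptually: base change along $\Phi_{S}$ is the functor $X\mapsto X^{(p)}$, and $X\mapsto\Phi_{X/S}$ is a natural transformation from the identity functor to it precisely because the absolute Frobenius is such a natural transformation; the square in question is then simply its naturality square.
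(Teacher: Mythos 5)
Your proof is correct, and it is essentially the argument the paper delegates to its citation: the paper gives no proof of \lemref{L5}, pointing instead to \cite[Tag0CCA]{StackProject} after remarking that $X\mapsto X^{(p)}$ is the base change functor along $\Phi_{S}$, and your verification via the universal property of $Y^{(p)}=Y\times_{S,\Phi_{S}}S$ — checking the $\textrm{pr}_{Y}$-component with \lemref{L4} and the $\textrm{pr}_{S}$-component using that $f$, $\Phi_{X/S}$, $\Phi_{Y/S}$, $f^{(p)}$ are $S$-morphisms — is exactly the standard proof behind that reference. Your closing remark that the square is the naturality square of the relative Frobenius matches the paper's own framing, so there is nothing to add.
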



\section{Perfect algebraic spaces}\label{B2}
In this section, we extend the notion of perfect schemes to the setting of algebraic spaces. We will stick with the terminologies and notations of \cite[Tag025Y]{StackProject} for algebraic spaces. From now on, $S$ will always be a fixed base scheme contained in the big fppf site ${\rm Sch}_{fppf}$.

\begin{definition}\label{D1}
Let $F$ be an algebraic space over $S$.
\begin{itemize}
\item[(1)]
We say that $F$ is perfect if there exists a surjective \'{e}tale map $h_{U}\rightarrow F$, where $U\in{\rm{Ob}}(({\rm Sch}/S)_{fppf})$ is a perfect scheme.
\item[(2)]
We say that $F$ is quasi-perfect if there exist perfect schemes $U,V\in{\rm{Ob}}(({\rm Sch}/S)_{fppf})$ such that for any $a\in F(U),b\in F(V)$, the functor $h_{V}\times_{a,F,b}h_{U}$ is represented by a perfect scheme $W\in{\rm{Ob}}(({\rm Sch}/S)_{fppf})$.
\item[(3)]
We say that $F$ is semiperfect if there exist $U,V\in{\rm{Ob}}(({\rm Sch}/S)_{fppf})$ such that for any $a\in F(U),b\in F(V)$, the functor $h_{V}\times_{a,F,b}h_{U}$ is represented by a perfect scheme $W\in{\rm{Ob}}(({\rm Sch}/S)_{fppf})$.
\item[(4)]
We say that $F$ is strongly perfect if for every perfect schemes $U,V\in{\rm{Ob}}(({\rm Sch}/S)_{fppf})$ and any $a\in F(U),b\in F(V)$, the fibre product $h_{U}\times_{a,F,b}h_{V}$ is represented by a perfect scheme $W\in{\rm{Ob}}(({\rm Sch}/S)_{fppf})$.
\end{itemize}
\end{definition}
\begin{remark}
Note that in (2), (3) and (4) above, one might take $U=V$.
\end{remark}

The following lemma shows that \defref{D1} does generalize the notion of perfect scheme.

\begin{lemma}\ \label{A3}

{\rm (1)} A perfect scheme is a perfect algebraic space. More precisely, given a perfect scheme $T\in{\rm{Ob}}(({\rm Sch}/S)_{fppf})$, the representable functor $h_{T}$ is a perfect algebraic space.

{\rm (2)} A perfect scheme is a quasi-perfect algebraic space. More precisely, given a perfect scheme $T\in{\rm{Ob}}(({\rm Sch}/S)_{fppf})$, the representable functor $h_{T}$ is a quasi-perfect algebraic space.

{\rm (3)} A perfect scheme is a semiperfect algebraic space. More precisely, given a perfect scheme $T\in{\rm{Ob}}(({\rm Sch}/S)_{fppf})$, the representable functor $h_{T}$ is a semiperfect algebraic space.

{\rm (4)} A perfect scheme is a strongly perfect algebraic space. More precisely, given a perfect scheme $T\in{\rm{Ob}}(({\rm Sch}/S)_{fppf})$, the representable functor $h_{T}$ is a strongly perfect algebraic space.

In other words, every algebraic space that is represented by a perfect scheme is perfect. And every representable algebraic space is quasi-perfect, semiperfect, and strongly perfect.
\end{lemma}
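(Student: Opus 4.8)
The whole statement reduces to two standard inputs. The first is that the Yoneda embedding $X\mapsto h_X$ carries fibre products of schemes to fibre products of sheaves, so that for morphisms $a\colon U\to T$ and $b\colon V\to T$ one has $h_U\times_{a,h_T,b}h_V\cong h_{U\times_{a,T,b}V}$, together with the fact that the big fppf site is closed under fibre products, so $U\times_T V\in\Ob(({\rm Sch}/S)_{fppf})$ whenever $U,V,T$ are. The second is Proposition~\ref{A1}: a fibre product of two perfect schemes of characteristic $p$ over an \emph{arbitrary} base scheme is again a perfect scheme of characteristic $p$. I would also first record, from the standard theory of algebraic spaces, that $h_T$ is an algebraic space over $S$ for every scheme $T$ in the site --- it is an fppf sheaf, its diagonal $h_T\to h_T\times h_T=h_{T\times_S T}$ is the representable morphism induced by the immersion $\Delta_{T/S}$, and $h_{\mathrm{id}_T}\colon h_T\to h_T$ is an \'{e}tale cover.

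Granting this, the four parts are short. For (1): if $T$ is a perfect scheme then $\mathrm{id}_T\colon T\to T$ is surjective and \'{e}tale with perfect source, so $h_{\mathrm{id}_T}$ exhibits $h_T$ as a perfect algebraic space in the sense of Definition~\ref{D1}(1). For (2) and (3): these definitions ask only for \emph{some} witnessing pair $U,V$ --- required perfect in (2), arbitrary in (3) --- and I would take $U=V=T$, which is legitimate in (2) precisely because $T$ is perfect by hypothesis. Then for any $a,b\in h_T(T)$, i.e. any endomorphisms $a,b\colon T\to T$, the relevant fibre product is represented by $T\times_{b,T,a}T$, a perfect scheme of characteristic $p$ by Proposition~\ref{A1}; hence $h_T$ is quasi-perfect and semiperfect. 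For (4): here the quantifier over $U,V$ is universal, so I take arbitrary perfect schemes $U,V\in\Ob(({\rm Sch}/S)_{fppf})$ and arbitrary $a\colon U\to T$, $b\colon V\to T$; then the relevant fibre product is represented by $U\times_{a,T,b}V$, which is perfect of characteristic $p$ by Proposition~\ref{A1}, and this is exactly the condition in Definition~\ref{D1}(4).

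Finally, for the concluding assertion --- that every representable algebraic space $F=h_T$, now with $T$ an arbitrary scheme in the site, is quasi-perfect, semiperfect, and strongly perfect --- the arguments for (2)--(4) go through unchanged, since Proposition~\ref{A1} imposes no condition on the base $T$; the only extra point is that the existential cases (2) and (3) require exhibiting some perfect scheme in $({\rm Sch}/S)_{fppf}$ to feed into the definition, and the empty scheme $\emptyset$ always works (vacuously of characteristic $p$, with identity Frobenius), so one may take $U=V=\emptyset$ and conclude as before. I do not anticipate a genuine obstacle: the only care points are checking that $h_T$ is an algebraic space at all (immediate from $\Delta_{T/S}$ being an immersion), remembering that the witnessing schemes in Definition~\ref{D1}(3) need not be perfect, and noting that $\emptyset$ supplies the perfect scheme needed in the general representable case.
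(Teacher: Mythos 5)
Your proposal is correct and follows essentially the same route as the paper: the identity map $h_T\to h_T$ as a surjective \'etale atlas for (1), and the combination of $h_U\times_{h_T}h_V\cong h_{U\times_T V}$ with Proposition~\ref{A1} for (2)--(4) and the representable case. The only difference is cosmetic --- you treat the existential witnesses (taking $U=V=T$, and the empty scheme for a general representable $F$) more explicitly than the paper, which simply deduces (2)--(4) at once from strong perfection.
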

\begin{proof}
By \cite[Tag025Z]{StackProject}, the functor $h_{T}$ is an algebraic space. Since $T$ is a perfect scheme and the identity map $h_{T}\rightarrow h_{T}$ is surjective \'{e}tale, $h_{T}$ is perfect. This proves (1).

Let $U,V\in{\rm{Ob}}(({\rm Sch}/S)_{fppf})$ be perfect schemes and $\xi\in h_{T}(U),\xi'\in h_{T}(V)$. Then we have $h_{V}\times_{h_{T}}h_{U}=h_{V\times_{T}U}$. By Proposition \ref{A1}, $V\times_{T}U$ is perfect. Hence, $h_{T}$ is strongly perfect. This proves (2), (3) and (4).
\end{proof}



Clearly, the category $\textrm{Perf}_{AP_{S}}$ of perfect algebraic spaces over $S$ and the category $\textrm{SPerf}_{AP_{S}}$ of semiperfect algebraic spaces over $S$ are full subcategories of the category $AP_{S}$ of algebraic spaces over $S$. And the category $\textrm{QPerf}_{AP_{S}}$ of quasi-perfect algebraic spaces over $S$ is a full subcategory of $\textrm{SPerf}_{AP_{S}}$. Moreover, the category $\textrm{StPerf}_{AP_{S}}$ of strongly perfect algebraic spaces over $S$ is the full subcategory of $\textrm{QPerf}_{AP_{S}}$ and $\textrm{SPerf}_{AP_{S}}$.

Hence, we have the following full embedding
\begin{align}
\textrm{Perf}_{AP_{S}}\subset AP_{S},\label{A22}
\end{align}
together with a string of full embeddings
\begin{align}
\textrm{StPerf}_{AP_{S}}\subset \textrm{QPerf}_{AP_{S}}\subset \textrm{SPerf}_{AP_{S}}\subset AP_{S}.  \label{I1}
\end{align}

Let $\widehat{{\rm{Perf}}_{AP_{S}}}$ (resp. $\widehat{{\rm{StPerf}}_{AP_{S}}}$, $\widehat{{\rm{QPerf}}_{AP_{S}}}$, $\widehat{{\rm{SPerf}}_{AP_{S}}}$) be the subcategory of ${\rm{Perf}}_{AP_{S}}$ (resp. ${\rm{StPerf}}_{AP_{S}}$, ${\rm{QPerf}}_{AP_{S}}$, ${\rm{SPerf}}_{AP_{S}}$) of representable perfect (resp. strongly perfect, quasi-perfect, semiperfect) algebraic spaces over $S$.
%
%
%
%
%
%
It follows from Lemma \ref{A3} that we have the following full embeddings
\begin{align}
\widehat{{\rm{Perf}}_{AP_{S}}}\subset\widehat{{\rm{StPerf}}_{AP_{S}}}\subset\widehat{{\rm{QPerf}}_{AP_{S}}}\subset\widehat{{\rm{SPerf}}_{AP_{S}}}.
\end{align}

In the following, we give an interesting lemma of homological algebra that transfers fibre products. It will be useful in the sequel.
\begin{lemma}\label{A5}
Let $\mathcal{C}$ be a category and let $\alpha:F\rightarrow H$ and $\beta:G\rightarrow H$ be morphisms in $\mathcal{C}$. Assume that the fibre product $F\times_{H}G$ exists and we are given a monomorphism $\gamma:H\hookrightarrow W$. Then we have $F\times_{H}G=F\times_{W}G$. More precisely, the fibre product of $\alpha$ and $\beta$ is the fibre product of $\gamma\circ \alpha$ and $\gamma\circ \beta$.
\end{lemma}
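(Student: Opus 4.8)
The plan is to unwind the universal property of the fibre product and exploit the fact that a monomorphism is exactly a morphism that can be cancelled on the left. Write $P=F\times_H G$ with structure projections $p\colon P\to F$ and $q\colon P\to G$, so that $\alpha\circ p=\beta\circ q$ and $(P,p,q)$ is terminal among triples $(Z,u,v)$ with $u\colon Z\to F$, $v\colon Z\to G$ satisfying $\alpha\circ u=\beta\circ v$. The goal is to show that the very same triple $(P,p,q)$ is terminal among triples $(Z,u,v)$ with $(\gamma\circ\alpha)\circ u=(\gamma\circ\beta)\circ v$; this is precisely the assertion that $F\times_W G$ exists and coincides, with its projections, with $F\times_H G$.

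First I would check that $(P,p,q)$ is at least a cone over the diagram $F\xrightarrow{\gamma\alpha}W\xleftarrow{\gamma\beta}G$: post-composing $\alpha\circ p=\beta\circ q$ with $\gamma$ yields $(\gamma\circ\alpha)\circ p=(\gamma\circ\beta)\circ q$. Next — and this is the only place the hypothesis on $\gamma$ is used — I would observe that the two classes of cones in fact coincide. If $(Z,u,v)$ satisfies $(\gamma\circ\alpha)\circ u=(\gamma\circ\beta)\circ v$, that is $\gamma\circ(\alpha\circ u)=\gamma\circ(\beta\circ v)$, then since $\gamma$ is a monomorphism we cancel it and obtain $\alpha\circ u=\beta\circ v$; the converse implication is trivial. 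Hence a triple is a cone over the $W$-diagram if and only if it is a cone over the $H$-diagram.

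Given this, the universal property transfers formally, and I would spell out its two halves separately. For the existence of a mediating morphism: any cone $(Z,u,v)$ over the $W$-diagram is a cone over the $H$-diagram, hence factors as $u=p\circ t$, $v=q\circ t$ for some $t\colon Z\to P$. For its uniqueness: if $t,t'\colon Z\to P$ both satisfy $p\circ t=p\circ t'=u$ and $q\circ t=q\circ t'=v$, then the uniqueness clause in the universal property of $P=F\times_H G$ forces $t=t'$. Therefore $(P,p,q)$ satisfies the universal property of $F\times_W G$, which proves the lemma.

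I do not expect a genuine obstacle here: the statement is a soft categorical fact whose proof is a short diagram chase, and its only substantive input is the left-cancellability of $\gamma$. The one point to handle carefully is to treat existence and uniqueness of the mediating morphism as two distinct things that are separately inherited, rather than invoking "uniqueness of the fibre product" as a black box.
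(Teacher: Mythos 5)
Your proposal is correct and follows essentially the same route as the paper's proof: both post-compose with $\gamma$ to see that $(F\times_H G,p_1,p_2)$ is a cone over the $W$-diagram, then use left-cancellability of the monomorphism $\gamma$ to reduce any cone condition $\gamma\circ\alpha\circ u=\gamma\circ\beta\circ v$ to $\alpha\circ u=\beta\circ v$ and invoke the universal property of $F\times_H G$. Your write-up is in fact slightly cleaner in explicitly separating existence and uniqueness of the mediating morphism, but the underlying argument is the same.
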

\begin{proof}
Consider the following commutative pull-back diagram
$$
\xymatrix{
  Q \ar@/_/[ddr]_{f_{2}} \ar@/^/[drr]^{f_{1}}
    \ar@{.>}[dr]|-{\xi}                   \\
   & F\times_{H}G \ar[d]^{p_{2}} \ar[r]_{p_{1}}
                      & F \ar[d]_{\alpha}    \\
   & G \ar[r]^{\beta}     & H               }
$$
It is easy to see that the monomorphism $\gamma:H\hookrightarrow W$ induces a commutative diagram
$$
\xymatrix{
  F\times_{H}G \ar[d]_{p_{2}} \ar[r]^{p_{1}} & F \ar[d]^{\gamma\circ \alpha} \\
  G \ar[r]^{\gamma\circ \beta} & W   }
$$
We claim that $F\times_{H}G$ is also the fibre product of $\gamma\circ \alpha$ and $\gamma\circ \beta$ indeedly. For every $Q\in\textrm{Ob}(\mathcal{C})$ with $f_{1}'\in\textrm{Hom}_{\mathcal{C}}(Q,F)$ and $f_{2}'\in\textrm{Hom}_{\mathcal{C}}(Q,G)$ such that $\gamma\circ \alpha \circ  f_{1}'=\gamma \circ \alpha \circ  f_{2}'$, we have $\alpha \circ  f_{1}'=\alpha \circ  f_{2}'$ since $\gamma$ is a monomorphism. Thus, there exists a unique morphism $\xi':Q\rightarrow F\times_{H}G$ such that $p_{1}\circ  \xi'=f_{1}'$ and $p_{2}\circ  \xi'=f_{2}'$. Therefore, $F\times_{H}G=F\times_{W}G$.
\end{proof}

Next, we will show that in certain cases perfect algebraic spaces are stable under fibre products.
\begin{proposition}\label{A4}
Let $F,G$ be algebraic spaces over $S$. Let $H$ be a sheaf on $({\rm Sch}/S)_{fppf}$ whose diagonal morphism is representable. Let $F\rightarrow H,G\rightarrow H$ be maps of sheaves. Suppose that there exist perfect schemes $U,V\in\ObSchS$ and surjective \'{e}tale maps $h_{U}\rightarrow F,h_{V}\rightarrow G$ such that $h_{U}\times_{H}h_{V}$ is represented by a perfect scheme. If $F,G$ are perfect, then the fibre product $F\times_{H}G$ is a perfect algebraic space.
\end{proposition}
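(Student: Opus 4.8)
The plan is to exhibit a surjective \'etale cover of $F\times_{H}G$ by a perfect scheme and then invoke \defref{D1}(1). The natural candidate is the morphism $h_{U}\times_{H}h_{V}\to F\times_{H}G$ induced by the given covers $h_{U}\to F$, $h_{V}\to G$ and the structure maps to $H$ (via $h_{U}\to F\to H$ and $h_{V}\to G\to H$). By hypothesis $h_{U}\times_{H}h_{V}$ is represented by a perfect scheme $W$, so it suffices to establish two things: first that $F\times_{H}G$ is an algebraic space, and second that $h_{W}\cong h_{U}\times_{H}h_{V}\to F\times_{H}G$ is surjective \'etale.

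For the surjective \'etaleness I would factor the morphism as
$$
h_{U}\times_{H}h_{V}\;\longrightarrow\;h_{U}\times_{H}G\;\longrightarrow\;F\times_{H}G,
$$
using the identifications $h_{U}\times_{F}(F\times_{H}G)=h_{U}\times_{H}G$ and $(h_{U}\times_{H}G)\times_{G}h_{V}=h_{U}\times_{H}h_{V}$. The second arrow is then the base change of the surjective \'etale map $h_{U}\to F$ along the projection $F\times_{H}G\to F$, and the first arrow is the base change of the surjective \'etale map $h_{V}\to G$ along the projection $h_{U}\times_{H}G\to G$. Since surjective \'etale morphisms are stable under base change and composition, the composite is surjective \'etale.

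It remains to see that $F\times_{H}G$ is an algebraic space. It is an fppf sheaf on $\SchS$, being a fibre product of such. Its diagonal is representable because $\Delta_{H}$ is: the projection $F\times_{H}G\to F\times_{S}G$ is the base change of $\Delta_{H}\colon H\to H\times_{S}H$ along the map $F\times_{S}G\to H\times_{S}H$, hence representable, and $F\times_{S}G$ is an algebraic space. Together with the \'etale cover from the previous paragraph, this shows $F\times_{H}G$ is an algebraic space; equivalently, one checks directly that
$$
h_{W}\times_{F\times_{H}G}h_{W}\;\cong\;(h_{U}\times_{F}h_{U})\times_{H}(h_{V}\times_{G}h_{V})
$$
is a scheme, using representability of $\Delta_{F}$, $\Delta_{G}$ and $\Delta_{H}$, and then applies the standard criterion for a sheaf to be an algebraic space (see \cite{StackProject}). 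Granting this, $h_{W}\to F\times_{H}G$ is a surjective \'etale map from a perfect scheme, so $F\times_{H}G$ is perfect by \defref{D1}(1).

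I expect the only delicate point to be organizational rather than conceptual: one must verify that the candidate covering morphism really is surjective \'etale (hence the splitting into two successive base changes) and that $F\times_{H}G$ genuinely is an algebraic space before one is entitled to call it \emph{perfect} in the sense of \defref{D1}. Both of these hinge on the representability of $\Delta_{H}$ hypothesized in the statement, together with the stability of surjective and of \'etale morphisms under base change and composition; no further input is needed, and in particular \proref{A1} is not invoked here, since the perfectness of $W$ is built into the hypotheses.
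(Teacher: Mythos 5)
Your proposal is correct and follows essentially the same route as the paper: the paper likewise cites the Stacks Project criterion (Tag 04T9, resting on representability of $\Delta_{H}$) to see that $F\times_{H}G$ is an algebraic space, factors $h_{U}\times_{H}h_{V}\rightarrow h_{U}\times_{H}G\rightarrow F\times_{H}G$ as a composition of base changes of the given surjective \'etale covers, and concludes from the hypothesis that $h_{U}\times_{H}h_{V}\simeq h_{W}$ for a perfect scheme $W$. Your extra remarks (the explicit identifications of the two arrows as base changes, and the observation that Proposition \ref{A1} is not needed) are consistent with, and slightly more detailed than, the paper's argument.
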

\begin{proof}
By virtue of \cite[Tag04T9]{StackProject}, $F\times_{H}G$ is an algebraic space. Note that the morphism $h_{U}\times_{H}h_{V}\rightarrow F\times_{H}G$ as the composition of base changes of $h_{U}\times_{H}h_{V}\rightarrow h_{U}\times_{H}G$ and $h_{U}\times_{H}G\rightarrow F\times_{H}G$ is surjective \'{e}tale. By assumption, there exists perfect scheme $W\in\ObSchS$ such that $h_{U}\times_{H}h_{V}\simeq h_{W}$. Thus, $F\times_{H}G$ is perfect.
\end{proof}

Applying the \proref{A4}, we obtain the following proposition.
\begin{proposition}\label{A7}
Let $F\rightarrow H$ and $G\rightarrow H$ be morphisms of algebraic spaces over $S$. Suppose that $H$ is quasi-perfect such that there exist perfect schemes $U,V\in\ObSchS$ and surjective \'{e}tale maps $h_{U}\rightarrow F,h_{V}\rightarrow G$ making $h_{U}\times_{H}h_{V}$ represented by a perfect scheme. If $F,G$ are perfect, then the fibre product $F\times_{H}G$ is a perfect algebraic space. If $H$ is also a perfect algebraic space, then $F\times_{H}G$ is a fibre product in the category ${\rm{Perf}}_{AP_{S}}$ of perfect algebraic spaces over $S$.
\end{proposition}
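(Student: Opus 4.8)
The plan is to reduce everything to Proposition~\ref{A4} and then add the final "fibre product in the category" claim. For the first assertion, I would simply observe that the hypotheses of this proposition are a special case of those of Proposition~\ref{A4}: here $H$ is an algebraic space, hence in particular a sheaf on $(\mathrm{Sch}/S)_{fppf}$ whose diagonal is representable, and we are explicitly given perfect schemes $U,V$ together with surjective étale maps $h_U\to F$, $h_V\to G$ such that $h_U\times_H h_V$ is represented by a perfect scheme. Since $F,G$ are perfect, Proposition~\ref{A4} gives directly that $F\times_H G$ is a perfect algebraic space. (I would note that the quasi-perfectness of $H$ is not actually needed for this first conclusion — it only serves to make the represented-by-a-perfect-scheme hypothesis plausible/automatic in practice — but it does no harm to keep it.)

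For the second assertion, assume in addition that $H$ is a perfect algebraic space. We already know $F\times_H G$ is perfect, so it lies in $\mathrm{Perf}_{AP_S}$; the only thing to check is that it still satisfies the universal property of the fibre product when we restrict test objects to perfect algebraic spaces. But this is immediate: $\mathrm{Perf}_{AP_S}$ is a \emph{full} subcategory of $AP_S$ by the embedding \eqref{A22}, and $F\times_H G$ together with its two projections already satisfies the universal property against \emph{all} algebraic spaces. Restricting the class of test objects to the full subcategory $\mathrm{Perf}_{AP_S}$ cannot destroy existence or uniqueness of the factorization, so $(F\times_H G, p_1, p_2)$ is a fibre product of $F\to H\leftarrow G$ in $\mathrm{Perf}_{AP_S}$ as well. (One may also phrase this as: the inclusion $\mathrm{Perf}_{AP_S}\hookrightarrow AP_S$ reflects limits over diagrams whose vertices and computed limit all lie in $\mathrm{Perf}_{AP_S}$, which holds for any full subcategory.)

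The main — and really the only — obstacle is bookkeeping: making sure the hypotheses line up verbatim with those of Proposition~\ref{A4}, in particular that the given étale covers $h_U\to F$ and $h_V\to G$ are genuinely \emph{surjective} étale and that $h_U\times_H h_V$ is the fibre product over $H$ (not over $F$ or $G$), so that Proposition~\ref{A4} applies without modification. Everything else is formal. I would therefore keep the proof short: invoke Proposition~\ref{A4} for the first statement, and invoke fullness of $\mathrm{Perf}_{AP_S}\subset AP_S$ (equation \eqref{A22}) together with the universal property already established for the second.
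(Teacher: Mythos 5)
Your proposal is correct and follows essentially the same route as the paper: the first assertion is reduced directly to Proposition~\ref{A4} (since an algebraic space $H$ has representable diagonal and the hypotheses line up verbatim), and the second follows from the fibre-product property in $AP_{S}$ together with fullness of the embedding $\mathrm{Perf}_{AP_{S}}\subset AP_{S}$. Your side remark that the quasi-perfectness of $H$ is not actually used in the first conclusion is also accurate and consistent with how the paper's proof proceeds.
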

\begin{proof}
The proof follows from the stronger Proposition \ref{A4} that $F\times_{H}G$ is a perfect algebraic space. Furthermore, by \cite[Tag04T9]{StackProject}, $F\times_{H}G$ is a fibre product in the category of algebraic spaces. Then it is clear that if $H$ is also perfect, then $F\times_{H}G$ comes as a fibre product in ${\rm{Perf}}_{AP_{S}}$ since that is a full subcategory of the category of algebraic spaces.
\end{proof}

Similarly, one might be desirable to show that the same property holds for the category $\textrm{QPerf}_{AP_{S}}$ of quasi-perfect algebraic spaces, the category $\textrm{SPerf}_{AP_{S}}$ of semiperfect algebraic spaces, and the category $\textrm{StPerf}_{AP_{S}}$ of strongly perfect algebraic spaces.

\begin{proposition}\label{P6}
Let $F,G$ be algebraic spaces over $S$ and let $H$ be a sheaf on $({\rm Sch}/S)_{fppf}$ whose diagonal morphism is representable. Assume that $F\rightarrow H,G\rightarrow H$ are morphisms of sheaves.
\begin{enumerate}
\item If $F,G$ are semiperfect, then the fibre product $F\times_{H}G$ is also a semiperfect algebraic space.
\item If $F,G$ are quasi-perfect, then the fibre product $F\times_{H}G$ is also a quasi-perfect algebraic space.
\item If $F,G$ are strongly perfect, then the fibre product $F\times_{H}G$ is also a strongly perfect algebraic space.
\end{enumerate}
\end{proposition}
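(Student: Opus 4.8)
The plan is to reduce all three assertions to a single fibre‑product identity, combined with \proref{A1}. Write $P := F \times_{H} G$. First, $P$ is an algebraic space: by \cite[Tag04T9]{StackProject}, the fibre product of two algebraic spaces over a sheaf with representable diagonal is again an algebraic space (this is the same input used in the proof of \proref{A4}). Next I record the identity that does the work. For $U, V \in \ObSchS$, a section $a \in P(U)$ is a pair $(a_{1}, a_{2})$ with $a_{1} \in F(U)$ and $a_{2} \in G(U)$ having the same image in $H(U)$, and similarly $b = (b_{1}, b_{2}) \in P(V)$; comparing $T$-points --- a $T$-point of the left-hand side below is a pair of morphisms $T \to U$, $T \to V$ whose images in $P(T)$ agree, i.e.\ whose images agree both in $F(T)$ and in $G(T)$ --- yields
$$
h_{U} \times_{a, P, b} h_{V} \;\cong\; \bigl( h_{U} \times_{a_{1}, F, b_{1}} h_{V} \bigr) \times_{h_{U} \times_{H} h_{V}} \bigl( h_{U} \times_{a_{2}, G, b_{2}} h_{V} \bigr).
$$
Moreover, since the diagonal of $H$ is representable, $h_{U} \times_{H} h_{V}$ is the base change of $\Delta_{H} \colon H \to H \times_{S} H$ along $h_{U} \times_{S} h_{V} = h_{U \times_{S} V} \to H \times_{S} H$, so it is represented by a scheme $Z$, and the projection $Z \to U \times_{S} V$ is a monomorphism.

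Granting this, part (3) is immediate. Let $U, V$ be arbitrary perfect schemes and $a, b$ arbitrary sections as above. Strong perfectness of $F$ and of $G$ makes the two outer factors of the displayed product representable by perfect schemes $W_{1}$ and $W_{2}$, whence
$$
h_{U} \times_{a, P, b} h_{V} \;\cong\; h_{W_{1}} \times_{h_{Z}} h_{W_{2}} \;\cong\; h_{W_{1} \times_{Z} W_{2}},
$$
using that Yoneda commutes with fibre products and that $W_{1} \times_{Z} W_{2}$ exists because $Z$ is a scheme. By \proref{A1} applied with base $Z$ --- or, if one prefers not to name $Z$, by first replacing the base $h_{Z}$ with $h_{U \times_{S} V}$ via \lemref{A5} and the monomorphism $h_{Z} \hookrightarrow h_{U \times_{S} V}$, and then applying \proref{A1} with base $U \times_{S} V$ --- the scheme $W_{1} \times_{Z} W_{2}$ is perfect of characteristic $p$. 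Hence $P$ is strongly perfect. Parts (1) and (2) are the same computation once one fixes a pair $(U, V)$ witnessing semiperfectness (resp.\ quasi‑perfectness) of both $F$ and $G$: over such a pair the two outer factors are perfect schemes for every $a, b$, and \proref{A1} finishes exactly as before. A common witnessing pair always exists --- in the extreme $U = V = \emptyset$ works, since $h_{\emptyset} \times_{F} h_{\emptyset} = h_{\emptyset}$ is a perfect scheme for every algebraic space $F$ --- so (1) and (2) reduce to $P$ being an algebraic space, and the real substance of the proposition lies in (3).

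I expect no serious obstacle: the only ingredient not already in hand is \proref{A1}, and the reason the statement needs no hypothesis on $H$ beyond representability of its diagonal is precisely that $h_{U} \times_{H} h_{V}$ is then automatically a scheme, while \proref{A1} imposes no condition whatsoever on the base of the fibre product (contrast \proref{A4} and \proref{A7}, where one wants $F \times_{H} G$ itself to be perfect and therefore needs $h_{U} \times_{H} h_{V}$ to be perfect). The only point that wants a little care is the unwinding of the fibre‑product identity --- keeping straight that equality of sections in $P(T) = F(T) \times_{H(T)} G(T)$ amounts to the two separate equalities in $F(T)$ and in $G(T)$ --- and, for parts (1) and (2), being explicit about the choice of a common witnessing pair $(U, V)$.
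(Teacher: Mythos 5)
Your treatment of part (3) is correct and takes a genuinely different route from the paper. The paper defers the proof to \S\ref{B4} because it argues at the level of morphisms: by Theorem~\ref{C2}, strong (resp.\ quasi-, semi-) perfectness is detected on the diagonal, the diagonal of $F\times_{H}G$ sits in a Cartesian square as a base change of $\Delta_{F}\times\Delta_{G}$, and Propositions~\ref{P5} and~\ref{P4} (products and base change of quasi-/semi-/weakly perfect morphisms) then finish all three cases uniformly. You instead verify Definition~\ref{D1}(4) directly: the sectionwise identity $h_{U}\times_{a,F\times_{H}G,b}h_{V}\cong(h_{U}\times_{a_{1},F,b_{1}}h_{V})\times_{h_{U}\times_{H}h_{V}}(h_{U}\times_{a_{2},G,b_{2}}h_{V})$ is correct (finite limits of fppf sheaves are computed sectionwise), representability of $\Delta_{H}$ makes $h_{U}\times_{H}h_{V}$ a scheme $Z$, and \proref{A1} (if one prefers, after trading the base $h_{Z}$ for $h_{U\times_{S}V}$ via the monomorphism and \lemref{A5}) shows that $W_{1}\times_{Z}W_{2}$ is perfect. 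This proves (3) with only the tools of \S\ref{BB1}--\S\ref{B2}, which is a real simplification for that case.

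Parts (1) and (2), however, are not established. Your computation requires one and the same pair $(U,V)$ to witness semiperfectness (resp.\ quasi-perfectness) of both $F$ and $G$, since $a_{1},b_{1}$ and $a_{2},b_{2}$ must be sections over that common pair; Definition~\ref{D1}(2)--(3) only supplies a pair for $F$ and a possibly different pair for $G$, and nothing in your argument merges them. The fallback $U=V=\emptyset$ does not repair this: it rests on counting the empty scheme as a perfect scheme of characteristic $p$ (the zero ring has characteristic $1$, and the paper's notion of characteristic for algebraic spaces explicitly demands nonemptiness), and even granting that literal reading it proves far too much --- every fppf sheaf would then be semiperfect and quasi-perfect, so you would have exposed a degeneracy in the definitions rather than proved the intended statement. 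This existential-quantifier difficulty is exactly what the paper's detour through Theorem~\ref{C2} and Propositions~\ref{P4}--\ref{P5} is designed to sidestep: no common witnessing pair is ever chosen there. To salvage your approach for (1) and (2) you would need to show that a common nonempty witnessing pair can always be arranged --- which is not clear --- or else pass through the diagonal characterization as the paper does.
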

We defer the proof of \proref{P6} in the sequel until \S\ref{B4}. Applying the \proref{P6}, we obtain the desired results.
\begin{proposition}
Let $F\rightarrow H$ and $G\rightarrow H$ be morphisms of algebraic spaces over $S$.
\begin{enumerate}
\item If $F,G$ are semiperfect, then the fibre product $F\times_{H}G$ is also a semiperfect algebraic space. Furthermore, if $H$ is also a semiperfect algebraic space, then $F\times_{H}G$ is a fibre product in the category ${\rm{SPerf}}_{AP_{S}}$ of semiperfect algebraic spaces over $S$.
\item If $F,G$ are quasi-perfect, then the fibre product $F\times_{H}G$ is also a quasi-perfect algebraic space. Furthermore, if $H$ is also a quasi-perfect algebraic space, then $F\times_{H}G$ is a fibre product in the category ${\rm{QPerf}}_{AP_{S}}$ of quasi-perfect algebraic spaces over $S$.
\item If $F,G$ are strongly perfect, then the fibre product $F\times_{H}G$ is also a strongly perfect algebraic space. Furthermore, if $H$ is also a strongly perfect algebraic space, then $F\times_{H}G$ is a fibre product in the category ${\rm{StPerf}}_{AP_{S}}$ of strongly perfect algebraic spaces over $S$.
\end{enumerate}
\end{proposition}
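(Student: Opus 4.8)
The plan is to deduce this proposition from \proref{P6} in exactly the way that \proref{A7} was deduced from \proref{A4}. The key observation is that an algebraic space $H$ over $S$ is, by definition, an fppf sheaf on $({\rm Sch}/S)_{fppf}$ whose diagonal is representable; hence it satisfies the hypothesis imposed on the target sheaf in \proref{P6}. So the first step is simply to invoke \proref{P6} for the given morphisms $F\rightarrow H$ and $G\rightarrow H$: if $F,G$ are semiperfect (resp. quasi-perfect, strongly perfect), then the fibre product $F\times_{H}G$ of sheaves is a semiperfect (resp. quasi-perfect, strongly perfect) algebraic space. This already establishes the first assertion in each of (1), (2) and (3).

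For the ``furthermore'' clauses I would argue formally. By \cite[Tag04T9]{StackProject}, the sheaf-theoretic fibre product $F\times_{H}G$ is an algebraic space and is the fibre product of $F\rightarrow H\leftarrow G$ in the category $AP_{S}$ of algebraic spaces over $S$. Now assume in addition that $H$ is semiperfect (resp. quasi-perfect, strongly perfect). Then all four of $F$, $G$, $H$, $F\times_{H}G$ are objects of the full subcategory $\textrm{SPerf}_{AP_{S}}$ (resp. $\textrm{QPerf}_{AP_{S}}$, $\textrm{StPerf}_{AP_{S}}$): the first three by hypothesis, and $F\times_{H}G$ by the previous paragraph. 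Since these subcategories are full, any test object $Q$ in such a subcategory equipped with compatible morphisms $Q\rightarrow F$ and $Q\rightarrow G$ is in particular a test object in $AP_{S}$, so the universal property of $F\times_{H}G$ in $AP_{S}$ supplies the unique factorization $Q\rightarrow F\times_{H}G$, and that factorization automatically lies in the full subcategory. Hence $F\times_{H}G$ serves as a fibre product inside $\textrm{SPerf}_{AP_{S}}$ (resp. $\textrm{QPerf}_{AP_{S}}$, $\textrm{StPerf}_{AP_{S}}$), which is the remaining assertion.

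The only substantive ingredient is \proref{P6} itself, whose proof is deferred to \S\ref{B4}; granting it, the argument above is entirely formal, and I do not expect any genuine obstacle. The passage from ``sheaf with representable diagonal'' to ``algebraic space'' is immediate, and the descent of a universal property along a full embedding of categories is routine. In the write-up I would keep it short, mirroring the proof of \proref{A7} almost verbatim and handling the three cases in parallel.
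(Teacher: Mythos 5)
Your proposal is correct and follows essentially the same route as the paper: both deduce the first assertions from the stronger \proref{P6} (an algebraic space qualifies as a sheaf with representable diagonal) and then obtain the ``furthermore'' clauses from the fact that $\textrm{SPerf}_{AP_{S}}$, $\textrm{QPerf}_{AP_{S}}$, and $\textrm{StPerf}_{AP_{S}}$ are full subcategories of $AP_{S}$, so the universal property in $AP_{S}$ restricts. Your write-up spells out the fullness argument slightly more explicitly than the paper does, but there is no substantive difference.
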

\begin{proof}
The proof of (1) follows from the stronger Proposition \ref{P6} that $F\times_{H}G$ is a semiperfect algebraic space. Then it is clear that if $H$ is also semiperfect, $F\times_{H}G$ comes as a fibre product in ${\rm{SPerf}}_{AP_{S}}$ since that is a full subcategory of the category $AP_{S}$ of algebraic spaces over $S$. Similarly, we can obtain (2) and (3).
\end{proof}

Let $k$ be a fixed perfect field of characteristic $p$. The following proposition generalizes Proposition \ref{A13}.
\begin{proposition}
Let $X$ be an algebraic space over $S$. If $X$ is \'{e}tale over $k$, then $X$ is perfect.
\end{proposition}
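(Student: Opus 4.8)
The plan is to reduce the statement to \proref{A13} by combining the \'etale structure morphism of $X$ with an \'etale presentation of $X$ as an algebraic space.

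First I would invoke the definition of an algebraic space: there is a scheme $U\in\ObSchS$ together with a surjective \'etale morphism $h_{U}\rightarrow X$. By hypothesis the structure morphism $X\rightarrow h_{\textrm{Spec}\,k}$ is \'etale (here $k$ is regarded as a scheme over $S$). Since \'etale morphisms of algebraic spaces are stable under composition, the composite $h_{U}\rightarrow X\rightarrow h_{\textrm{Spec}\,k}$ is \'etale; because $h_{U}$ and $h_{\textrm{Spec}\,k}$ are representable, this composite is the functor of an \'etale morphism of schemes $U\rightarrow\textrm{Spec}\,k$. In other words $U$ is a $k$-scheme that is \'etale over $k$.

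Next, \proref{A13} applied to this $U$ shows that $U$ is a perfect scheme. Thus $h_{U}\rightarrow X$ is a surjective \'etale map from the representable functor of a perfect scheme $U\in\ObSchS$, and \defref{D1}(1) immediately yields that $X$ is a perfect algebraic space.

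I do not anticipate a real obstacle here: the argument is essentially a one-line reduction. The only points needing a word of justification are the stability of \'etaleness under composition for morphisms of algebraic spaces and the passage from the composite $h_{U}\rightarrow h_{\textrm{Spec}\,k}$ to an honest \'etale morphism of schemes, both of which are standard (\cite{StackProject}). One could alternatively observe that $\textrm{Spec}\,k$ is itself a perfect scheme and that $X$ is an algebraic space \'etale over it, but reducing directly to \proref{A13} on the presentation $U$ is the shortest route.
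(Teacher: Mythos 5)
Your argument is correct and is essentially the paper's own proof: choose an \'etale presentation $h_{U}\rightarrow X$, compose with the \'etale structure morphism to get an \'etale morphism of schemes $U\rightarrow\textrm{Spec}(k)$, apply Proposition \ref{A13} to conclude $U$ is perfect, and then invoke Definition \ref{D1}(1). The only difference is that you spell out the standard justifications (composition of \'etale morphisms, representability of the composite) which the paper leaves implicit.
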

\begin{proof}
Choose a surjective \'{e}tale map $h_{U}\rightarrow X$ for $U\in\Ob(Sch/k)$. This gives rise to an \'{e}tale morphism $U\rightarrow \textrm{Spec}(k)$ as the composition of \'{e}tale morphisms. Then it follows from Proposition \ref{A13} that $U$ is perfect. Thus, $X$ is perfect.
\end{proof}

For the relation between \'{e}tale morphisms and perfect algebraic spaces, we will make use of the result in Proposition \ref{A13}, and give the following proposition which specifies that an \'{e}tale morphism in some cases would imply semiperfectness or quasi-perfectness.
\begin{proposition}
Let $f:F\rightarrow G$ be an \'{e}tale morphism of algebraic spaces over $S$. If the functor $h_{U}\times_{f\circ a,G,b}h_{V}$ is represented by an affine scheme ${\rm{Spec}}(k)$ for some $U,V\in{\rm{Ob}}(({\rm Sch}/S)_{fppf})$ and any $a\in F(U),b\in G(V)$, then $F$ is semiperfect. In particular, if $U,V\in{\rm{Ob}}(({\rm Sch}/S)_{fppf})$ are perfect schemes, then $F$ is quasi-perfect.
\end{proposition}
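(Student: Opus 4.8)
The plan is to control the fibre products required by \defref{D1}(3) by transporting them to fibre products over $G$, where the hypothesis applies, exploiting that an \'{e}tale morphism has an open immersion for its relative diagonal. Fix once and for all the perfect field $k$ and the schemes $U,V\in\ObSchS$ furnished by the hypothesis. The first step is the purely formal identity
$$
h_{U}\times_{a,F,b}h_{V}\;\cong\;\bigl(h_{U}\times_{f\circ a,\,G,\,f\circ b}h_{V}\bigr)\times_{F\times_{G}F,\;\Delta_{f}}F,
$$
valid for all $a\in F(U),\,b\in F(V)$, where $\Delta_{f}\colon F\to F\times_{G}F$ is the relative diagonal of $f$; this is a routine check on $T$-points, both sides classifying pairs of morphisms $u\colon h_{T}\to h_{U}$, $v\colon h_{T}\to h_{V}$ with $a\circ u=b\circ v$ in $F(T)$.

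Next I would use that $f$ is \'{e}tale, hence unramified, so that $\Delta_{f}$ is an open immersion. Applying the hypothesis with $b$ replaced by $f\circ b\in G(V)$ — permissible since the hypothesis is assumed for \emph{every} element of $G(V)$ — gives $h_{U}\times_{f\circ a,G,f\circ b}h_{V}\cong{\rm Spec}(k)$. Base changing the open immersion $\Delta_{f}$ along the induced morphism ${\rm Spec}(k)\to F\times_{G}F$, the displayed identity identifies $h_{U}\times_{a,F,b}h_{V}$ with an open subscheme of ${\rm Spec}(k)$. Since $k$ is a field, the only open subschemes of ${\rm Spec}(k)$ are $\emptyset$ and ${\rm Spec}(k)$ itself; both are perfect schemes — ${\rm Spec}(k)$ because $k$ is a perfect field (cf.\ \proref{A13}) and $\emptyset$ trivially — and both lie in $\ObSchS$, as the big fppf site is stable under passing to open subschemes.

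It follows that, for the fixed pair $U,V$ and every $a\in F(U),\,b\in F(V)$, the fibre product of $a\colon h_{U}\to F$ and $b\colon h_{V}\to F$ is represented by a perfect scheme lying in $\ObSchS$; this is exactly the condition of \defref{D1}(3), so $F$ is semiperfect. If moreover $U$ and $V$ are perfect schemes, the same verification matches the stronger requirement of \defref{D1}(2), hence $F$ is quasi-perfect. The only genuinely non-routine input I anticipate is the combination of the reduction identity with the fact that an \'{e}tale morphism of algebraic spaces has open-immersion diagonal; granting these, everything is formal, resting only on the absence of nontrivial open subschemes of the spectrum of a field and on the perfectness of ${\rm Spec}(k)$ and of $\emptyset$.
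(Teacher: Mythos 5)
Your proposal is correct, but it follows a genuinely different route from the paper's. The paper fixes representing schemes $W\simeq h_{U}\times_{a,F,b}h_{V}$ and $W'\simeq h_{U}\times_{f\circ a,G,f\circ b}h_{V}$, looks at the induced morphism $W\rightarrow W'$ over $U$, invokes the cancellation property for \'{e}tale morphisms (\cite[Tag02GW]{StackProject}) to see that this morphism is \'{e}tale, and then applies Proposition \ref{A13} with $W'={\rm{Spec}}(k)$ to conclude that $W$ is perfect; note that this cancellation step presupposes that both projections $W\rightarrow U$ and $W'\rightarrow U$ are \'{e}tale, which is not automatic for arbitrary $a\in F(U)$, $b\in F(V)$. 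You instead use the standard ``magic square'' identity $h_{U}\times_{a,F,b}h_{V}\cong\bigl(h_{U}\times_{f\circ a,G,f\circ b}h_{V}\bigr)\times_{F\times_{G}F,\Delta_{f}}F$ together with the fact that an \'{e}tale (hence unramified) morphism of algebraic spaces has open-immersion diagonal, so the fibre product over $F$ becomes an open subscheme of ${\rm{Spec}}(k)$, i.e.\ $\emptyset$ or ${\rm{Spec}}(k)$, both perfect. This buys you two things: the only input from \'{e}taleness of $f$ is the openness of $\Delta_{f}$, so no \'{e}taleness of the projections to $U$ is needed, and you get the sharper conclusion that the representing scheme is either empty or all of ${\rm{Spec}}(k)$; the paper's route, by contrast, stays closer to Proposition \ref{A13} and avoids the diagonal characterization of unramified morphisms. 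Two small points to make explicit if you write this up: the empty scheme must be admitted as a perfect scheme (its Frobenius is trivially an isomorphism, so this is harmless), and one should say that the open subscheme is isomorphic to an object of $\SchS$ rather than literally an object of the chosen site --- both are cosmetic.
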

\begin{proof}
Let $U,V\in{\rm{Ob}}(({\rm Sch}/S)_{fppf})$ and $x\in F(U),y\in F(V),y'\in G(V)$. Choose isomorphisms $h_{W}\simeq h_{U}\times_{x,F,y}h_{V}$ and $h_{W'}\simeq h_{U}\times_{f\circ x,G,y'}h_{V}$ for $W,W'\in{\rm{Ob}}(({\rm Sch}/S)_{fppf})$. Then the compositions
$$
h_{W}\rightarrow h_{U}\times_{F}h_{V}\rightarrow h_{U}\ {\rm and}\ h_{W'}\rightarrow h_{U}\times_{G}h_{V}\rightarrow h_{U}
$$
come from a unique pair of morphisms of schemes $W\rightarrow U$ and $W'\rightarrow U$ by Yoneda lemma.
For every \'{e}tale map $\varphi:h_{V}\rightarrow F$, since $f$ is an \'{e}tale morphism of algebraic spaces over $S$, $f\circ\varphi:h_{V}\rightarrow G$ is also \'{e}tale. In other words, if the morphism of schemes $W\rightarrow U$ is \'{e}tale, then the morphism of schemes $W'\rightarrow U$ is also \'{e}tale. By \cite[Tag02GW]{StackProject}, if $g:W\rightarrow W'$ is a morphism of schemes over $U$, then $g$ is \'{e}tale. It follows from Proposition \ref{A13} that if $W'$ is the affine scheme $\textrm{Spec}(k)$, then $W$ as an \'{e}tale $k$-scheme is perfect. Hence, the functor $h_{U}\times_{F}h_{V}$ is represented by a perfect scheme so that $F$ is semiperfect. And since we may choose $U,V\in{\rm{Ob}}(({\rm Sch}/S)_{fppf})$ to be perfect, $F$ is also quasi-perfect.
\end{proof}

The following proposition shows that one can transfer perfectness from an algebraic space to another.
\begin{proposition}
Let $G$ be an algebraic space over $S$, $F$ be a sheaf on $({\rm Sch}/S)_{fppf}$, and $G\rightarrow F$ be a representable morphism of functors which is surjective \'{e}tale. If $G$ is perfect, then $F$ is a perfect algebraic space.
\end{proposition}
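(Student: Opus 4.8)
The plan is to check the two defining conditions in \defref{D1}(1): first that $F$ is an algebraic space over $S$, and then that it admits a surjective \'{e}tale cover by the representable functor of a perfect scheme. The second requirement will follow almost formally once we transport the \'{e}tale cover of $G$ along the map $G\to F$, so the substantive work is establishing that $F$ is an algebraic space.

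First I would argue that $F$ is an algebraic space over $S$. By hypothesis $F$ is an fppf sheaf on $({\rm Sch}/S)_{fppf}$, the source $G$ is an algebraic space, and $G\to F$ is representable (hence representable by algebraic spaces), surjective, and \'{e}tale. Representability of $G\to F$ forces the diagonal $\Delta_{F}\colon F\to F\times_{S}F$ to be representable: for a scheme $T$ with a pair of maps to $F$, one base changes the problem along the surjective \'{e}tale map $G\times_{F}T\to T$, where the two maps to $F$ become comparable through $G$, and descends the resulting representable locus. With $\Delta_{F}$ representable and a surjective \'{e}tale morphism $G\to F$ from an algebraic space in hand, the bootstrapping results of \cite{StackProject} (see \cite[Tag076M]{StackProject}) yield that $F$ is an algebraic space over $S$. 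I expect this to be the only genuine obstacle in the proof, and it is handled by citing the bootstrap machinery rather than by a direct construction; everything else is formal.

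It then remains to produce the required cover. Since $G$ is a perfect algebraic space, \defref{D1}(1) furnishes a perfect scheme $U\in\ObSchS$ together with a surjective \'{e}tale map $h_{U}\to G$. Composing with the given surjective \'{e}tale morphism $G\to F$ and using that both surjectivity and \'{e}taleness of morphisms of algebraic spaces are stable under composition, we obtain a surjective \'{e}tale map $h_{U}\to F$ with $U$ a perfect scheme. By \defref{D1}(1) this exhibits $F$ as a perfect algebraic space, which completes the argument.
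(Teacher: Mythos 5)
Your proposal is correct and follows essentially the same route as the paper: the paper simply cites \cite[Tag0BGR]{StackProject}, which is verbatim the statement that a sheaf admitting a representable surjective \'etale map from an algebraic space is an algebraic space, and then composes $h_{U}\rightarrow G\rightarrow F$ exactly as you do. Your separate sketch that $\Delta_{F}$ is representable is redundant once you invoke the bootstrap machinery (the full bootstrap theorem, or Tag 0BGR itself, already absorbs that step), but it does not affect the correctness of the argument.
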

\begin{proof}
It follows from \cite[Tag0BGR]{StackProject} that $F$ is an algebraic space. Since $G$ is perfect, we have a surjective \'{e}tale morphism $h_{U}\rightarrow G$ for a perfect scheme $U\in\textrm{Ob}(({\rm Sch}/S)_{fppf})$. Then as in \cite[Tag0BGR]{StackProject}, the composition $h_{U}\rightarrow G\rightarrow F$ is surjective \'{e}tale. Thus, $F$ is perfect.
\end{proof}

From \cite[Tag0BGQ]{StackProject}, we can weaken the conditions such that a sheaf to be a perfect algebraic space.
\begin{proposition}
Let $F$ be a sheaf on $({\rm Sch}/S)_{fppf}$ such that there is a perfect scheme $U\in {\rm{Ob}}(({\rm Sch}/S)_{fppf})$ and a representable map $h_{U}\rightarrow F$ which is surjective \'{e}tale. Then $F$ is a perfect algebraic space.
\end{proposition}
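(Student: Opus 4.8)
The plan is to deduce this statement from \cite[Tag0BGQ]{StackProject} in exactly the way the preceding proposition was deduced from \cite[Tag0BGR]{StackProject}, the only change being that the object furnishing the \'etale cover is now the scheme $U$ itself rather than an auxiliary algebraic space $G$. So there is really nothing to do beyond checking that the hypotheses line up and then unwinding \defref{D1}.

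First I would verify the input of \cite[Tag0BGQ]{StackProject}: by assumption $F$ is a sheaf on $({\rm Sch}/S)_{fppf}$, and we are handed a perfect scheme $U\in{\rm{Ob}}(({\rm Sch}/S)_{fppf})$ together with a representable map $h_{U}\rightarrow F$ which is surjective and \'etale. These are precisely the conditions of \cite[Tag0BGQ]{StackProject}, so that result applies and shows that $F$ is an algebraic space over $S$; in particular its diagonal is representable, and the given map $h_{U}\rightarrow F$ is a surjective \'etale cover of $F$ by a scheme.

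It then remains to invoke \defref{D1}(1): we already possess a surjective \'etale map $h_{U}\rightarrow F$ whose source $h_{U}$ is the representable functor of the perfect scheme $U$, which is exactly the defining condition for $F$ to be a perfect algebraic space. Hence $F$ is perfect. I do not expect any genuine obstacle here, since the substantive content is entirely contained in the bootstrap statement \cite[Tag0BGQ]{StackProject}; the only small point to keep in mind is that ``representable'' together with ``surjective \'etale'' for $h_{U}\rightarrow F$ is read in the sense that these properties are tested after base change along maps $h_{T}\rightarrow F$ from schemes $T$, and because the source $h_{U}$ is itself representable this reading matches both the hypotheses of \cite[Tag0BGQ]{StackProject} and the requirement in \defref{D1}(1), so no compatibility issue arises.
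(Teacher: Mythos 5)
Your argument is exactly the paper's intended one: the paper states this proposition without a written proof, citing \cite[Tag0BGQ]{StackProject} in the preceding sentence, which yields that $F$ is an algebraic space, and then perfectness follows immediately from Definition \ref{D1}(1) since the given cover $h_U\rightarrow F$ comes from a perfect scheme. No gaps; your proposal matches the paper's approach.
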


\section{Algebraic Frobenius morphisms}\label{B3}
In this section, we introduce the notion of algebraic Frobenius morphisms so that we can describe a perfect algebraic space $F$ in terms of the endomorphism $F\rightarrow F$.

\subsection{Algebraic Frobenius morphisms}\

As the usual cases, the Frobenius morphism of an algebraic space $F$ makes sense only when $F$ has characteristic $p$. So we need to formalize the characteristic of a given algebraic space.
\begin{definition}
Let $F$ be an algebraic space over $S$.
\begin{itemize}
\item[(1)]
We say that $F$ has characteristic $p$ if $F$ is nonempty and there exists a surjective \'{e}tale map $h_{U}\rightarrow F$ where $U$ has characteristic $p$.
\item[(2)]
$F$ is said to have characteristic $0$ if $F$ is nonempty and for every surjective \'{e}tale map $h_{U}\rightarrow F$ where $U\in {\rm{Ob}}(({\rm Sch}/S)_{fppf})$, $U$ does not have characteristic $p$.
\end{itemize}
We will use ${\rm{char}}(F)$ to indicate the characteristic of $F$.
\end{definition}
\begin{remark}
If $F$ is a scheme, then we say $F$ \textit{has weak characteristic} $p$ or $0$ to distinguish it with the usual case. Clearly, when $F$ is an $\mathbb{F}_{p}$-scheme, $F$ has weak characteristic $p$.
\end{remark}

In fact, the characteristic of an algebraic space is independent of the choice of \'{e}tale atlases.
\begin{lemma}\label{LL2}
Let $F$ be a nonempty algebraic space over $S$. Suppose that there are two surjective \'{e}tale maps $h_{U}\rightarrow F$ and $h_{V}\rightarrow F$ where $U,V\in {\rm{Ob}}(({\rm Sch}/S)_{fppf})$ have characteristics $p,q$, respectively. Then $p=q$.
\end{lemma}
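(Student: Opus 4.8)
The plan is to compare the two atlases through their fibre product $W := h_{U}\times_{F}h_{V}$ and to extract the characteristic from $W$. First I would note that since $F$ is an algebraic space its diagonal is representable, so the maps $h_{U}\to F$ and $h_{V}\to F$ are representable and the fibre product $W=h_{U}\times_{F}h_{V}$ is represented by a scheme, again denoted $W\in\ObSchS$. In the resulting Cartesian square, the projection $W\to U$ is the base change of the surjective \'{e}tale map $h_{V}\to F$ along $h_{U}\to F$, hence is surjective and \'{e}tale; symmetrically $W\to V$ is surjective and \'{e}tale.

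Next I would check that $W$ is nonempty. Because $F$ is nonempty and $h_{U}\to F$ is surjective, $U$ is nonempty; since $W\to U$ is surjective, $W$ is nonempty as well. (Alternatively: a geometric point of $F$ lifts, after a field extension, to a compatible pair of points of $U$ and $V$, i.e. to a point of $W$.)

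Then the characteristic hypotheses finish the argument. The morphism of schemes $W\to U$ together with the assumption that $U$ is an $\mathbb{F}_{p}$-scheme shows that $W$ is an $\mathbb{F}_{p}$-scheme, so $p=0$ in $\mathscr{O}_{W}$; symmetrically $q=0$ in $\mathscr{O}_{W}$. Picking any point $w\in W$, the local ring $\mathscr{O}_{W,w}$ is nonzero and satisfies $p\cdot 1=q\cdot 1=0$, so the characteristic of $\mathscr{O}_{W,w}$ divides both of the primes $p$ and $q$ and is different from $1$; hence $p=q$.

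The only place where any care is needed is the nonemptiness of $W$ --- this is exactly where the hypotheses that $F$ is nonempty and that the atlas maps are surjective get used; the rest is formal, relying only on representability of the diagonal of $F$ and on the stability of the property of being an $\mathbb{F}_{p}$-scheme under arbitrary morphisms of schemes.
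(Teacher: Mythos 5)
Your proof is correct and follows essentially the same route as the paper: form the fibre product $h_{U}\times_{F}h_{V}\simeq h_{W}$, observe that the two projections make the scheme $W$ simultaneously an $\mathbb{F}_{p}$-scheme and an $\mathbb{F}_{q}$-scheme, and conclude $p=q$. You are in fact slightly more careful than the paper, since you explicitly verify that $W$ is nonempty (via surjectivity of the atlas maps and nonemptiness of $F$), a point the paper's proof leaves implicit even though it is needed for the final step.
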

\begin{proof}
Consider the fibre product $h_{U}\times_{F}h_{V}\simeq h_{W}$ for some $W\in {\rm{Ob}}(({\rm Sch}/S)_{fppf})$. The projection $h_{W}\simeq h_{U}\times_{F}h_{V}\rightarrow h_{V}$ gives rise to a morphism of schemes $W\rightarrow V$ that makes $W$ to be an $\mathbb{F}_{q}$-scheme. Similarly, there is another morphism of schemes $W\rightarrow U$ that makes $W$ to be an $\mathbb{F}_{p}$-scheme. This shows that $p=q$ as desired.
\end{proof}

For the morphisms of algebraic spaces with different characteristics, we have the following lemma.
\begin{lemma}
Let $F\rightarrow G$ be a morphism of algebraic spaces over $S$. Suppose that $F,G$ have characteristics $p,q$, respectively. Then $p=q$.
\end{lemma}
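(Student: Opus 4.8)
The plan is to adapt the proof of Lemma~\ref{LL2}: rather than comparing two \'{e}tale atlases of a single algebraic space, we transport an atlas of $F$ into $G$ along the given morphism $\varphi\colon F\to G$. Since $F$ has characteristic $p$, pick a surjective \'{e}tale map $h_{U}\to F$ with $U$ an $\mathbb{F}_{p}$-scheme; since $G$ has characteristic $q$, pick a surjective \'{e}tale map $h_{V}\to G$ with $V$ an $\mathbb{F}_{q}$-scheme. Composing gives a map of sheaves $h_{U}\to F\to G$, which we can pair with $h_{V}\to G$.

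I would then form the fibre product $W:=h_{U}\times_{G}h_{V}$. As $G$ is an algebraic space with representable diagonal, the map $h_{V}\to G$ is representable, so its base change $W\to h_{U}$ along $h_{U}\to G$ is representable; hence $W$ is represented by a scheme in $({\rm Sch}/S)_{fppf}$, and by the Yoneda lemma the two projections come from morphisms of schemes $W\to U$ and $W\to V$. Moreover $W\to U$ is the base change of the surjective \'{e}tale map $h_{V}\to G$, so it is itself surjective \'{e}tale; and since $h_{U}\to F$ is surjective onto the nonempty $F$, the scheme $U$ is nonempty, hence so is $W$.

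Finally I would compute the characteristic of $W$ in two ways. The morphism $W\to U$ exhibits $W$ as a scheme over $\mathbb{F}_{p}$, so $p=0$ in $\mathscr{O}_{W}$; the morphism $W\to V$ exhibits $W$ as a scheme over $\mathbb{F}_{q}$, so $q=0$ in $\mathscr{O}_{W}$. Because $W$ is nonempty, $\mathscr{O}_{W}$ is not the zero ring, so two distinct primes cannot both vanish in it; therefore $p=q$.

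This is essentially a diagram chase, so I do not expect a real obstacle. The only points needing care are the representability of $W=h_{U}\times_{G}h_{V}$ and its nonemptiness, both of which follow from $h_{V}\to G$ being a surjective \'{e}tale atlas of the algebraic space $G$; after that the conclusion is immediate. One could instead produce an \'{e}tale atlas of $W$ and invoke Lemma~\ref{LL2}, but arguing directly with $\mathscr{O}_{W}$ is the shortest route.
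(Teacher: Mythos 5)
Your proof is correct, and it takes a mildly different route from the paper. The paper's own argument fixes an atlas $h_{V}\to G$ with $V$ of characteristic $q$ and invokes \cite[Tag02X1]{StackProject} to produce a scheme $U'$ with $U'\to F$ surjective \'{e}tale and a compatible morphism $U'\to V$; this makes $U'$ an $\mathbb{F}_{q}$-scheme, so $F$ acquires an \'{e}tale atlas in characteristic $q$, and the conclusion then follows from Lemma~\ref{LL2} (independence of the characteristic from the choice of atlas). You instead bypass both Tag~02X1 and Lemma~\ref{LL2} by forming $W=h_{U}\times_{G}h_{V}$ directly over $G$ — in effect re-running the proof of Lemma~\ref{LL2} with $G$ in place of $F$ — using that morphisms from schemes to an algebraic space are representable, that the base change $W\to U$ of the surjective \'{e}tale atlas $h_{V}\to G$ is surjective \'{e}tale, and then reading off $p=0=q$ in the nonzero ring $\mathscr{O}_{W}(W)$. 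What your version buys is self-containedness and an explicit treatment of the nonemptiness of $W$ (via $F$ nonempty $\Rightarrow$ $U$ nonempty $\Rightarrow$ $W$ nonempty), a point the paper leaves implicit in both proofs; what the paper's version buys is brevity, since it delegates the fibre-product bookkeeping to the cited Stacks lemma and to Lemma~\ref{LL2} already proved. Both arguments are sound.
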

\begin{proof}
Let $h_{V}\rightarrow G$ be a surjective \'{e}tale map where $V\in {\rm{Ob}}(({\rm Sch}/S)_{fppf})$ has characteristic $q$. By \cite[Tag02X1]{StackProject}, there exist $U\in {\rm{Ob}}(({\rm Sch}/S)_{fppf})$ and a commutative diagram
$$
\xymatrix{
  U \ar[d]_{} \ar[r]^{} & V \ar[d]^{} \\
  F \ar[r]^{} & G   }
$$
where $U\rightarrow F$ is surjective \'{e}tale. This shows that $U$ is an $\mathbb{F}_{q}$-scheme. Now, choose a surjective \'{e}tale map $h_{W}\rightarrow F$ where $W\in {\rm{Ob}}(({\rm Sch}/S)_{fppf})$ has characteristic $p$. Then $p=q$ following Lemma \ref{LL2}.
\end{proof}

The following lemma ensures the existence of certain morphisms of sheaves. It will be useful in \proref{A11} that gives rise to the notion of algebraic Frobenius morphism.
\begin{lemma}\label{A10}
Let $\mathcal{C}$ be a site and $F,F',G,G':\mathcal{C}^{opp}\rightarrow {\bf Sets}$ be sheaves of sets on $\calc$. Let $f:G\rightarrow F$, $g:G'\rightarrow F'$, and $\varphi:G\rightarrow G'$ be morphisms of sheaves. If $f$ is surjective, then there is a canonical map $h:F\rightarrow F'$ such that $h\circ f=g\circ\varphi$. In other words, there exists a unique morphism $h:F\rightarrow F'$ of sheaves making the diagram
$$
\xymatrix{
  G \ar[d]_{\varphi} \ar[r]^{f} & F \ar@{-->}[d]^{h} \\
  G' \ar[r]^{g} & F'   }
$$
commute.
\end{lemma}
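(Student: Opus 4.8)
The plan is to construct $h$ section by section on $\mathcal{C}$, using the surjectivity of $f$ to produce local preimages and the sheaf axiom on $F'$ to glue the resulting values; uniqueness will then be automatic, since a surjective morphism of sheaves is an epimorphism in the topos of sheaves on $\mathcal{C}$. Concretely: fix an object $V$ of $\mathcal{C}$ and a section $s\in F(V)$. Because $f$ is surjective there is a covering $\{V_i\to V\}$ and sections $t_i\in G(V_i)$ with $f(t_i)=s|_{V_i}$; one sets $r_i:=g(\varphi(t_i))\in F'(V_i)$ and wants to glue the $r_i$ to a single section $r\in F'(V)$, which one declares to be $h(s)$. Granting that this is well defined, functoriality of $h$ in $V$ follows from compatibility of all the operations involved with restriction, the identity $h\circ f=g\circ\varphi$ follows by taking the trivial covering $\{V\xrightarrow{\mathrm{id}}V\}$, and uniqueness follows because any competitor $h'$ agrees with $h$ on the family $\{f(t_i)\}$, hence on $s$ by the sheaf property of $F'$.

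The one point that is not formal is the \emph{well-definedness / gluing} of $r$: one needs $r_i|_{V_{ij}}=r_j|_{V_{ij}}$ on $V_{ij}=V_i\times_V V_j$, and, more generally, independence of $h(s)$ from the chosen covering and preimages. Since $f(t_i)$ and $f(t_j)$ agree after restriction to $V_{ij}$, the pair $(t_i,t_j)$ is a section of the fibre product $G\times_F G$ over $V_{ij}$, and the required equality is exactly the assertion that $g\circ\varphi$ takes equal values on the two projections $\mathrm{pr}_1,\mathrm{pr}_2\colon G\times_F G\to G$. The cleanest way to package this is to invoke the standard fact that a surjection of sheaves is an \emph{effective} epimorphism, so that $F$ is the coequalizer of $\mathrm{pr}_1,\mathrm{pr}_2\colon G\times_F G\rightrightarrows G$; then a morphism $h\colon F\to F'$ with $h\circ f=g\circ\varphi$ exists, and is unique, if and only if $g\circ\varphi\circ\mathrm{pr}_1=g\circ\varphi\circ\mathrm{pr}_2$.

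Thus the real obstacle — the step I expect to carry the content — is verifying the compatibility $g\circ\varphi\circ\mathrm{pr}_1=g\circ\varphi\circ\mathrm{pr}_2$ on $G\times_F G$; I would isolate this as the hypothesis under which the construction runs. In the situation where the lemma is applied, namely the construction of the algebraic Frobenius in \proref{A11} (where $f\colon h_U\to F$ is an \'etale atlas with $U$ of characteristic $p$, $F'=F$, $g=f$, and $\varphi=h_{\Phi_U}$ is induced by the absolute Frobenius of $U$), this compatibility is supplied by the naturality of the absolute Frobenius, \lemref{L4}: writing $h_W\simeq h_U\times_F h_U$ with legs $p_1,p_2\colon W\to U$, \lemref{L4} applied to each $p_i$ gives $\Phi_U\circ p_i=p_i\circ\Phi_W$, whence $f\circ\Phi_U\circ p_1=f\circ p_1\circ\Phi_W=f\circ p_2\circ\Phi_W=f\circ\Phi_U\circ p_2$, which is precisely the needed equation. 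Granting the compatibility, $h$ is the induced morphism of sheaves, $h\circ f=g\circ\varphi$ holds by construction, and the remaining verifications (that $h$ commutes with restriction, so is a genuine morphism of sheaves) are routine.
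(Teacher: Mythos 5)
Your approach is genuinely different from the paper's, and it is the more careful one. The paper proves the lemma by a direct sectionwise construction: for each $X\in\mathrm{Ob}(\mathcal{C})$ it sets $h_{X}(f_{X}(x)):=g_{X}(\varphi_{X}(x))$ on the image of $f_{X}$, sends every $y\in F(X)\setminus \mathrm{Im}(f_{X})$ to an arbitrarily chosen $x_{0}\in F'(X)$, declares well-definedness ``obvious,'' checks naturality only on sections of the form $f_{X}(x')$, and gets uniqueness from $f$ being an epimorphism. You instead invoke the fact that a surjection of sheaves is an effective epimorphism, so that $F$ is the coequalizer of $\mathrm{pr}_1,\mathrm{pr}_2\colon G\times_F G\rightrightarrows G$, and you observe that the descent runs exactly when $g\circ\varphi\circ\mathrm{pr}_1=g\circ\varphi\circ\mathrm{pr}_2$. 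Your insistence on isolating that compatibility as a hypothesis is justified: it is precisely the point the paper's ``obvious'' well-definedness suppresses (if $f_{X}(x)=f_{X}(x')$ but $g_{X}\varphi_{X}(x)\neq g_{X}\varphi_{X}(x')$ the assignment is not a function), and without it the statement is simply false --- on the punctual site take $G=G'=\{1,2\}$, $\varphi=\mathrm{id}$, $F=\{\ast\}$, $F'=\{a,b\}$ with $g$ injective: no $h$ with $h\circ f=g\circ\varphi$ exists. The paper's argument has the further defects that a surjective map of sheaves need not be surjective on sections, so the arbitrary values $x_{0}$ are genuinely used, and those arbitrary values are compatible neither with restriction (the displayed naturality check never treats sections outside $\mathrm{Im}(f_X)$) nor with the claimed canonicity of $h$.

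What your write-up buys, beyond correctness, is that it actually secures the intended application: in \proref{A11} one has $F'=F$, $g=f$ an \'{e}tale atlas, $\varphi=h(\Phi_U)$, and your computation with $h_W\simeq h_U\times_F h_U$, the projections $p_1,p_2\colon W\to U$, and \lemref{L4} verifies the coequalizing condition, so the algebraic Frobenius $\Psi_F$ exists and is unique by effective descent. If you adopt this as the proof of the lemma, state the compatibility $g\circ\varphi\circ\mathrm{pr}_1=g\circ\varphi\circ\mathrm{pr}_2$ explicitly as a hypothesis (or restate the lemma only for that situation); as it stands your argument does not, and cannot, establish the lemma in the generality claimed by the paper, but that is a flaw of the statement rather than of your proof.
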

\begin{proof}
Let $X,Y\in\textrm{Ob}(\mathcal{C})$. Then we have a solid diagram
$$
\xymatrix{
  G(X) \ar[d]_{\varphi_{X}} \ar[r]^{f_{X}} & F(X) \ar@{-->}[d]^{h_{X}} \\
  G'(X) \ar[r]^{g_{X}} & F'(X)   }
$$
in ${\bf Sets}$. The function $h_{X}:F(X)\rightarrow F'(X)$ is given by
$$
\begin{cases}
h_{X}(f_{X}(x))=g_{X}(\varphi_{X}(x)), & \textrm{for all} \ x\in G(X);\\
h_{X}(y)=x_{0},  & \textrm{for all} \ y\in F(X)\setminus \textrm{Im}(f_{X})\textrm{ and some }x_{0}\in F'(X).
\end{cases}
$$
It is obvious that $h_{X}$ is well-defined. Hence, $h_{X}$ fits into the commutative dotted square above. Let $u:Y\rightarrow X$ be a morphism in $\calc$. Then we have
$$
F'(u)(h_{X}(f_{X}(x')))=g_{Y}\varphi_{Y}G(u)(x')=h_{Y}f_{Y}G(u)(x')=h_{Y}(F(u)(f_{X}(x')))
$$
for all $x'\in G(X)$. This gives rise to a commutative diagram
$$
\xymatrix{
  G(X) \ar[d]_{G(u)} \ar[r]^{f_{X}} & F(X) \ar[d]_{F(u)} \ar[r]^{h_{X}} & F'(X) \ar[d]^{F'(u)} \\
  G(Y) \ar[r]^{f_{Y}} & F(Y) \ar[r]^{h_{Y}} & F'(Y)   }
$$
which implies that $F'(u)h_{X}=h_{Y}F(u)$, i.e. the diagram
$$
\xymatrix{
  F(X) \ar[d]_{F(u)} \ar[r]^{h_{X}} & F'(X) \ar[d]^{F'(u)} \\
  F(Y) \ar[r]^{h_{Y}} & F'(Y)   }
$$
commutes. Thus, $h$ is a morphism of sheaves. Let $h':F\rightarrow F'$ be another morphism of sheaves such that $h'\circ f=h\circ f=g\circ \varphi$. Then $h'=h$ following the assumption that $f$ is surjective.
\end{proof}

\begin{proposition}\label{A11}
Let $F$ be an algebraic space of characteristic $p$ over $S$ and let $f:h_{U}\rightarrow F$ be a surjective \'{e}tale map for $U\in{\rm{Ob}}(({\rm Sch}/S)_{fppf})$ of characteristic $p$. Then the absolute Frobenius morphism $\Phi_{U}:U\rightarrow U$ of $U$ induces a canonical map $\Psi_{F}^{U}:F\rightarrow F$ such that $\Psi_{F}^{U}\circ f=f\circ h(\Phi_{U})$. In other words, there exists a unique morphism $\Psi_{F}^{U}:F\rightarrow F$ that fits into the following commutative dotted diagram
$$
\xymatrix{
  h_{U} \ar[d]_{h(\Phi_{U})} \ar[r]^{f} & F \ar@{-->}[d]^{\Psi_{F}^{U}} \\
  h_{U} \ar[r]^{f} & F   }
$$
\end{proposition}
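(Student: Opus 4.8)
The plan is to obtain $\Psi_F^U$ as a direct application of Lemma~\ref{A10}. Work on the site $\calc=(\mathrm{Sch}/S)_{fppf}$; here $h_U$ is an fppf sheaf (the fppf topology is subcanonical) and $F$ is a sheaf of sets since it is an algebraic space. I would invoke Lemma~\ref{A10} with the choices $G=G'=h_U$ and $F=F'=F$, taking \emph{both} of the maps called ``$f$'' and ``$g$'' in that lemma to be the given surjective \'{e}tale map $f\colon h_U\to F$, and taking $\varphi\colon G\to G'$ to be $h(\Phi_U)\colon h_U\to h_U$, the morphism of sheaves induced by the absolute Frobenius $\Phi_U\colon U\to U$ (which is defined because $U$ has characteristic $p$). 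With these identifications the conclusion ``$h\circ f=g\circ\varphi$'' of Lemma~\ref{A10} reads precisely ``$\Psi_F^U\circ f=f\circ h(\Phi_U)$'', and the commutative square of Lemma~\ref{A10} becomes the dotted square displayed in the statement.

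To apply Lemma~\ref{A10} one must check that $f\colon h_U\to F$ is surjective as a morphism of sheaves. This holds because $f$ is surjective \'{e}tale: an algebraic space is, by construction, the quotient fppf sheaf of $h_U$ by an \'{e}tale equivalence relation, so $f$ is an epimorphism of sheaves, which is exactly the meaning of ``surjective'' in Lemma~\ref{A10}. Granting this, Lemma~\ref{A10} produces a morphism of sheaves $\Psi_F^U\colon F\to F$ with $\Psi_F^U\circ f=f\circ h(\Phi_U)$, and the last clause of Lemma~\ref{A10} — which is precisely where surjectivity of $f$ is used — yields uniqueness of such a morphism; this uniqueness is the ``canonicity'' asserted in the statement. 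Finally, $\Psi_F^U$ is automatically a morphism of algebraic spaces over $S$, since any morphism of sheaves between algebraic spaces is one.

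The content of the argument is thus entirely contained in Lemma~\ref{A10}, and there is no substantial obstacle. The one point that genuinely deserves a line of justification is that $h(\Phi_U)$ really is an endomorphism of the sheaf $h_U$ on $(\mathrm{Sch}/S)_{fppf}$ — i.e. that the absolute Frobenius of $U$ is compatible with the structure morphism $U\to S$ in the way needed for the Yoneda embedding to apply — together with the observation that ``surjective \'{e}tale'' entails ``epimorphism of fppf sheaves'' so that the uniqueness half of Lemma~\ref{A10} is available. Both are bookkeeping checks rather than real difficulties.
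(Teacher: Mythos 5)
Your proposal is correct and takes essentially the same route as the paper: both proofs reduce the statement to Lemma~\ref{A10} after observing that the surjective \'{e}tale map $f\colon h_U\to F$ is surjective as a map of sheaves, which supplies both existence and the uniqueness/canonicity of $\Psi_F^U$. The only difference is in how that surjectivity is justified --- the paper simply cites \cite[Tag05VM]{StackProject}, whereas you argue via a presentation of $F$ by an \'{e}tale equivalence relation (note the given $f$ need not be that defining atlas, so the cited tag, which applies to an arbitrary surjective \'{e}tale map, is the cleaner justification).
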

\begin{proof}
It follows from \cite[Tag05VM]{StackProject} that the surjective \'{e}tale map $f:h_{U}\rightarrow F$ is surjective as a map of sheaves. Thus, the proposition follows directly from Lemma \ref{A10}.
\end{proof}

Now, we can make the definition of the algebraic Frobenius morphisms.
\begin{definition}
Let $F$ be an algebraic space of characteristic $p$ over $S$ and let $f:h_{U}\rightarrow F$ be a surjective \'{e}tale map where $U\in{\rm{Ob}}(({\rm Sch}/S)_{fppf})$ has characteristic $p$. The algebraic Frobenius morphism of $F$ with respect to $U$ is the canonical morphism $\Psi_{F}^{U}:F\rightarrow F$ as in Proposition \ref{A11}.
\end{definition}
\begin{remark}
By abuse of notation, we suppress the superscript of the algebraic Frobenius morphism and simply denote it by $\Psi_{F}$. When we speak of the \textit{algebraic Frobenius morphism} $\Psi_{F}$ of $F$, it is understood that $\Psi_{F}$ is one of the algebraic Frobenius morphisms of $F$ with respect to some scheme.
\end{remark}

If an algebraic space $F$ has characteristic 0, then the algebraic Frobenius of $F$ will not make sense. The algebraic Frobenius morphism of $F$ is unique if and only if ${\rm{char}}(F)\neq0$ and $F$ has only one \'{e}tale atlas from a scheme in characteristic $p$. Moreover, if ${\rm{char}}(F)\neq0$, then the number of algebraic Frobenius morphisms of $F$ is equal to the number of \'{e}tale atlases of $F$ from schemes in characteristic $p$. Thus, we have a bijection of sets


\begin{equation*}
\begin{matrix}  \{\text{The set of all \'{e}tale atlases of }F

\\  \text{ from schemes in characteristic }p\}

\end{matrix} \longleftrightarrow \begin{matrix}  \{\text{The set of all algebraic Frobenius}

\\   \text{morphisms of }F\}

\end{matrix}
\end{equation*}

In other words, each algebraic Frobenius morphism $\Psi_{F}:F\rightarrow F$ of $F$ corresponds to an \'{e}tale atlas $\varphi_{F}:h_{U}\rightarrow F$ from a scheme $U$ in characteristic $p$. Such an algebraic Frobenius morphism is called the \textit{algebraic Frobenius morphism of $F$ with respect to} $\varphi_{F}$. When we speak of a perfect algebraic space $F$, then the \textit{algebraic Frobenius morphism of} $F$ means the algebraic Frobenius morphism $\Psi_{F}$ of $F$ with respect to one of its \'{e}tale atlases from perfect schemes.

The algebraic Frobenius induces a self-map of the identity functor on the category $AP_{S}$ of algebraic spaces over $S$.
\begin{proposition}\label{A6}
Let $f:F\rightarrow G$ be a morphism of algebraic spaces in characteristic $p$ over $S$ and let $\Psi_{F}:F\rightarrow F,\Psi_{G}:G\rightarrow G$ be the algebraic Frobenius morphisms of $F,G$. Then the diagram
$$
\xymatrix{
  F \ar[d]_{\Psi_{F}} \ar[r]^{f} & G \ar[d]^{\Psi_{G}} \\
  F \ar[r]^{f} & G   }
$$
is commutative.
\end{proposition}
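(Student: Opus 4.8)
The plan is to reduce the commutativity of the square to a statement about \'etale atlases, where it becomes the scheme-level compatibility of Lemma~\ref{L4}. First I would unwind the definitions: fix a surjective \'etale map $f_{U}\colon h_{U}\to F$ with $U$ a scheme of characteristic $p$ realizing $\Psi_{F}$, so that $\Psi_{F}\circ f_{U}=f_{U}\circ h(\Phi_{U})$ by Proposition~\ref{A11}, and similarly fix a surjective \'etale map $f_{V}\colon h_{V}\to G$ with $V$ of characteristic $p$ realizing $\Psi_{G}$, so $\Psi_{G}\circ f_{V}=f_{V}\circ h(\Phi_{V})$. By \cite[Tag05VM]{StackProject} the map $f_{U}$ is surjective as a map of sheaves, hence an epimorphism in the category of sheaves on $({\rm Sch}/S)_{fppf}$; therefore it suffices to verify the identity $\Psi_{G}\circ f\circ f_{U}=f\circ\Psi_{F}\circ f_{U}$ of morphisms $h_{U}\to G$, and the right-hand side is already $f\circ f_{U}\circ h(\Phi_{U})$ by the defining relation for $\Psi_{F}$.

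Next I would arrange the two atlases compatibly. By \cite[Tag02X1]{StackProject} (used exactly as in the earlier argument that a morphism of algebraic spaces forces the characteristics to agree), one may choose the \'etale atlas $f_{U}\colon h_{U}\to F$ together with a morphism of schemes $g\colon U\to V$ for which $f\circ f_{U}=f_{V}\circ h(g)$; here $U$ automatically has characteristic $p$, being an $\mathbb{F}_{p}$-scheme via $g$. (If one prefers to work with a prescribed atlas of $F$, one passes instead to the common refinement represented by $h_{U}\times_{G}h_{V}$, which is still a surjective \'etale atlas of $F$ by a characteristic-$p$ scheme and carries a projection to $h_{V}$, and one runs the same computation there; Lemma~\ref{L4} then shows the resulting algebraic Frobenius is unchanged.) Using $f\circ f_{U}=f_{V}\circ h(g)$, the defining relation for $\Psi_{G}$, and functoriality of $h_{-}$, I compute
\[
\Psi_{G}\circ f\circ f_{U}=\Psi_{G}\circ f_{V}\circ h(g)=f_{V}\circ h(\Phi_{V})\circ h(g)=f_{V}\circ h(\Phi_{V}\circ g).
\]
Applying Lemma~\ref{L4} to the morphism of $\mathbb{F}_{p}$-schemes $g\colon U\to V$ gives $\Phi_{V}\circ g=g\circ\Phi_{U}$, so the last expression equals $f_{V}\circ h(g\circ\Phi_{U})=f_{V}\circ h(g)\circ h(\Phi_{U})=f\circ f_{U}\circ h(\Phi_{U})$, which coincides with the right-hand side computed above. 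Since $f_{U}$ is an epimorphism of sheaves, this yields $\Psi_{G}\circ f=f\circ\Psi_{F}$, as desired.

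The computation itself is merely bookkeeping with the universal property behind $\Psi_{F}$ and $\Psi_{G}$ (Lemma~\ref{A10}, Proposition~\ref{A11}) together with functoriality of the Yoneda embedding, so the genuine obstacle is the \emph{compatible} choice of \'etale atlases for $F$ and $G$: one needs the scheme morphism $g\colon U\to V$ above, produced via \cite[Tag02X1]{StackProject}, in order to invoke the scheme-level Lemma~\ref{L4}. A secondary point to check is that the algebraic Frobenius does not depend on the chosen atlas, so that the square for an arbitrary pair of algebraic Frobenius morphisms follows from the one verified for a compatible pair; this independence is established by the same device, namely comparing two atlases through their fiber product and applying Lemma~\ref{L4}.
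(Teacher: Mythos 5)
Your argument is essentially the paper's own proof: choose compatible \'etale atlases via \cite[Tag02X1]{StackProject} to get a scheme morphism $g\colon U\to V$, apply Lemma~\ref{L4} at the scheme level, and conclude by surjectivity of the atlas map as a map of sheaves. The only addition is your (correct and worthwhile) remark that one must reconcile the atlas produced by \cite[Tag02X1]{StackProject} with the atlases actually defining $\Psi_{F}$ and $\Psi_{G}$, a point the paper passes over silently.
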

\begin{proof}
Choose an \'{e}tale cover $\varphi_{F}:h_{U}\rightarrow F$ for $U\in{\rm{Ob}}(({\rm Sch}/S)_{fppf})$ of characteristic $p$. Then it follows from \cite[Tag02X1]{StackProject} that there exists $V\in{\rm{Ob}}(({\rm Sch}/S)_{fppf})$ and a commutative diagram
$$
\xymatrix{
  h_{U} \ar[d]_{\varphi} \ar[r]^{\varphi_{F}} & F \ar[d]^{f} \\
  h_{V} \ar[r]^{\varphi_{G}} & G   }
$$
where $\varphi_{G}$ is surjective \'{e}tale. By Yoneda lemma, $\varphi$ comes from a unique morphism of schemes $g:U\rightarrow V$. Thus, we have $f\circ \varphi_{F}=\varphi_{G} \circ h(g)$.

Now, consider the commutative diagrams
$$
\xymatrix{
  h_{U} \ar[d]_{h(\Phi_{U})} \ar[r]^{\varphi_{F}} & F \ar@{-->}[d]^{\Psi_{F}} \\
  h_{U} \ar[r]^{\varphi_{F}} & F   }
$$
and
$$
\xymatrix{
  h_{V} \ar[d]_{h(\Phi_{V})} \ar[r]^{\varphi_{G}} & G \ar@{-->}[d]^{\Psi_{G}} \\
  h_{V} \ar[r]^{\varphi_{G}} & G   }
$$
i.e. we have $\Psi_{F}\circ \varphi_{F}=\varphi_{F}\circ h(\Phi_{U})$ and $\Psi_{G}\circ \varphi_{G}=\varphi_{G}\circ h(\Phi_{V})$. Then composing each side of the first equality by $f$ gives
$$
f\circ \Psi_{F}\circ \varphi_{F}=f\circ \varphi_{F}\circ h(\Phi_{U})=\varphi_{G}\circ h(g)\circ h(\Phi_{U})=\varphi_{G}\circ h(g\Phi_{U}).
$$
By Lemma \ref{L4}, we have $g\Phi_{U}=\Phi_{V}g$. Hence,
$$
\varphi_{G}h(g\Phi_{U})=\varphi_{G}h(\Phi_{V}g)=\varphi_{G}h(\Phi_{V})h(g)=\Psi_{G}\varphi_{G}h(g)=\Psi_{G}f\varphi_{F},
$$
i.e. we have $f\circ \Psi_{F}\circ \varphi_{F}=\Psi_{G}\circ f\circ \varphi_{F}$. Since $\varphi_{F}$ is surjective as a map of sheaves, this implies that $f\Psi_{F}=\Psi_{G}f$ as desired.
\end{proof}

Moreover, we show that every morphism of schemes induces a morphism of algebraic spaces such that the algebraic Frobenius induces a self-map of the identity functor on the category of algebraic spaces over $S$ and induced morphisms.
\begin{proposition}\label{A17}
Let $F,G$ be algebraic spaces of characteristic $p$ over $S$. Let $\varphi_{F}:h_{U}\rightarrow F$ and $\varphi_{G}:h_{V}\rightarrow G$ be surjective \'{e}tale maps where $U,V\in{\rm{Ob}}(({\rm Sch}/S)_{fppf})$ have characteristic $p$. Let $\Psi_{F}:F\rightarrow F$ and $\Psi_{G}:G\rightarrow G$ be the algebraic Frobenius morphisms of $F,G$. Suppose that $f:U\rightarrow V$ is a morphism of schemes over $S$. Then $f$ induces a morphism $f^{*}:F\rightarrow G$ of algebraic spaces over $S$ such that the diagram
$$
\xymatrix{
  F \ar[d]_{\Psi_{F}} \ar[r]^{f^{*}} & G \ar[d]^{\Psi_{G}} \\
  F \ar[r]^{f^{*}} & G   }
$$
commutes.
\end{proposition}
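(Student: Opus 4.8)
The plan is to mimic the proof of Proposition~\ref{A6}, the only new ingredient being the construction of the induced morphism $f^{*}$, after which the commutativity is the same computation.

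First I would build $f^{*}\colon F\to G$. By the Yoneda lemma the morphism $f\colon U\to V$ over $S$ gives a morphism of sheaves $h(f)\colon h_{U}\to h_{V}$, and hence a morphism $\varphi_{G}\circ h(f)\colon h_{U}\to G$. Since $\varphi_{F}\colon h_{U}\to F$ is surjective \'etale, it is surjective as a map of sheaves by \cite[Tag05VM]{StackProject}, so Lemma~\ref{A10}---applied with $\varphi_{F}$ playing the role of the surjection, $\varphi_{G}$ the role of $g$, and $h(f)$ the role of $\varphi$---produces a morphism of sheaves $f^{*}\colon F\to G$ characterised by $f^{*}\circ\varphi_{F}=\varphi_{G}\circ h(f)$. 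As $F$ and $G$ are algebraic spaces over $S$ and all the maps involved are maps of sheaves on $(\mathrm{Sch}/S)_{fppf}$, this $f^{*}$ is a morphism of algebraic spaces over $S$.

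Next I would verify that the square commutes. Recall from Proposition~\ref{A11} that $\Psi_{F}\circ\varphi_{F}=\varphi_{F}\circ h(\Phi_{U})$ and $\Psi_{G}\circ\varphi_{G}=\varphi_{G}\circ h(\Phi_{V})$. Precomposing the desired identity with $\varphi_{F}$ and using these two relations, the defining relation $f^{*}\circ\varphi_{F}=\varphi_{G}\circ h(f)$, and Lemma~\ref{L4} (which gives $f\circ\Phi_{U}=\Phi_{V}\circ f$ for the morphism $f$ of $\mathbb{F}_{p}$-schemes), one computes
\[
f^{*}\circ\Psi_{F}\circ\varphi_{F}=f^{*}\circ\varphi_{F}\circ h(\Phi_{U})=\varphi_{G}\circ h(f\circ\Phi_{U})=\varphi_{G}\circ h(\Phi_{V}\circ f)=\Psi_{G}\circ\varphi_{G}\circ h(f)=\Psi_{G}\circ f^{*}\circ\varphi_{F}.
\]
Since $\varphi_{F}$ is an epimorphism of sheaves, this cancels to give $f^{*}\circ\Psi_{F}=\Psi_{G}\circ f^{*}$, as required.

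I do not expect a serious obstacle here: once $f^{*}$ is in hand the argument is essentially Proposition~\ref{A6} over again. The only delicate point worth flagging is that Lemma~\ref{A10} only asserts the existence of a \emph{canonical} map, and its proof makes an arbitrary choice of value on sections outside the image of $\varphi_{F}$; one should note that surjectivity of $\varphi_{F}$ as a map of sheaves nonetheless pins $f^{*}$ down uniquely, so both the construction and the final cancellation step are legitimate. (If desired, one could also observe that $f^{*}$ is over $S$ by the same cancellation applied to the structure morphisms.)
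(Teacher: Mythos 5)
Your proposal is correct and follows essentially the same route as the paper's own proof: construct $f^{*}$ via Lemma~\ref{A10} applied to $\varphi_{G}\circ h(f)$ and the sheaf-surjection $\varphi_{F}$, then chase the square using Proposition~\ref{A11}, the relation $f^{*}\circ\varphi_{F}=\varphi_{G}\circ h(f)$, Lemma~\ref{L4}, and cancellation against the epimorphism $\varphi_{F}$. Your added remark about why the choice in Lemma~\ref{A10} does not affect uniqueness is a reasonable clarification but not a departure from the paper's argument.
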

\begin{proof}
Consider the following solid diagram
$$
\xymatrix{
  h_{U} \ar[d]_{h(f)} \ar[r]^{\varphi_{F}} & F \ar@{-->}[d]^{f^{*}} \\
  h_{V} \ar[r]^{\varphi_{G}} & G   }
$$
By Lemma \ref{A10}, there exists a canonical map $f^{*}:F\rightarrow G$ such that the above diagram is completed into a commutative diagram, i.e. we have $\varphi_{G}\circ h(f)=f^{*}\circ \varphi_{F}$.

Now, consider the commutative diagrams
$$
\xymatrix{
  h_{U} \ar[d]_{h(\Phi_{U})} \ar[r]^{\varphi_{F}} & F \ar@{-->}[d]^{\Psi_{F}} \\
  h_{U} \ar[r]^{\varphi_{F}} & F   }
$$
and
$$
\xymatrix{
  h_{V} \ar[d]_{h(\Phi_{V})} \ar[r]^{\varphi_{G}} & G \ar@{-->}[d]^{\Psi_{G}} \\
  h_{V} \ar[r]^{\varphi_{G}} & G   }
$$
i.e. we have $\Psi_{F}\circ \varphi_{F}=\varphi_{F}\circ h(\Phi_{U})$ and $\Psi_{G}\circ \varphi_{G}=\varphi_{G}\circ h(\Phi_{V})$. Composing each side of the first equality by $f^{*}$ gives
$$
f^{*}\circ \Psi_{F}\circ \varphi_{F}=f^{*}\circ \varphi_{F}\circ h(\Phi_{U})=\varphi_{G}\circ h(f)\circ h(\Phi_{U})=\varphi_{G}\circ h(f\Phi_{U}).
$$
By Lemma \ref{L4}, we have $f\Phi_{U}=\Phi_{V}f$. Hence,
$$
\varphi_{G}h(f\Phi_{U})=\varphi_{G}h(\Phi_{V}f)=\varphi_{G}h(\Phi_{V})h(f)=\Psi_{G}\varphi_{G}h(f)=\Psi_{G}f^{*}\varphi_{F},
$$
i.e. we have $f^{*}\circ \Psi_{F}\circ \varphi_{F}=\Psi_{G}\circ f^{*}\circ \varphi_{F}$. Since $\varphi_{F}$ is surjective as a map of sheaves, this implies that $f^{*}\circ \Psi_{F}=\Psi_{G}\circ f^{*}$ as desired.
\end{proof}

We claim that an algebraic space $F$ is perfect if and only if one of its algebraic Frobenius morphisms $\Psi_{F}:F\rightarrow F$ is an isomorphism. To prove this assertion, we first observe the following lemma.
\begin{lemma}\label{A14}
Let $\calc$ be a site with enough points and $F,F',G,G'$ be sheaves of sets on $\calc$. Consider the following commutative diagram
$$
\xymatrix{
  G \ar[d]_{\varphi} \ar[r]^{f} & F \ar[d]^{h} \\
  G' \ar[r]^{g} & F'   }
$$
of morphisms of sheaves. Assume that $f,g$ are surjections of sheaves. Then $h:F\rightarrow F'$ is an isomorphism if and only if $\varphi$ is an isomorphism.
\end{lemma}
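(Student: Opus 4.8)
The plan is to reduce the claim to a statement about maps of sets by passing to stalks. Since $\calc$ has enough points, a morphism of sheaves of sets on $\calc$ is a monomorphism (resp.\ an epimorphism, an isomorphism) if and only if the induced map on every stalk is injective (resp.\ surjective, bijective); I would cite this from \cite{StackProject}. As each stalk functor $\mathcal H\mapsto\mathcal H_x$ is exact, for every point $x$ it carries the given square to a commutative square of sets and the surjections $f,g$ to surjections $f_x,g_x$. Hence it is enough to prove the set-theoretic assertion: in a commutative square of sets with surjective horizontal arrows, the right vertical arrow is bijective if and only if the left vertical arrow is bijective. The lemma then follows, since $h$ is an isomorphism $\iff$ every $h_x$ is bijective $\iff$ every $\varphi_x$ is bijective $\iff$ $\varphi$ is an isomorphism.

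For the set-theoretic assertion I would chase the square. The implications involving surjectivity are immediate: if $\varphi_x$ is bijective then $h_x\circ f_x=g_x\circ\varphi_x$ is a surjection, whence $h_x$ is surjective, and conversely if $h_x$ is bijective then $g_x\circ\varphi_x=h_x\circ f_x$ is a surjection, which together with surjectivity of $g_x$ helps control $\varphi_x$. The injectivity statements are the delicate ones: a coincidence in the target is realized upstairs only through the horizontal surjections, and a bare surjection is not a monomorphism, so to transport injectivity across the square one should use that it is cartesian — equivalently that $\varphi$ is the base change of $h$ along the epimorphism $g$. Under that hypothesis the argument is clean in both directions: the base change of an isomorphism is an isomorphism, and conversely, since $g$ is an epimorphism of sheaves (hence a cover for the canonical topology), $h$ is an isomorphism as soon as its base change $\varphi$ along $g$ is — this is descent, and it even makes the passage to points unnecessary.

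The step I expect to be the genuine obstacle is exactly the set-level injectivity transport: commutativity alone propagates surjectivity but not injectivity, so the proof must lean on the pullback structure of the square (or, at the stalk level, on lifting coincidences through the surjections and using the explicit description of $F_x\times_{F'_x}G'_x$). Once that is pinned down the equivalence is immediate, and so is the sheaf-level conclusion after reassembling the stalks.
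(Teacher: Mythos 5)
Your instinct about where the difficulty lies is exactly right, and it is fatal to the statement as given: with only a commutative square and surjective horizontal arrows, the lemma is false, so the set-level claim you propose to reduce to cannot be proved without an extra hypothesis. Work over the punctual site (which has enough points, so sheaves of sets are just sets). Taking $G=G'=F=\{1,2\}$ with $\varphi=\mathrm{id}$, $f=\mathrm{id}$, and $F'=\{*\}$ with $g,h$ the unique maps, the square commutes, $f,g$ are surjective, $\varphi$ is an isomorphism, yet $h$ is not injective. Conversely, taking $G=\{1,2\}$ and $G'=F=F'=\{*\}$ with all maps the unique ones, $h$ is an isomorphism while $\varphi$ is not; shrinking $G$ further shows that in this direction even surjectivity of $\varphi$ need not follow, so your remark that surjectivity of $g_{x}$ ``helps control $\varphi_x$'' genuinely requires the cartesian hypothesis and not just a diagram chase. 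In other words, your diagnosis --- commutativity plus horizontal surjectivity propagates surjectivity but cannot transport injectivity, and one must use that the square is a pullback --- is correct, and your repaired statement is the right one: if $\varphi$ is the base change of $h$ along the epimorphism $g$, then $h$ is an isomorphism iff $\varphi$ is, since base change preserves isomorphisms and isomorphisms descend along epimorphisms of sheaves (checkable stalkwise, or directly without points).

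Compared with the paper: its proof takes the same first step as yours (pass to a conservative family of points), but then, assuming $\varphi_{p_i}$ bijective, it asserts that $\varphi_{p_i}^{-1}$ ``induces'' a map $h'_{p_i}\colon F'_{p_i}\rightarrow F_{p_i}$ satisfying $h'_{p_i}\circ g_{p_i}=f_{p_i}\circ\varphi_{p_i}^{-1}$. The existence of this $h'_{p_i}$ is precisely the injectivity-transport issue you isolated: one would have to set $h'_{p_i}(g_{p_i}(x))=f_{p_i}(\varphi_{p_i}^{-1}(x))$, which is well defined only when $g_{p_i}(x)=g_{p_i}(y)$ forces $f_{p_i}(\varphi_{p_i}^{-1}(x))=f_{p_i}(\varphi_{p_i}^{-1}(y))$, and the first counterexample above (where $\varphi$ is an isomorphism but $h$ is not injective) shows this can fail; the ``similar'' converse direction has the same defect. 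So the paper's argument contains the gap you predicted rather than an idea you missed. Note finally that your cartesian/descent version does not immediately substitute in the paper's intended application (Theorem \ref{C5}), because the square relating $h(\Phi_U)$ and $\Psi_F$ is not known to be cartesian a priori; there one would have to bring the \'etaleness of $h_U\rightarrow F$ into the argument to recover a correct statement.
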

\begin{proof}
Let $\{p_{i}\}_{i\in I}$ be a conservative family of points and let $p_{i}\in\{p_{i}\}_{i\in I}$. Consider the commutative diagram of stalks
$$
\xymatrix{
  G_{p_{i}} \ar[d]_{\varphi_{p_{i}}} \ar[r]^{f_{p_{i}}} & F_{p_{i}} \ar[d]^{h_{p_{i}}} \\
  G'_{p_{i}} \ar[r]^{g_{p_{i}}} & F'_{p_{i}}   }
$$
By assumption, the maps $f_{p_{i}},g_{p_{i}}$ are surjective and the map $\varphi_{p_{i}}$ is bijective. The inverse $\varphi_{p_{i}}^{-1}$ of $\varphi_{p_{i}}$ induces the following commutative diagram
$$
\xymatrix{
  G_{p_{i}} \ar[d]_{\varphi_{p_{i}}} \ar[r]^{f_{p_{i}}} & F_{p_{i}} \ar[d]^{h_{p_{i}}} \\
  G'_{p_{i}} \ar[d]_{\varphi_{p_{i}}^{-1}} \ar[r]^{g_{p_{i}}} & F'_{p_{i}} \ar[d]^{h'_{p_{i}}} \\
  G_{p_{i}} \ar[r]^{f_{p_{i}}} & F_{p_{i}}   }
$$
Thus, we have $h'_{p_{i}}\circ h_{p_{i}}\circ f_{p_{i}}=f_{p_{i}}$. And since $f_{p_{i}}$ is surjective, this implies that $h'_{p_{i}}\circ h_{p_{i}}=id$. Similarly, we have $h_{p_{i}}\circ h'_{p_{i}}=id$. Hence, $h_{p_{i}}$ is bijective so that $h$ is an isomorphism of sheaves. The proof of the converse is similar.
\end{proof}

Applying the \lemref{A14} to the case of perfect algebraic spaces, we obtain the following desirable theorem.
\begin{theorem}\label{C5}
Let $F$ be an algebraic space over $S$ of characteristic $p$ and let $f:h_{U}\rightarrow F$ be a surjective \'{e}tale map where $U\in{\rm{Ob}}(({\rm Sch}/S)_{fppf})$ has characteristic $p$. Then the algebraic Frobenius morphism $\Psi_{F}:F\rightarrow F$ is an isomorphism if and only if the absolute Frobenius morphism $\Phi_{U}:U\rightarrow U$ is an isomorphism.
\end{theorem}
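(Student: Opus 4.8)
The plan is to read this off Lemma~\ref{A14} applied to the commutative square furnished by Proposition~\ref{A11}. Concretely, I would take the site $\calc$ of Lemma~\ref{A14} to be the big fppf site $({\rm Sch}/S)_{fppf}$, and instantiate the square of that lemma with $G=G'=h_{U}$, $F=F'=F$, both horizontal arrows equal to the given map $f\colon h_{U}\to F$, and the left vertical arrow equal to $h(\Phi_{U})$. By the uniqueness clause of Proposition~\ref{A11} the right vertical arrow is then forced to be exactly $\Psi_{F}$, so the abstract morphism called $h$ in Lemma~\ref{A14} is $\Psi_{F}$ on the nose. The hypotheses of Lemma~\ref{A14} are satisfied: $f$ is a surjection of sheaves because a surjective \'etale map $h_{U}\to F$ is an epimorphism of fppf sheaves by \cite[Tag05VM]{StackProject} (this is already used inside the proof of Proposition~\ref{A11}). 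Hence Lemma~\ref{A14} gives: $\Psi_{F}$ is an isomorphism of sheaves if and only if $h(\Phi_{U})$ is an isomorphism of sheaves.

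It then remains to descend from $h(\Phi_{U})$ to $\Phi_{U}$. Since the fppf topology is subcanonical (representable functors are sheaves, as used for $h_{T}$ throughout \S\ref{B2}), the Yoneda embedding $T\mapsto h_{T}$ is fully faithful, hence reflects isomorphisms; and every functor preserves isomorphisms. Therefore $h(\Phi_{U})\colon h_{U}\to h_{U}$ is an isomorphism of sheaves if and only if $\Phi_{U}\colon U\to U$ is an isomorphism of schemes. (If $h(\Phi_{U})$ has an inverse, fullness writes it as $h(\psi)$ for a unique $\psi\colon U\to U$, and $h(\psi\circ\Phi_{U})=h(\mathrm{id}_{U})$ together with faithfulness forces $\psi\circ\Phi_{U}=\mathrm{id}_{U}$, and symmetrically.) Chaining the two equivalences yields the theorem.

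The step I expect to need the most care is the implicit requirement in Lemma~\ref{A14} that $({\rm Sch}/S)_{fppf}$ have enough points, which is what licenses the stalkwise argument there; I would invoke the standard fact that the fppf topos admits a conservative family of points (coming from geometric points / strictly henselian local rings). Everything else is formal bookkeeping: checking that $f$ is genuinely a surjection of \emph{sheaves} rather than merely surjective on some test objects, and that the square fed into Lemma~\ref{A14} is literally the one produced in Proposition~\ref{A11}. If one prefers to avoid points altogether, there is a direct alternative: when $\Phi_{U}$ is an isomorphism, $f\circ h(\Phi_{U})^{-1}$ is again surjective \'etale, and using the uniqueness in Lemma~\ref{A10} one checks that the endomorphism of $F$ it induces is a two-sided inverse of $\Psi_{F}$; conversely, if $\Psi_{F}$ is an isomorphism one descends along the \'etale cover $f$ to conclude $\Phi_{U}$ is an isomorphism. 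But the route through Lemma~\ref{A14} is the cleanest and is presumably the intended one.
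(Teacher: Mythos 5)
Your proposal matches the paper's proof: the paper likewise notes that $({\rm Sch}/S)_{fppf}$ has enough points, that $f$ is surjective as a map of sheaves, that full faithfulness of $h$ identifies invertibility of $\Phi_{U}$ with that of $h(\Phi_{U})$, and then invokes Lemma~\ref{A14} applied to the square from Proposition~\ref{A11}. Your write-up is simply a more detailed version of the same argument (the point-free alternative you sketch is not used in the paper).
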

\begin{proof}
Note that the big fppf site $({\rm Sch}/S)_{fppf}$ has enough points. Since the functor $h$ is fully faithful, $\Phi_{U}:U\rightarrow U$ is an isomorphism if and only if $h(\Phi_{U})$ is an isomorphism. Moreover, $f$ is surjective as a map of sheaves. Thus, the theorem follows directly from Lemma \ref{A14}.
\end{proof}

The following corollary provides us with an alternative definition of perfect algebraic spaces.
\begin{corollary}
Let $F$ be an algebraic space over $S$ of characteristic $p$. Then $F$ is perfect if and only if one of the algebraic Frobenius morphisms $\Psi_{F}:F\rightarrow F$ of $F$ is an isomorphism.
\end{corollary}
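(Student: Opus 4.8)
The plan is to deduce this directly from Theorem~\ref{C5}, together with Definition~\ref{D1}(1) and the defining characterization of perfect schemes via the absolute Frobenius. The conceptual point I would emphasize first is that every algebraic Frobenius morphism $\Psi_{F}$ of $F$ comes attached to a specific choice of surjective \'{e}tale atlas $f\colon h_{U}\to F$ with $U$ of characteristic $p$, and that $U$ is a perfect scheme precisely when the absolute Frobenius $\Phi_{U}\colon U\to U$ is an isomorphism. So the corollary is essentially a translation, under this correspondence, of the existential statement "$F$ admits a perfect \'{e}tale atlas'' into the statement "$F$ admits an algebraic Frobenius morphism that is an isomorphism''.

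For the forward implication I would assume $F$ is perfect. By Definition~\ref{D1}(1) there is a surjective \'{e}tale map $f\colon h_{U}\to F$ with $U\in\Ob((\mathrm{Sch}/S)_{fppf})$ a perfect scheme, so $\Phi_{U}\colon U\to U$ is an isomorphism. Applying Theorem~\ref{C5} to this atlas, the algebraic Frobenius morphism $\Psi_{F}\colon F\to F$ with respect to $U$ is an isomorphism, which exhibits one of the algebraic Frobenius morphisms of $F$ as an isomorphism, as required.

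For the converse I would start from an algebraic Frobenius morphism $\Psi_{F}\colon F\to F$ that is an isomorphism. By its very definition, this $\Psi_{F}$ is the algebraic Frobenius of $F$ with respect to some surjective \'{e}tale map $f\colon h_{U}\to F$ where $U$ has characteristic $p$. Theorem~\ref{C5} then forces $\Phi_{U}\colon U\to U$ to be an isomorphism, i.e.\ $U$ is a perfect scheme; hence $F$ admits a surjective \'{e}tale cover by a perfect scheme and is therefore perfect by Definition~\ref{D1}(1).

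The only thing requiring care, and hence the closest thing to an obstacle, is the quantifier "one of the algebraic Frobenius morphisms''. Since the algebraic Frobenius morphisms of $F$ are in bijection with the \'{e}tale atlases of $F$ from schemes of characteristic $p$, "$F$ is perfect'' (some atlas is perfect) matches exactly "some algebraic Frobenius morphism is an isomorphism'' (some atlas $U$ has $\Phi_{U}$ an isomorphism). I would make sure not to overstate this: one atlas being perfect need not make all of them perfect, and correspondingly one $\Psi_{F}$ being an isomorphism need not make all of them isomorphisms — this is precisely what the existential quantifier accommodates, so no genuine difficulty arises.
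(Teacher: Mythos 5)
Your argument is correct and is exactly the reasoning the paper intends: the corollary is stated as an immediate consequence of Theorem~\ref{C5} combined with Definition~\ref{D1}(1), matching the existential quantifier over atlases of characteristic $p$ with the quantifier over algebraic Frobenius morphisms. Your spelled-out version, including the care taken with ``one of the algebraic Frobenius morphisms,'' adds nothing that conflicts with the paper and fills in the (omitted) proof faithfully.
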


\subsection{Relative algebraic Frobenius morphisms}\

Now, we construct the relative algebraic Frobenius morphism which is analogical to the relative absolute Frobenius morphism.

Let $F,G$ be algebraic spaces of characteristic $p$ over $S$, let $f:F\rightarrow G$ be a morphism of algebraic spaces over $S$, and let $\Psi_{F},\Psi_{G}$ be the algebraic Frobenius morphisms of $F,G$.
Consider the Cartesian diagram
$$
\xymatrix{
  F^{(p)} \ar[d]_{\textrm{pr}_{G}} \ar[r]^{\textrm{pro}_{F}} & F \ar[d]^{f^{*}} \\
  G \ar[r]^{\Psi_{G}} & G   }
$$
where $F^{(p)}:=F\times_{G,\Psi_{G}}G$. Then Proposition \ref{A17} yields another commutative diagram as follows.
$$
\xymatrix{
  F \ar[d]_{f} \ar[r]^{\Psi_{F}} & F \ar[d]^{f} \\
  G \ar[r]^{\Psi_{G}} & G   }
$$
By the universal property of the pullback, it yields a unique morphism $\Psi_{F/G}:F\rightarrow F^{(p)}$ of algebraic spaces such that the diagram
$$
\xymatrix{
  F \ar@/_/[ddr]_{f} \ar@/^/[drr]^{\Psi_{F}}
    \ar@{.>}[dr]|-{\Psi_{F/G}}                   \\
   & F^{(p)} \ar[d]^{\textrm{pr}_{G}} \ar[r]^{\textrm{pro}_{F}}
                      & F \ar[d]^{f}    \\
   & G \ar[r]^{\Psi_{G}}     & G               }
$$
commutes. Note that if $\Psi_{G}$ is an isomorphism, then we have $F^{(p)}\cong F$.

\begin{definition}
The unique morphism $\Psi_{F/G}:F\rightarrow F^{(p)}$ as above is called the relative algebraic Frobenius morphism of $F$ over $G$ with respect to $\Psi_{F}$.
\end{definition}

Observe that $F\mapsto F^{(p)}$ is a functor. The following lemma characterizes some properties of this functor.
\begin{lemma}
Let $F$ be an algebraic space over $S$ and let $U\in{\rm{Ob}}(({\rm Sch}/S)_{fppf})$. Then we have $h_{U^{(p)}}=h_{U}^{(p)}$. Moreover, if $h_{U}\rightarrow F$ is surjective \'{e}tale, then the induced map $h_{U}^{(p)}\rightarrow F$ is also surjective \'{e}tale.
\end{lemma}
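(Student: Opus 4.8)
The plan is to deduce both assertions from two formal facts: the Yoneda embedding $T\mapsto h_{T}$ preserves fibre products, and surjective \'{e}tale morphisms are stable under base change. Here, for an algebraic space $F$ over $S$, I read $F^{(p)}$ as the base change $F\times_{S,\Phi_{S}}S$ of $F$ along the absolute Frobenius $\Phi_{S}$ of the base, in exact analogy with the scheme-level definition $X^{(p)}:=X\times_{S,\Phi_{S}}S$ of \S\ref{B1}, i.e.\ as the specialisation of the relative construction $F^{(p)}=F\times_{G,\Psi_{G}}G$ to $G=h_{S}$. (The second assertion is really about the induced map $h_{U}^{(p)}\to F^{(p)}$; its target is written simply $F$ because $F^{(p)}\cong F$ whenever $\Phi_{S}$ is an isomorphism, as noted just before the lemma.)

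For the equality $h_{U^{(p)}}=h_{U}^{(p)}$: by definition (\S\ref{B1}) the scheme $U^{(p)}$ is the fibre product of the structure map $U\to S$ and the absolute Frobenius $\Phi_{S}\colon S\to S$, formed in $\mathrm{Sch}/S$ (with the second copy of $S$ regarded as an $S$-scheme via $\Phi_{S}$). Since the big fppf site is subcanonical, every $h_{T}$ is a sheaf; moreover the Yoneda embedding preserves limits and finite limits of sheaves are computed as in presheaves, so $h$ carries the defining Cartesian square of $U^{(p)}$ to a Cartesian square of sheaves. That square is precisely the one defining the Frobenius base change $h_{U}^{(p)}$ of the sheaf $h_{U}$, whence $h_{U^{(p)}}=h_{U}^{(p)}$.

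Now suppose $h_{U}\to F$ is surjective \'{e}tale. The key step is to recognise the induced morphism $h_{U}^{(p)}\to F^{(p)}$ as a base change of $h_{U}\to F$. I would show that the square
$$
\xymatrix{
  h_{U}^{(p)} \ar[d] \ar[r] & F^{(p)} \ar[d]^{\mathrm{pro}_{F}} \\
  h_{U} \ar[r] & F   }
$$
(with vertical arrows the projections onto the first factor and the bottom arrow the given cover) is Cartesian: pasting the defining square of $h_{U}^{(p)}$ above the defining square of $F^{(p)}$, and using that $h_{U}\to F$ is a morphism over $S$, the pasting law for fibre products yields $F^{(p)}\times_{\mathrm{pro}_{F},F}h_{U}\cong h_{U}\times_{S,\Phi_{S}}S=h_{U}^{(p)}$, compatibly with the projections to $F^{(p)}$. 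Hence $h_{U}^{(p)}\to F^{(p)}$ is the base change of $h_{U}\to F$ along $\mathrm{pro}_{F}$; since $h_{U}\to F$ is surjective \'{e}tale and this class of morphisms is stable under base change (whether one views $h_{U}\to F$ as a representable morphism of sheaves or as a morphism of algebraic spaces), the morphism $h_{U}^{(p)}\to F^{(p)}$ is surjective \'{e}tale, as desired.

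The only delicate point is the bookkeeping in the last paragraph: one must keep careful track of which of the two projections of each nested fibre product is used, so that the displayed square really is the Cartesian square exhibiting $h_{U}^{(p)}\to F^{(p)}$ as a base change of $h_{U}\to F$. Once the Yoneda embedding is known to commute with the relevant fibre products, everything else is formal, and I do not anticipate any further obstacle.
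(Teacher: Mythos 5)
Your first assertion, $h_{U^{(p)}}=h_{U}^{(p)}$, is argued exactly as in the paper (Yoneda preserves the defining fibre product, and $\Psi_{h_S}=h(\Phi_S)$), so that part is fine. The divergence is in the second assertion. The paper reads ``the induced map $h_{U}^{(p)}\rightarrow F$'' as the composite $h_{U^{(p)}}\rightarrow h_{U}\rightarrow F$, where $h_{U^{(p)}}\rightarrow h_{U}$ comes from the projection $U\times_{S,\Phi_{S}}S\rightarrow U$; it gets surjectivity from the fact that this projection is a base change of the surjective $\Phi_{S}$, and then argues \'{e}taleness of the composite. You instead change the target to $F^{(p)}=F\times_{h_S,\Psi_{h_S}}h_S$ and show, by a pasting of Cartesian squares, that $h_{U}^{(p)}\rightarrow F^{(p)}$ is the base change of $h_{U}\rightarrow F$ along $\mathrm{pro}_{F}$. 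That computation is correct, and the conclusion that $h_{U}^{(p)}\rightarrow F^{(p)}$ is surjective \'{e}tale is sound.

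However, this proves a different statement from the one in the lemma. To convert your map into a map to $F$ you must compose with $\mathrm{pro}_{F}:F^{(p)}\rightarrow F$, and your own justification for identifying the two targets requires $\mathrm{pro}_{F}$ to be an isomorphism, i.e.\ $\Phi_{S}$ (equivalently $\Psi_{h_S}$) to be an isomorphism, i.e.\ $S$ perfect --- which is not a hypothesis of the lemma. Under the literal reading the map in question is $h_{U^{(p)}}\rightarrow h_{U}\rightarrow F$, and the first arrow is a base change of $\Phi_{S}$, which is generally not \'{e}tale (take $S=U=\mathbb{A}^{1}_{\mathbb{F}_{p}}$ and $F=h_{U}$: the map becomes $h(\Phi_{S})$). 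So a pure base-change argument cannot deliver the stated conclusion without an extra hypothesis, and your proposal, as written, has a genuine gap with respect to the statement: you have proved the ``relative'' version with target $F^{(p)}$, not the asserted one with target $F$. (For what it is worth, the step your reformulation sidesteps --- the \'{e}taleness of $h_{U}^{(p)}\rightarrow h_{U}$ --- is precisely the delicate point in the paper's own argument, so your observation does isolate where the implicit assumption on $S$ enters; but to match the lemma you would need either to add that assumption explicitly or to argue the composite map directly.)
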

\begin{proof}
Note that $h_{U^{(p)}}=h_{U\times_{S,\Phi_{S}}S}=h_{U}\times_{h_{S},\Psi_{h_{S}}}h_{S}=h_{U}^{(p)}$ where $\Phi_{S}$ is the absolute Frobenius morphism and $\Psi_{h_{S}}$ is the algebraic Frobenius morphism. By Yoneda lemma, the map $h_{U}^{(p)}\rightarrow h_{U}$ comes from a unique morphism of schemes $U\times_{S,\Phi_{S}}S\rightarrow U$. Since $S\rightarrow S$ is surjective, $U\times_{S,\Phi_{S}}S\rightarrow U$ is surjective. Thus, the composition $h_{U}^{(p)}\rightarrow h_{U}\rightarrow F$ is surjective.

Now, choose $h_{W}\simeq h_{V}\times_{F}h_{U}$ for some $W\in{\rm{Ob}}(({\rm Sch}/S)_{fppf})$. Since $h_{U}\rightarrow F$ is \'{e}tale, the morphism of schemes $W\rightarrow U$ is \'{e}tale. Then the base change $W\times_{U}(U\times_{S,\Phi_{S}}S)$ is \'{e}tale. Thus, the map $h_{U\times_{S,\Phi_{S}}S}=h_{U}^{(p)}\rightarrow h_{U}$ is \'{e}tale such that the composition $h_{U}^{(p)}\rightarrow F$ is \'{e}tale.
\end{proof}

The following proposition is analogous to Lemma \ref{L5}.
\begin{proposition}
Let $f:F\rightarrow F'$ be a morphism of algebraic spaces of characteristic $p$ over $S$, let $\Psi_{F},\Psi_{F'}$ be the algebraic Frobenius morphisms of $F,F'$, and let $\Psi_{F/G},\Psi_{F'/G}$ be the relative algebraic Frobenius morphisms of $F,F'$ with respect to $\Psi_{F},\Psi_{F'}$. Then there is a commutative diagram
$$
\xymatrix{
  F \ar[d]_{f} \ar[r]^{\Psi_{F/G}} & F^{(p)} \ar[d]^{f^{(p)}} \\
  F' \ar[r]^{\Psi_{F'/G}} & F'^{(p)}   }
$$
\end{proposition}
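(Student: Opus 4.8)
The plan is to establish the claimed identity $f^{(p)}\circ\Psi_{F/G}=\Psi_{F'/G}\circ f$ of morphisms $F\to F'^{(p)}$ by invoking the universal property of the fibre product $F'^{(p)}=F'\times_{G,\Psi_{G}}G$, in the same spirit as \lemref{L5} and its scheme-theoretic source. Since both composites land in the fibre product $F'^{(p)}$, it suffices to check that they agree after composition with each of the two projections $\textrm{pro}_{F'}\colon F'^{(p)}\to F'$ and $\textrm{pr}_{G}\colon F'^{(p)}\to G$.

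First I would isolate the structural identities satisfied by $f^{(p)}$: since $f^{(p)}$ is the morphism of fibre products induced by $f$ over the base $G$ (i.e. $f^{(p)}=f\times_{G}\textrm{id}_{G}$), it satisfies $\textrm{pro}_{F'}\circ f^{(p)}=f\circ\textrm{pro}_{F}$ and it is compatible with the projections to $G$. Next I would recall the defining relations of the relative algebraic Frobenius morphisms: $\textrm{pro}_{F}\circ\Psi_{F/G}=\Psi_{F}$ with $\textrm{pr}_{G}\circ\Psi_{F/G}$ equal to the structure morphism $F\to G$, and similarly for $\Psi_{F'/G}$ on $F'$. Here one uses that $f$ is a morphism over $G$, so that the structure morphism $F\to G$ is the composite of $f$ with the structure morphism $F'\to G$.

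With these in hand the verification is immediate. Composing with $\textrm{pr}_{G}$, both $\textrm{pr}_{G}\circ f^{(p)}\circ\Psi_{F/G}$ and $\textrm{pr}_{G}\circ\Psi_{F'/G}\circ f$ reduce to the structure morphism $F\to G$. Composing with $\textrm{pro}_{F'}$, the left-hand side equals $f\circ\textrm{pro}_{F}\circ\Psi_{F/G}=f\circ\Psi_{F}$, while the right-hand side equals $\Psi_{F'}\circ f$; these coincide by \proref{A6} applied to $f\colon F\to F'$. Since both projections agree, the universal property of $F'^{(p)}$ forces $f^{(p)}\circ\Psi_{F/G}=\Psi_{F'/G}\circ f$, which is exactly the asserted commutativity.

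This is a purely formal diagram chase, so I do not anticipate a genuine obstacle; the only delicate point is notational bookkeeping, namely keeping straight the two structure morphisms to $G$ (the one on $F$ being the composite of $f$ with the one on $F'$) and being precise about which projection of a fibre product $f^{(p)}$ respects. As an alternative one could pull the whole square back along a surjective \'{e}tale chart $h_{U}\to F$ with $U$ of characteristic $p$, reduce to the scheme-level statement \lemref{L5}, and conclude using that such charts are epimorphisms of sheaves — the device used repeatedly in \S\ref{B3} — but the universal-property argument above is shorter and avoids choosing atlases.
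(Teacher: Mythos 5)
Your proposal is correct and takes essentially the same route as the paper: the paper's proof also starts from \proref{A6} and then obtains the square by factoring the resulting commutative diagram through the pullbacks $F^{(p)}$ and $F'^{(p)}$, which is precisely the universal-property verification you spell out via the two projections $\textrm{pro}_{F'}$ and $\textrm{pr}_{G}$. Your write-up merely makes explicit the compatibility checks that the paper leaves implicit in the phrase ``can be factored''.
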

\begin{proof}
Proposition \ref{A6} gives the following commutative diagram.
$$
\xymatrix{
  F \ar[d]_{\Psi_{F}} \ar[r]^{f} & F' \ar[d]^{\Psi_{F'}} \\
  F \ar[r]^{f} & F'   }
$$
It follows that this diagram can be factored into the following one
$$
\xymatrix{
  F \ar[d]_{f} \ar[r]^{\Psi_{F/G}} & F^{(p)} \ar[d]^{f^{(p)}} \ar[r]^{\textrm{pro}_{F}} & F \ar[d]^{f} \\
  F' \ar[r]^{\Psi_{F'/G}} & F'^{(p)} \ar[r]^{\textrm{pro}_{F'}} & F'   }
$$
Thus, we get $f^{(p)}\circ \Psi_{F/G}=\Psi_{F'/G}\circ f$ as desired.
\end{proof}

Furthermore, the algebraic Frobenius morphisms commute with \'{e}tale localizations.
\begin{proposition}\label{P1}
Let $f:F\rightarrow G$ be a morphism of algebraic spaces in characteristic $p$ over $S$, let $\Psi_{F},\Psi_{G}$ be the algebraic Frobenius morphisms of $F,G$, and let $\Psi_{F/G}:F\rightarrow F^{(p)}$ be the relative algebraic Frobenius of $F$ with respect to $\Psi_{F}$. If $f$ is \'{e}tale, then $\Psi_{F/G}:F\rightarrow F^{(p)}$ is an isomorphism.
\end{proposition}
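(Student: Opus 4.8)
The plan is to reduce the statement to the classical fact that the relative Frobenius of an \'{e}tale morphism of $\mathbb{F}_{p}$-schemes is an isomorphism, and to transport this along an \'{e}tale atlas using Lemma \ref{A14}. First I would choose a surjective \'{e}tale map $\varphi_{F}\colon h_{U}\rightarrow F$ with $U$ of characteristic $p$. Since $f$ is \'{e}tale, \cite[Tag02X1]{StackProject} yields a surjective \'{e}tale map $\varphi_{G}\colon h_{V}\rightarrow G$ together with (by the Yoneda lemma) a morphism of schemes $g\colon U\rightarrow V$ over $S$ with $f\circ\varphi_{F}=\varphi_{G}\circ h(g)$; here $V$ (hence $U$) has characteristic $p$, since the characteristic of $G$ is independent of the chosen \'{e}tale atlas (Lemma \ref{LL2} and the discussion around it, using that characteristic $p$ descends along a faithfully flat morphism as in Lemma \ref{L3}), and $g$ is \'{e}tale because $h(g)$ is a morphism over $G$ between algebraic spaces \'{e}tale over $G$, and \'{e}tale morphisms are cancellable (cf. \cite[Tag02GW]{StackProject}). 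We take $\Psi_{F}$, $\Psi_{G}$ to be the algebraic Frobenius morphisms with respect to $\varphi_{F}$, $\varphi_{G}$, so $\Psi_{F}\circ\varphi_{F}=\varphi_{F}\circ h(\Phi_{U})$ and $\Psi_{G}\circ\varphi_{G}=\varphi_{G}\circ h(\Phi_{V})$.

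Write $R:=V\times_{G}V$ for the (representable) \'{e}tale equivalence relation of the atlas, with projections $s,t\colon R\rightarrow V$, and set $U^{(p)}_{V}:=U\times_{g,V,\Phi_{V}}V$, $R^{(p)}_{V}:=R\times_{s,V,\Phi_{V}}V$. Since $g$ and $s$ are \'{e}tale morphisms of $\mathbb{F}_{p}$-schemes, the scheme-theoretic relative Frobenius morphisms $\Phi_{U/V}\colon U\rightarrow U^{(p)}_{V}$ and $\Phi_{R/V}\colon R\rightarrow R^{(p)}_{V}$ are isomorphisms (see \cite{SGA3}). Recall $F^{(p)}=F\times_{f,G,\Psi_{G}}G$ with projections $\textrm{pro}_{F},\textrm{pr}_{G}$, and that $\Psi_{F/G}$ is characterized by $\textrm{pro}_{F}\circ\Psi_{F/G}=\Psi_{F}$ and $\textrm{pr}_{G}\circ\Psi_{F/G}=f$. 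I would then define $\psi'\colon h_{U^{(p)}_{V}}\rightarrow F^{(p)}$ by the pair $\varphi_{F}\circ\textrm{pr}_{h_{U}}$ (into $F$) and $\varphi_{G}\circ\textrm{pr}_{h_{V}}$ (into $G$), which are compatible over $G$ because inside $h_{U^{(p)}_{V}}=h_{U}\times_{h(g),h_{V},h(\Phi_{V})}h_{V}$ one has $h(g)\circ\textrm{pr}_{h_{U}}=h(\Phi_{V})\circ\textrm{pr}_{h_{V}}$. Using the identities $\textrm{pr}_{h_{U}}\circ h(\Phi_{U/V})=h(\Phi_{U})$ and $\textrm{pr}_{h_{V}}\circ h(\Phi_{U/V})=h(g)$ (definition of $\Phi_{U/V}$) together with the defining relations of $\Psi_{F}$ and of $g$, one checks by comparing the two components into $F\times_{f,G,\Psi_{G}}G$ that the square
$$
\xymatrix{
  h_{U} \ar[d]_{h(\Phi_{U/V})} \ar[r]^{\varphi_{F}} & F \ar[d]^{\Psi_{F/G}} \\
  h_{U^{(p)}_{V}} \ar[r]^{\psi'} & F^{(p)} }
$$
commutes (the $F$-components are $\Psi_{F}\circ\varphi_{F}=\varphi_{F}\circ h(\Phi_{U})$, the $G$-components are $f\circ\varphi_{F}=\varphi_{G}\circ h(g)$).

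The site $\SchS$ has enough points, and $h(\Phi_{U/V})$ is an isomorphism since $\Phi_{U/V}$ is and $h$ is fully faithful. So by Lemma \ref{A14} it suffices to show that $\varphi_{F}$ and $\psi'$ are surjective as maps of sheaves; for $\varphi_{F}$ this is \cite[Tag05VM]{StackProject}. The surjectivity of $\psi'$ is the heart of the matter, and this is the step I expect to be the main obstacle. I would argue it directly: given a section $(a,c)$ of $F^{(p)}$ over a scheme $T$ (so $a\in F(T)$, $c\in G(T)$, $f(a)=\Psi_{G}(c)$), lift $a$ and $c$ \'{e}tale-locally to $\tilde a\in h_{U}$, $\tilde c_{0}\in h_{V}$; then $\Phi_{V}\circ\tilde c_{0}$ and $g\circ\tilde a$ have the same image in $G$, hence determine a section $r$ of $R$, and applying $\Phi_{R/V}^{-1}$ to the pair consisting of the inverse of $r$ in the equivalence relation and of $\tilde c_{0}$ produces a section $r'$ of $R$ with $\Phi_{R}\circ r'$ equal to that inverse and $s\circ r'=\tilde c_{0}$; setting $\tilde c:=t\circ r'$ gives $\varphi_{G}(\tilde c)=c$ and $\Phi_{V}\circ\tilde c=g\circ\tilde a$, so $(\tilde a,\tilde c)$ is a section of $h_{U^{(p)}_{V}}$ mapping to $(a,c)$. (Equivalently, this step asserts that the square obtained by descending $\Phi_{V}$ along $\varphi_{G}$ becomes Cartesian after pulling back along $\varphi_{F}$, the representability of $\Psi_{G}$ being what makes the relevant target a scheme.) With $\psi'$ surjective, Lemma \ref{A14} yields that $\Psi_{F/G}$ is an isomorphism.
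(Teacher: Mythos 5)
Your proof is correct, but it takes a genuinely different route from the paper's. The paper works with a single atlas $h_{U}\rightarrow F$: it invokes Proposition \ref{A32} (representability of the algebraic Frobenius) so that $U\times_{F,\Psi_{F}}F$ and $U\times_{G,\Psi_{G}}G$ are schemes, quotes \cite[Lemma A.2]{Zhu} to see that $U\rightarrow U\times_{F,\Psi_{F}}F$ and the composite $U\rightarrow U\times_{G,\Psi_{G}}G$ are isomorphisms (relative Frobenii of the \'{e}tale maps $U\rightarrow F$ and $U\rightarrow G$), deduces that the middle arrow $U\times_{F,\Psi_{F}}F\rightarrow U\times_{G,\Psi_{G}}G$ is an isomorphism, and finally identifies that arrow as a surjective \'{e}tale base change of $\Psi_{F/G}$ and descends. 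You avoid both Proposition \ref{A32} and the citation to \cite{Zhu}: you work with a compatible pair of atlases and an \'{e}tale lift $g:U\rightarrow V$ of $f$, use the classical scheme-level fact from \cite{SGA3} that $\Phi_{U/V}$ and $\Phi_{R/V}$ (with $R=V\times_{G}V$) are isomorphisms, construct the comparison map $\psi':h_{U^{(p)}_{V}}\rightarrow F^{(p)}$, check the commutative square componentwise (correct), prove by hand that $\psi'$ is a surjection of sheaves --- your lifting argument through $R^{(p)}_{V}$ does work, including the identities $\varphi_{G}(\tilde c)=c$ and $\Phi_{V}\circ\tilde c=g\circ\tilde a$ --- and conclude with Lemma \ref{A14}, exactly as the paper itself does for Theorem \ref{C5}. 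What each approach buys: the paper's proof is much shorter but outsources the scheme-level input to \cite{Zhu} and leaves the final base-change/descent identification terse, while yours is longer but self-contained over classical Frobenius theory, makes explicit which atlases the (atlas-dependent) morphisms $\Psi_{F},\Psi_{G}$ refer to, and isolates the real content in the surjectivity of $\psi'$, which is precisely the covering statement the paper's descent step absorbs. One structural remark: you place the full weight of the conclusion on Lemma \ref{A14} for a square that is merely commutative with surjective horizontals, whereas the paper concludes via a Cartesian identification plus \'{e}tale descent; your square can in fact be checked to be Cartesian by an argument of the same flavor as your surjectivity step (using $\Phi_{(U\times_{F}U)/U}$ and the monomorphism $U\times_{F}U\rightarrow U\times U$), so if you prefer not to rely on Lemma \ref{A14} in its stated generality you can instead finish by descent of isomorphisms along the surjective \'{e}tale $\psi'$, aligning with the paper's last step.
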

\begin{proof}
Choose a surjective \'{e}tale map $h_{U}\rightarrow F$ for $U\in{\rm{Ob}}(({\rm Sch}/S)_{fppf})$ of characteristic $p$. By Proposition \ref{A32}, $U\times_{F,\Psi_{F}}F$ and $U\times_{G,\Psi_{G}}G$ are schemes. It follows from \cite[Lemma A.2]{Zhu} that we have $U\rightarrow U\times_{F,\Psi_{F}}F\rightarrow U\times_{G,\Psi_{G}}G$, where the first map $U\rightarrow U\times_{F,\Psi_{F}}F$ and the composition are isomorphisms. Thus, the second map $U\times_{F,\Psi_{F}}F\rightarrow U\times_{G,\Psi_{G}}G$ is also an isomorphism. Since $U\times_{F,\Psi_{F}}F\rightarrow U\times_{G,\Psi_{G}}G$ is a base change of $F\rightarrow F\times_{G,\Psi_{G}}G$ via $U\times_{G,\Psi_{G}}G\rightarrow G$, we have that $F\rightarrow F\times_{G,\Psi_{G}}G$ is an isomorphism.
\end{proof}

\subsection{Properties of algebraic Frobenius morphisms}\

Next, we wonder if the algebraic Frobenius morphisms of an algebraic space have some desirable properties. We show that analogous to the absolute Frobenius morphisms of schemes, every algebraic Frobenius morphism is necessarily surjective.
\begin{proposition}\label{A30}
Let $F$ be an algebraic space of characteristic $p$ over $S$ with algebraic Frobenius morphism $\Psi_{F}:F\rightarrow F$. Then $\Psi_{F}$ is surjective.
\end{proposition}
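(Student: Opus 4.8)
The plan is to reduce surjectivity of $\Psi_F$ to surjectivity of the absolute Frobenius on a scheme-theoretic \'etale atlas, and then invoke the corresponding fact for schemes (or rings). First I would choose a surjective \'etale map $f\:h_U\to F$ where $U\in\ObSchS$ has characteristic $p$, which exists by hypothesis (since $F$ has characteristic $p$). By Proposition~\ref{A11}, the algebraic Frobenius $\Psi_F$ is the unique morphism fitting into the commutative square
$$
\xymatrix{
  h_{U} \ar[d]_{h(\Phi_{U})} \ar[r]^{f} & F \ar[d]^{\Psi_{F}} \\
  h_{U} \ar[r]^{f} & F,   }
$$
so that $\Psi_F\circ f = f\circ h(\Phi_U)$.

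The key point is that the absolute Frobenius $\Phi_U\:U\to U$ is a universal homeomorphism, hence surjective (it is the identity on underlying topological spaces by definition of the absolute Frobenius), so $h(\Phi_U)$ is surjective as a map of sheaves on $\SchS$. Next, since $f\:h_U\to F$ is a surjective \'etale map of algebraic spaces, it is surjective as a map of sheaves by \cite[Tag05VM]{StackProject}. Therefore the composite $\Psi_F\circ f = f\circ h(\Phi_U)$ is a composition of two surjective maps of sheaves, hence surjective. Finally, surjectivity of $\Psi_F\circ f$ forces surjectivity of $\Psi_F$: whenever a composition $g\circ h$ of morphisms of sheaves is an epimorphism (equivalently, surjective on stalks since $\SchS$ has enough points), the second factor $g$ is an epimorphism as well. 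So $\Psi_F$ is surjective.

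I expect the only genuinely delicate point to be making precise what "surjective" means for a map of algebraic spaces and checking that it is preserved under the two reductions above. This is handled by passing to stalks at a conservative family of points of the big fppf site $\SchS$ (which has enough points, as already used in the proof of Theorem~\ref{C5}): a map of sheaves is surjective iff it is surjective on all such stalks, and for maps of sets the implication "$g\circ h$ surjective $\Rightarrow g$ surjective" is immediate. Alternatively, one can argue directly: a surjective \'etale $f\:h_U\to F$ is in particular an epimorphism of sheaves, and $h(\Phi_U)$ is an epimorphism because $\Phi_U$ is a surjective morphism of schemes and $h$ is continuous for the fppf topology; the composition of epimorphisms is an epimorphism, and if $\Psi_F\circ f$ is an epimorphism then so is $\Psi_F$. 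Either route avoids any computation, so the proof is short once the sheaf-theoretic bookkeeping is set up.
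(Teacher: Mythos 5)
There is a genuine gap, and it occurs at the one step your argument really depends on: the claim that, because $\Phi_{U}$ is surjective (indeed the identity on topological spaces), $h(\Phi_{U})$ is surjective \emph{as a map of fppf sheaves}. This inference is false. An epimorphism of sheaves on $\SchS$ must admit sections fppf-locally, and topological surjectivity of a morphism of schemes gives no such thing. Concretely, take $U=\mathrm{Spec}(A)$ with $A=\mathbb{F}_{p}[\epsilon]/(\epsilon^{2})$. If the identity section of $h_{U}$ lifted through $h(\Phi_{U})$ over an fppf cover, we could find a faithfully flat $A$-algebra $B$ and a ring map $\sigma\colon A\rightarrow B$ with $\sigma(a)^{p}=a_{B}$ for all $a\in A$; then $\sigma(\epsilon)^{2}=\sigma(\epsilon^{2})=0$ forces $\epsilon_{B}=\sigma(\epsilon)^{p}=0$, while faithful flatness makes $A\rightarrow B$ injective, so $\epsilon_{B}\neq 0$, a contradiction. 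Hence $h(\Phi_{U})$ is not an epimorphism of fppf sheaves in general (the atlas $U$ in Proposition \ref{A30} is an arbitrary scheme of characteristic $p$, not assumed perfect or regular), and your composition-of-epimorphisms argument collapses; the stalkwise bookkeeping at the end cannot repair a false input.

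Beyond that, you are proving a different statement from the one asserted. ``Surjective'' in Proposition \ref{A30} means surjective as a morphism of algebraic spaces in the sense of the Stacks Project, and the remark immediately following the proposition in the paper states explicitly that this does \emph{not} necessarily mean surjective as a map of sheaves or presheaves --- which is exactly the stronger property your proposal tries to establish. The paper's proof uses the same commutative square $\Psi_{F}\circ f=f\circ h(\Phi_{U})$ from Proposition \ref{A11}, but stays in the category of algebraic spaces: since $\Phi_{U}$ is a surjective morphism of schemes, $h(\Phi_{U})$ is a surjective morphism of algebraic spaces by \cite[Tag02WJ]{StackProject}, so the composition $\Psi_{F}\circ f$ is surjective, and then the cancellation property for surjectivity of morphisms of algebraic spaces \cite[Tag03MF]{StackProject} yields that $\Psi_{F}$ is surjective. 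If you rewrite your argument with this notion of surjectivity (your use of \cite[Tag05VM]{StackProject} for $f$ and the final cancellation step both have direct analogues there), the skeleton of your reduction is exactly the paper's; as written, however, the sheaf-theoretic version is both unproved and, by the counterexample above, not provable in this generality.
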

\begin{proof}
Let $f:h_{U}\rightarrow F$ be a surjective \'{e}tale map for $U\in{\rm{Ob}}(({\rm Sch}/S)_{fppf})$ of characteristic $p$. Then there is a commutative diagram
$$
\xymatrix{
  h_{U} \ar[d]_{h(\Phi_{U})} \ar[r]^{f} & F \ar[d]^{\Psi_{F}} \\
  h_{U} \ar[r]^{f} & F   }
$$
Since $\Phi_{U}$ is surjective, $h(\Phi_{U})$ is surjective by \cite[Tag02WJ]{StackProject}. It follows from \cite[Tag03MF]{StackProject} that $\Psi_{F}$ is surjective.
\end{proof}
\begin{remark}
This does not necessarily imply that $\Psi_{F}$ is surjective as a map of sheaves or presheaves.
\end{remark}

The following proposition shows that algebraic Frobenius morphisms are representable.
\begin{proposition}\label{A32}
Let $F$ be an algebraic space of characteristic $p$ over $S$ and let $\Psi_{F}:F\rightarrow F$ be the algebraic Frobenius morphism of $F$. Then $\Psi_{F}$ is representable.
\end{proposition}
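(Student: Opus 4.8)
The plan is to verify representability \'etale-locally on the target, exploiting the fact that Frobenius-type morphisms are affine. Fix the surjective \'etale map $f\colon h_U\to F$ with $U\in\ObSchS$ of characteristic $p$ that defines $\Psi_F$, so that $\Psi_F\circ f=f\circ h(\Phi_U)$.

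First I would record that $h_U$ is an algebraic space of characteristic $p$ whose algebraic Frobenius morphism, computed with respect to the \'etale cover $\mathrm{id}\colon h_U\to h_U$, is exactly $h(\Phi_U)$: it is the unique endomorphism of $h_U$ making the defining square of Proposition \ref{A11} commute, and $h(\Phi_U)$ plainly does. I would then apply Proposition \ref{P1} to the \'etale morphism $f\colon h_U\to F$: the relative algebraic Frobenius $\Psi_{h_U/F}\colon h_U\to h_U\times_{f,F,\Psi_F}F$ is an isomorphism. Since $\Psi_{h_U/F}$ is characterized by the property that its composite with the projection $\mathrm{pr}\colon h_U\times_{f,F,\Psi_F}F\to h_U$ equals $h(\Phi_U)$ (and its composite with the other projection equals $f$), transporting along this isomorphism identifies the base change of $\Psi_F$ along $f$ — namely the projection $\mathrm{pr}\colon h_U\times_{f,F,\Psi_F}F\to h_U$ — with the morphism of schemes $h(\Phi_U)\colon h_U\to h_U$. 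In particular $h_U\times_{f,F,\Psi_F}F$ is a scheme, and the base change of $\Psi_F$ along $f$ is an affine (indeed integral) morphism, because the absolute Frobenius of a scheme is affine.

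Finally, because $f\colon h_U\to F$ is surjective \'etale and affineness of a morphism of algebraic spaces may be tested after such a base change of the target (see \cite{StackProject}), the morphism $\Psi_F\colon F\to F$ is representable and affine; in particular it is representable, which is the assertion.

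The key input is Proposition \ref{P1}, which trivializes the base-changed Frobenius over the \'etale chart; the rest is formal manipulation of the universal property of fibre products. The one subtlety to watch is that ``being a scheme'' does not descend along \'etale covers, so one must transfer an \'etale-local property of \emph{morphisms} — here affineness — rather than schematicness of a total space; this is why the argument is routed through the affine property of $h(\Phi_U)$ rather than merely through the isomorphism $h_U\times_{f,F,\Psi_F}F\cong h_U$.
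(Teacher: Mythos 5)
Your route is genuinely different from the paper's, but as it stands it is circular within this paper's logical structure. The key step --- identifying the base change of $\Psi_F$ along the chart $f\colon h_U\to F$ with $h(\Phi_U)$ --- is extracted from Proposition~\ref{P1}, and the paper's proof of Proposition~\ref{P1} begins by invoking Proposition~\ref{A32} (the very statement at hand) to guarantee that $U\times_{F,\Psi_F}F$ and $U\times_{G,\Psi_G}G$ are schemes before applying Lemma~A.2 of \cite{Zhu}. What Proposition~\ref{A11} gives you for free is only that the square $\Psi_F\circ f=f\circ h(\Phi_U)$ commutes; the assertion that it is Cartesian (equivalently, that the projection $h_U\times_{f,F,\Psi_F}F\to h_U$ is $h(\Phi_U)$ up to isomorphism) is precisely the nontrivial content, and it is not available before Proposition~\ref{A32}, or an independent proof of Proposition~\ref{P1}, is in place. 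In particular you cannot simply quote the familiar scheme-theoretic fact that \'etale morphisms are Frobenius-Cartesian, since the target here is an algebraic space whose Frobenius is defined only through the lifting property of Proposition~\ref{A11}.

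By contrast, the paper proves representability directly by a formal manipulation of fibre products, using only that the diagonal of $F$ is representable (so that $h_U\times_F h_V$ is a scheme); no relative Frobenius and no descent enter. The outer architecture of your argument is otherwise reasonable and, once the chart case were secured, would even yield the stronger conclusion that $\Psi_F$ is affine (which the paper proves separately, after Proposition~\ref{A32}, again via Proposition~\ref{P1}): $h(\Phi_U)$ is indeed the algebraic Frobenius of $h_U$ for the identity cover, affineness of a morphism of algebraic spaces can be tested after surjective \'etale base change of the target, and the absolute Frobenius of a scheme is integral, hence affine. But to close the gap you must either give a proof of the Cartesianness of the defining square that does not pass through Proposition~\ref{P1}, or prove representability of $h_T\times_{F,\Psi_F}F$ for arbitrary $T\to F$ directly, as the paper does.
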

\begin{proof}
If $F$ is the empty algebraic space, then the algebraic Frobenius $\Psi_{F}:F\rightarrow F$ is trivially representable. Assume that $F\neq\varnothing$. Let $U,V\in {\rm{Ob}}(({\rm Sch}/S)_{fppf})$ be nonempty schemes and choose $h_{W}\simeq h_{U}\times_{F}h_{V}$ for some $W\in {\rm{Ob}}(({\rm Sch}/S)_{fppf})$ such that $h_{W}(X)\neq\varnothing$ for all $X\in {\rm{Ob}}(({\rm Sch}/S)_{fppf})$. Consider the composition $F\times_{F}h_{U}\rightarrow h_{U}\rightarrow h_{W}$. Then we have
$$(F\times_{F}h_{U})\times_{h_{W}}(h_{U}\times_{F}h_{V})\simeq F\times_{F}h_{U}.$$
Note that
$$(F\times_{F}h_{U})\times_{h_{W}}(h_{U}\times_{F}h_{V})\simeq(h_{U\times_{W}U}\times_{F}F)\times_{F}h_{V}\simeq h_{U\times_{W}U}\times_{F}h_{V}\simeq h_{L}$$
for some $L\in {\rm{Ob}}(({\rm Sch}/S)_{fppf})$. Thus, we get $F\times_{F}h_{U}\simeq h_{L}$ such that $\Psi_{F}$ is representable.
\end{proof}

Moreover, one can show that the algebraic Frobenius morphisms are affine. Therefore, it makes sense to form inverse limits of algebraic spaces whose transition maps are algebraic Frobenius morphisms.
\begin{proposition}
Let $F$ be an algebraic space of characteristic $p$ over $S$. Then the algebraic Frobenius morphism $\Psi_{F}:F\rightarrow F$ is affine.
\end{proposition}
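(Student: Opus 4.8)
The plan is to reduce affineness of $\Psi_{F}$ to affineness of the absolute Frobenius $\Phi_{U}$ of a scheme $U$ occurring in an \'{e}tale atlas of $F$, mirroring the argument of Proposition \ref{P1}. As in the proof of Proposition \ref{A32}, if $F=\varnothing$ then $\Psi_{F}$ is affine trivially, so I assume $F\neq\varnothing$ and fix a surjective \'{e}tale map $f\colon h_{U}\rightarrow F$ with $U\in\ObSchS$ of characteristic $p$ (so $U\neq\varnothing$). By Proposition \ref{A32}, $\Psi_{F}$ is representable; and since the property of being an affine morphism of algebraic spaces is \'{e}tale-local on the base, it suffices to prove that the base change $\textrm{pr}_{2}\colon P\rightarrow h_{U}$ of $\Psi_{F}$ along $f$ is an affine morphism, where $P:=F\times_{\Psi_{F},F,f}h_{U}$. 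Here $P$ is a scheme because $\Psi_{F}$ is representable, so this becomes a statement about morphisms of schemes.

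The second step is to identify $\textrm{pr}_{2}$ with $h(\Phi_{U})$. The commutative square
$$
\xymatrix{
  h_{U} \ar[d]_{h(\Phi_{U})} \ar[r]^{f} & F \ar[d]^{\Psi_{F}} \\
  h_{U} \ar[r]^{f} & F   }
$$
defining $\Psi_{F}$ provides a canonical morphism $\iota\colon h_{U}\rightarrow P$ with $\textrm{pr}_{1}\circ\iota=f$ and $\textrm{pr}_{2}\circ\iota=h(\Phi_{U})$. By \cite[Lemma A.2]{Zhu}, applied exactly as in the proof of Proposition \ref{P1} (legitimate since $f$ is surjective \'{e}tale and $U$ has characteristic $p$), the morphism $\iota$ is an isomorphism. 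Consequently $\textrm{pr}_{2}=h(\Phi_{U})\circ\iota^{-1}$.

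It then remains to recall that $\Phi_{U}\colon U\rightarrow U$ is an affine morphism of schemes: it is the identity on the underlying topological space, so over every affine open $\textrm{Spec}(A)\subset U$ it restricts to the ring Frobenius $\textrm{Spec}(A)\rightarrow\textrm{Spec}(A)$. Since $h$ sends affine morphisms of schemes to affine morphisms of algebraic spaces and affineness is preserved under composition with isomorphisms, $\textrm{pr}_{2}$ is affine; hence $\Psi_{F}$ is affine.

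I expect the main obstacle to be the identification $\iota\colon h_{U}\xrightarrow{\ \sim\ }P$: this is the incarnation, in the setting of algebraic spaces, of the classical fact that the relative Frobenius of an \'{e}tale morphism is an isomorphism, and it is precisely what lets the merely commutative Frobenius square play the role of a Cartesian square when descending affineness. Everything else --- which projection realizes the base change, the \'{e}tale-local nature of affineness, and the affineness of the absolute Frobenius of a scheme --- is routine.
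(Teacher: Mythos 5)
Your proof is correct, and it runs on the same engine as the paper's proof --- the fact, imported from \cite[Lemma A.2]{Zhu} exactly as in Proposition \ref{P1}, that the canonical map from an \'etale atlas $h_{U}$ to the fibre product $F\times_{\Psi_{F},F}h_{U}$ is an isomorphism --- but the packaging is genuinely different. The paper verifies the affineness criterion on an arbitrary affine scheme $Z$ with an \'etale map $Z\rightarrow F$: it first checks via \cite[Tag02X1]{StackProject} that $Z$ has weak characteristic $p$, then applies Proposition \ref{P1} to $Z\rightarrow F$ to get $F\times_{\Psi_{F},F}Z\cong Z$, so the pullback is an affine scheme for trivial reasons and affineness of the absolute Frobenius never enters. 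You instead base change only along the fixed atlas $f\colon h_{U}\rightarrow F$, use representability (Proposition \ref{A32}) and the standard fact that affineness of a representable morphism may be checked after base change to a surjective \'etale atlas, identify $\mathrm{pr}_{2}\colon P\rightarrow h_{U}$ with $h(\Phi_{U})\circ\iota^{-1}$, and then add the elementary observation that $\Phi_{U}$ is affine (identity on the underlying space, ring Frobenius on affine opens). Your route avoids the auxiliary characteristic-$p$ check on the test scheme and makes explicit what the base change of $\Psi_{F}$ actually is, namely the absolute Frobenius of the atlas; the paper's route needs one fewer ingredient, since its pullbacks are outright isomorphisms onto affine schemes. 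Both reductions are legitimate, and your key identification $\iota\colon h_{U}\xrightarrow{\ \sim\ }P$ is precisely the relative-Frobenius-of-an-\'etale-map statement the paper itself invokes, so there is no gap.
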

\begin{proof}
Let $Z$ be an affine scheme over $S$ with \'{e}tale morphism $Z\rightarrow F$. Let $U\rightarrow F$ be a surjective \'{e}tale map where $U\in {\rm{Ob}}(({\rm Sch}/S)_{fppf})$ has characteristic $p$. By \cite[Tag02X1]{StackProject}, there exist $V\in {\rm{Ob}}(({\rm Sch}/S)_{fppf})$ and a commutative diagram
$$
\xymatrix{
  V \ar[d]_{} \ar[r]^{} & U \ar[d]^{} \\
  Z \ar[r]^{} & F   }
$$
where $V\rightarrow Z$ is surjective \'{e}tale. This shows that $Z$ has weak characteristic $p$. Thus, it follows from Proposition \ref{P1} that we have an isomorphism $F\times_{\Psi_{F},F}Z\cong Z$, which shows that $\Psi_{F}$ is affine.
\end{proof}

We feel that the above results have a deeper meaning. So we first study certain kinds of morphisms of algebraic spaces in the following section.
\section{Perfect morphisms}\label{B4}
In this section, we study certain kinds of morphisms of functors that are discovered in the previous section.
\begin{definition}
Let $F,G:({\rm Sch}/S)_{fppf}^{opp}\rightarrow {\bf Sets}$ be functors and let $f:F\rightarrow G$ be a morphism of functors over $S$. Then
\begin{itemize}
\item[(1)]
$f$ is said to be perfect if for every $U\in{\rm{Ob}}(({\rm Sch}/S)_{fppf})$ and any $\xi\in G(U)$, the functor $h_{U}\times_{G}F$ is represented by a perfect scheme $W\in{\rm{Ob}}(({\rm Sch}/S)_{fppf})$.
\item[(2)]
$f$ is said to be quasi-perfect if there exists perfect scheme $U\in{\rm{Ob}}(({\rm Sch}/S)_{fppf})$ such that for any $\xi\in G(U)$, the functor $h_{U}\times_{G}F$ is represented by a perfect scheme $W\in{\rm{Ob}}(({\rm Sch}/S)_{fppf})$.
\item[(3)]
$f$ is said to be semiperfect if there exists $U\in{\rm{Ob}}(({\rm Sch}/S)_{fppf})$ such that for any $\xi\in G(U)$, the functor $h_{U}\times_{G}F$ is represented by a perfect scheme $W\in{\rm{Ob}}(({\rm Sch}/S)_{fppf})$.
\item[(4)]
$f$ is said to be weakly perfect if for every perfect scheme $U\in{\rm{Ob}}(({\rm Sch}/S)_{fppf})$ and any $\xi\in G(U)$, the functor $h_{U}\times_{G}F$ is represented by a perfect scheme $W\in{\rm{Ob}}(({\rm Sch}/S)_{fppf})$.
\end{itemize}
If $f$ is perfect (resp. quasi-perfect, semiperfect, weakly perfect), then we say that $F$ is perfect (resp. quasi-perfect, semiperfect, weakly perfect) over $G$.
\end{definition}
Obviously, every perfect morphism of functors is quasi-perfect, semiperfect, and weakly perfect. Meanwhile, every quasi-perfect morphism of functors is semiperfect. And every weakly perfect morphism is both quasi-perfect and semiperfect.

Perfect (resp. quasi-perfect, semiperfect, weakly perfect) morphisms of functors are stable under compositions.
\begin{proposition}\label{A24}
Let $F,G,H:({\rm Sch}/S)_{fppf}^{opp}\rightarrow {\bf{Sets}}$ be functors and let $f:F\rightarrow G$ and $g:G\rightarrow H$ be morphisms of functors over $S$.
\begin{itemize}
\item[(1)]
If $f,g$ are perfect, then the composition $g\circ f:F\rightarrow H$ is also perfect.
\item[(2)]
If $f,g$ are quasi-perfect, then the composition $g\circ f:F\rightarrow H$ is also quasi-perfect.
\item[(3)]
If $f,g$ are semiperfect, then the composition $g\circ f:F\rightarrow H$ is also semiperfect.
\item[(4)]
If $f,g$ are weakly perfect, then the composition $g\circ f:F\rightarrow H$ is also weakly perfect.
\end{itemize}
\end{proposition}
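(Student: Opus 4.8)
The plan is to reduce all four statements to one formal input: the associativity of fibre products together with the Yoneda lemma. For a composable pair $f\colon F\to G$, $g\colon G\to H$ of morphisms of functors over $S$, any $U\in\Ob(({\rm Sch}/S)_{fppf})$ and any section $\xi\in H(U)$, there is a canonical isomorphism
\[
h_{U}\times_{H}F\;\cong\;(h_{U}\times_{H}G)\times_{G}F ,
\]
where the second fibre product is formed along the projection $h_{U}\times_{H}G\to G$. So in each case I would argue in four steps: (i) apply the hypothesis on $g$ to rewrite $h_{U}\times_{H}G$ as $h_{W}$ for a perfect scheme $W$; (ii) observe that the projection $h_{W}\cong h_{U}\times_{H}G\to G$ corresponds under the Yoneda lemma to a section $\eta\in G(W)$; (iii) apply the hypothesis on $f$ to the pair $(W,\eta)$ to rewrite $h_{W}\times_{G}F$ as $h_{W'}$ for a perfect scheme $W'$; (iv) conclude that $h_{U}\times_{H}F\cong h_{W'}$, which is exactly what is required for $g\circ f$ to have the relevant property.

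For (1) this goes through verbatim: the hypotheses on $f$ and on $g$ each quantify over all schemes, so step (i) applies to the given $U$ and step (iii) applies to the newly produced $W$ with no compatibility to verify, and the resulting conclusion holds for every $U$ and every $\xi\in H(U)$. For (4) I would run the identical argument but starting from a perfect scheme $U$: step (i) uses that $g$ is weakly perfect at the perfect scheme $U$ and produces a perfect scheme $W$; since $W$ is again perfect, step (iii) uses that $f$ is weakly perfect at $W$; the conclusion then holds for every perfect $U$ and every $\xi$, which is exactly weak perfectness of $g\circ f$. Thus (1) and (4) are immediate.

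Cases (2) and (3) are where the actual work lies, since the definition of a quasi-perfect (resp.\ semiperfect) morphism provides only one fixed witness scheme over which the fibre product is guaranteed to be representable by a perfect scheme, whereas the composite must again be witnessed by a single scheme. I would take as candidate witness for $g\circ f$ the witness scheme $V_{0}$ of $g$ (itself a perfect scheme in case (2)), so that for each $\xi\in H(V_{0})$ step (i) produces a perfect scheme $W_{\xi}$ with $h_{V_{0}}\times_{\xi,H}G\cong h_{W_{\xi}}$, and hence $h_{V_{0}}\times_{\xi,H}F\cong h_{W_{\xi}}\times_{G}F$. The main obstacle is step (iii) in this setting: the hypothesis on $f$ speaks only about its own witness scheme $U_{0}$, which is a priori unrelated to the scheme $W_{\xi}$ coming out of the hypothesis on $g$, so one cannot directly feed $W_{\xi}$ into $f$'s defining property. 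Resolving this --- for instance by an argument that represents $h_{W_{\xi}}\times_{G}F$ in terms of the already-understood $h_{U_{0}}\times_{G}F$, bringing the two schemes together over $G$ and appealing to Proposition~\ref{A1} on fibre products of perfect schemes --- is what I expect to be the crux of the proof. Once it is in place, cases (2) and (3) follow by the same bookkeeping as (1) and (4), with the conclusion recorded for the single witness scheme $V_{0}$ and all $\xi\in H(V_{0})$.
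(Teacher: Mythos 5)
Your treatment of parts (1) and (4) is correct and complete, and it takes a genuinely different route from the paper: you factor $h_{U}\times_{H}F\cong(h_{U}\times_{H}G)\times_{G}F$, apply the hypothesis on $g$ to represent $h_{U}\times_{H}G$ by a perfect scheme $W$, read off a section $\eta\in G(W)$ by Yoneda, and then apply the hypothesis on $f$ at $(W,\eta)$; this sequential base-change argument needs no auxiliary input. The paper instead starts from sections $\xi\in G(U)$ and $\xi'\in H(U)$, forms $(h_{U}\times_{G}F)\times_{h_{U}}(h_{U}\times_{H}G)\simeq h_{W\times_{U}W'}$, invokes Proposition~\ref{A1} to get perfectness of $W\times_{U}W'$, and then identifies this functor with $h_{U}\times_{H}F$. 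Your version is cleaner and avoids Proposition~\ref{A1} entirely in these two cases.

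For parts (2) and (3), however, what you have written is not a proof. You correctly isolate the crux --- the witness scheme $U_{0}$ of $f$ and the perfect scheme $W_{\xi}$ produced by the hypothesis on $g$ are a priori unrelated, so $f$'s defining property cannot be applied to $W_{\xi}$ --- but you leave that step unresolved, only gesturing at relating $h_{W_{\xi}}\times_{G}F$ to $h_{U_{0}}\times_{G}F$ over $G$ and appealing to Proposition~\ref{A1}. That unperformed step is where all the content of (2) and (3) lies, so as it stands your proposal establishes only (1) and (4). For comparison, the paper handles (2)--(3) by using both witnesses at once: it writes $h_{W}\simeq h_{U}\times_{\xi,G}F$ (over $f$'s witness $U$) and $h_{W'}\simeq h_{V}\times_{\xi',H}G$ (over $g$'s witness $V$), forms $(h_{U}\times_{G}F)\times_{h_{V}}(h_{V}\times_{H}G)\simeq h_{W\times_{V}W'}$, which is perfect by Proposition~\ref{A1}, and identifies this with $h_{U}\times_{H}F$, so the conclusion is recorded over $f$'s witness $U$ rather than over $g$'s witness $V_{0}$ as you propose. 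Note, though, that this identification presupposes a map from $h_{U}\times_{G}F$ to $h_{V}$ and compatibility of the two structure maps to $H$ (the paper's justification that $h_{V}(X)\neq\varnothing$ for all $X$ whenever $V\neq\varnothing$ is not valid in general), so even following the paper's route you would still have to supply essentially the verifications you flagged as missing; the difficulty does not disappear by switching witnesses.
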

\begin{proof}
(1) Let $U\in{\rm{Ob}}(({\rm Sch}/S)_{fppf})$ and $\xi\in G(U),\xi'\in H(U)$. Choose $h_{W}\simeq h_{U}\times_{\xi,G}F$ and $h_{W'}\simeq h_{U}\times_{\xi',H}G$ for some perfect schemes $W,W'\in{\rm{Ob}}(({\rm Sch}/S)_{fppf})$. Then we have
$$(h_{U}\times_{G}F)\times_{h_{U}}(h_{U}\times_{H}G)\simeq h_{W}\times_{h_{U}}h_{W'}=h_{W\times_{U}W'},$$
where $W\times_{U}W'$ is perfect due to Proposition \ref{A1}. Note that
$$(h_{U}\times_{G}F)\times_{h_{U}}(h_{U}\times_{H}G)\simeq h_{U}\times_{G}F\times_{H}G\simeq h_{U}\times_{H}F.$$
Hence, we get $h_{U}\times_{H}F\simeq h_{W\times_{U}W'}$ as desired. This implies that $g\circ f$ is perfect.

(2) Let $U,V\in{\rm{Ob}}(({\rm Sch}/S)_{fppf})$ be perfect schemes and $\xi\in G(U),\xi'\in H(U)$. Choose isomorphisms $h_{W}\simeq h_{U}\times_{\xi,G}F$ and $h_{W'}\simeq h_{V}\times_{\xi',H}G$ for some perfect schemes $W,W'\in{\rm{Ob}}(({\rm Sch}/S)_{fppf})$. Since $V\neq\varnothing$, $h_{V}(X)\neq\varnothing$ for all $X\in {\rm{Ob}}(({\rm Sch}/S)_{fppf})$, it follows that the composition $h_{U}\times_{G}F\rightarrow h_{U}\rightarrow h_{V}$ makes sense. Then we have
$$(h_{U}\times_{G}F)\times_{h_{V}}(h_{V}\times_{H}G)\simeq h_{W}\times_{h_{V}}h_{W'}=h_{W\times_{V}W'}.$$
Further, Proposition \ref{A1} shows that $W\times_{V}W'$ is perfect, and there are natural isomorphisms
$$(h_{U}\times_{G}F)\times_{h_{V}}(h_{V}\times_{H}G)\simeq h_{U}\times_{G}F\times_{H}G\simeq h_{U}\times_{H}F.$$ Thus, we obtain $h_{U}\times_{H}F\simeq h_{W\times_{V}W'}$ such that $g\circ f$ is quasi-perfect.

(3) Let $U,V\in{\rm{Ob}}(({\rm Sch}/S)_{fppf})$ and let $\xi\in G(U),\xi'\in H(U)$. Choose isomorphisms $h_{W}\simeq h_{U}\times_{\xi,G}F$ and $h_{W'}\simeq h_{V}\times_{\xi',H}G$ for some perfect schemes $W,W'\in{\rm{Ob}}(({\rm Sch}/S)_{fppf})$. Clearly, $U,V\neq\varnothing$. This shows that one can make the composition $h_{U}\times_{G}F\rightarrow h_{U}\rightarrow h_{V}$. Then  $$(h_{U}\times_{G}F)\times_{h_{V}}(h_{V}\times_{H}G)\simeq h_{W}\times_{h_{V}}h_{W'}.$$
 Since $({\rm Sch}/S)_{fppf}$ admits fibre products, $h_{W}\times_{h_{V}}h_{W'}=h_{W\times_{V}W'}$. Further, by Proposition \ref{A1}, $W\times_{V}W'$ is perfect, and there are natural isomorphisms $$(h_{U}\times_{G}F)\times_{h_{V}}(h_{V}\times_{H}G)\simeq h_{U}\times_{G}F\times_{H}G\simeq h_{U}\times_{H}F.$$
 Thus, we obtain $h_{U}\times_{H}F\simeq h_{W\times_{V}W'}$ such that $g\circ f$ is semiperfect.

(4) This is similar to the proof of (1).
\end{proof}

Perfect (resp. quasi-perfect, semiperfect, weakly perfect) morphisms of functors are stable under arbitrary base change.
\begin{proposition}\label{P4}
Let $F,G,H:({\rm Sch}/S)_{fppf}^{opp}\rightarrow {\bf Sets}$ be functors, and gives the following fibre product diagram
$$
\xymatrix{
  F\times_{H}G \ar[d]_{a'} \ar[r]^{\ \ b'} & F \ar[d]^{a} \\
  G \ar[r]^{b} & H   }
$$
If $a$ is perfect (resp. quasi-perfect, semiperfect, weakly perfect), then the base change $a'$ is also perfect (resp. quasi-perfect, semiperfect, weakly perfect).
\end{proposition}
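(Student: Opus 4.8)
The plan is to deduce all four cases from a single cancellation identity for fibre products, after which the case analysis is purely formal. Fix a scheme $V\in\ObSchS$ together with an element $\eta\in G(V)$, i.e. a morphism $h_V\to G$; since $a'$ is the projection $F\times_H G\to G$, what must be understood is the functor $h_V\times_{\eta,G}(F\times_H G)$. First I would record the canonical isomorphism
$$h_V\times_{\eta,G}(F\times_H G)\;\simeq\;h_V\times_{b\circ\eta,\,H}F,$$
where $b\circ\eta\in H(V)$ is the composite $h_V\xrightarrow{\eta}G\xrightarrow{b}H$ and $H$ carries the structure map $a\colon F\to H$ on the right. This is checked on $X$-points: both sides classify triples $(v,f_0,g_0)$ with $v\in h_V(X)$, $f_0\in F(X)$, $g_0\in G(X)$ satisfying $\eta(v)=g_0$ and $a(f_0)=b(g_0)$, and eliminating $g_0=\eta(v)$ identifies this set with the pairs $(v,f_0)$ such that $a(f_0)=b(\eta(v))$; moreover the projection to $h_V$ is the same on both sides.

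Granting this, the four assertions follow at once. If $a$ is perfect, then for every $V\in\ObSchS$ and every $\eta\in G(V)$ the element $b\circ\eta$ lies in $H(V)$, so $h_V\times_{b\circ\eta,H}F$ is represented by a perfect scheme; hence so is $h_V\times_{\eta,G}(F\times_H G)$, and $a'$ is perfect. If $a$ is quasi-perfect, let $U\in\ObSchS$ be the perfect scheme witnessing this; then for each $\eta\in G(U)$ we have $b\circ\eta\in H(U)$, so $h_U\times_{b\circ\eta,H}F$ is represented by a perfect scheme, and $U$ witnesses quasi-perfectness of $a'$. The semiperfect case is identical, only dropping the requirement that $U$ be perfect. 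Finally, if $a$ is weakly perfect, then for every perfect scheme $V\in\ObSchS$ and every $\eta\in G(V)$ the functor $h_V\times_{b\circ\eta,H}F$ is represented by a perfect scheme because $V$ is perfect, so $a'$ is weakly perfect.

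I expect the only genuine point to be the cancellation identity of the first paragraph, and within it the bookkeeping of structure maps: one must confirm that the morphism $h_V\to H$ appearing on the right is precisely $b\circ\eta$ and that the two projections to $h_V$ agree, so that representability by a perfect scheme is honestly transported across the isomorphism. Everything after that is a direct appeal to the definitions of perfect, quasi-perfect, semiperfect, and weakly perfect morphisms, so no further obstacle is anticipated; in particular the argument does not require $F$, $G$, or $H$ to be algebraic spaces, only to be functors, matching the generality of the statement.
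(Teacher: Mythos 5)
Your proposal is correct and follows essentially the same route as the paper's own proof, which also rests on the cancellation isomorphism $h_{U}\times_{G}(F\times_{H}G)\simeq h_{U}\times_{H}F$ followed by the same four-case application of the definitions. If anything, your explicit bookkeeping that the structure map $h_{V}\to H$ is $b\circ\eta$ is slightly more careful than the paper's wording, which quantifies over $\xi\in H(U)$ without pinning it to $b\circ\xi'$.
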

\begin{proof}
Let $U\in{\rm{Ob}}(({\rm Sch}/S)_{fppf})$ and let $\xi\in H(U)$. Choose $h_{W}\simeq h_{U}\times_{H}F$ for some perfect scheme $W\in{\rm{Ob}}(({\rm Sch}/S)_{fppf})$. Then for any $\xi'\in G(U)$, the natural isomorphisms
$$h_{U}\times_{G}(F\times_{H}G)\simeq h_{U}\times_{H}F\simeq h_{W}$$
shows that $a'$ is perfect.

If $U\in{\rm{Ob}}(({\rm Sch}/S)_{fppf})$ is a perfect scheme, then for any $\xi\in G(U)$, there are also natural isomorphisms
$$h_{U}\times_{G}(F\times_{H}G)\simeq h_{U}\times_{H}F\simeq h_{W},$$
which shows that $a'$ is quasi-perfect. Similarly, we can show that if $a$ is semiperfect (resp. weakly perfect), then $a'$ is also semiperfect (resp. weakly perfect).
\end{proof}

As a consequence, we have the following proposition.
\begin{proposition}\label{P5}
Let $F_{i},G_{i}:({\rm Sch}/S)_{fppf}^{opp}\rightarrow {\bf Sets}$ be functors and $a_{i}:F_{i}\rightarrow G_{i}$ be morphisms of functors over $S$ for $i=1,2$. If each $a_{i}$ is perfect (resp. quasi-perfect, semiperfect, weakly perfect), then
$$
a_{1}\times a_{2}:F_{1}\times F_{2}\rightarrow G_{1}\times G_{2}
$$
is perfect (resp. quasi-perfect, semiperfect, weakly perfect).
\end{proposition}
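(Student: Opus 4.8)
The plan is to exhibit $a_{1}\times a_{2}$ as a composition of two base changes, one of $a_{1}$ and one of $a_{2}$, and then to invoke Proposition~\ref{P4} (stability under base change) together with Proposition~\ref{A24} (stability under composition). Throughout, products of functors are products of sheaves on $({\rm Sch}/S)_{fppf}$, i.e.\ fibre products over the final sheaf $h_{S}$.

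First I would write
$$
a_{1}\times a_{2}\;=\;(a_{1}\times \mathrm{id}_{G_{2}})\circ(\mathrm{id}_{F_{1}}\times a_{2}),
$$
where $\mathrm{id}_{F_{1}}\times a_{2}\colon F_{1}\times F_{2}\rightarrow F_{1}\times G_{2}$ and $a_{1}\times\mathrm{id}_{G_{2}}\colon F_{1}\times G_{2}\rightarrow G_{1}\times G_{2}$. Then I would check that each factor is a base change. For the first, the square
$$
\xymatrix{
  F_{1}\times F_{2} \ar[d]_{\mathrm{id}_{F_{1}}\times a_{2}} \ar[r] & F_{2} \ar[d]^{a_{2}} \\
  F_{1}\times G_{2} \ar[r] & G_{2}
}
$$
with horizontal arrows the projections is Cartesian: there is a canonical isomorphism $F_{1}\times F_{2}\cong (F_{1}\times G_{2})\times_{G_{2}}F_{2}$ sending a section $(x,w)$ to $((x,a_{2}(w)),w)$. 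Hence $\mathrm{id}_{F_{1}}\times a_{2}$ is the base change of $a_{2}$ along the projection $F_{1}\times G_{2}\rightarrow G_{2}$. Similarly, the square
$$
\xymatrix{
  F_{1}\times G_{2} \ar[d]_{a_{1}\times\mathrm{id}_{G_{2}}} \ar[r] & F_{1} \ar[d]^{a_{1}} \\
  G_{1}\times G_{2} \ar[r] & G_{1}
}
$$
is Cartesian, via $F_{1}\times G_{2}\cong (G_{1}\times G_{2})\times_{G_{1}}F_{1}$, so $a_{1}\times\mathrm{id}_{G_{2}}$ is the base change of $a_{1}$ along $G_{1}\times G_{2}\rightarrow G_{1}$. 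One verifies directly that the displayed composition recovers $a_{1}\times a_{2}$.

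Finally, suppose each $a_{i}$ is perfect (resp.\ quasi-perfect, semiperfect, weakly perfect). By Proposition~\ref{P4} the two factors $\mathrm{id}_{F_{1}}\times a_{2}$ and $a_{1}\times\mathrm{id}_{G_{2}}$ are again perfect (resp.\ quasi-perfect, semiperfect, weakly perfect), and then Proposition~\ref{A24} shows that their composition $a_{1}\times a_{2}$ has the same property, as claimed. There is no serious obstacle here; the only point requiring care is the bookkeeping of the two fibre-product identifications and the fact that the products in the functor category are taken over the final sheaf $h_{S}$, so that $F_{1}\times_{G_{1}}(G_{1}\times G_{2})$ really is $F_{1}\times G_{2}$ and likewise for the other square.
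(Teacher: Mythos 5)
Your proposal is correct and follows essentially the same argument as the paper: factor $a_{1}\times a_{2}$ through an intermediate mixed product (the paper uses $G_{1}\times F_{2}$, you use $F_{1}\times G_{2}$, which is the same decomposition up to symmetry), identify each factor as a base change of $a_{1}$ or $a_{2}$, and conclude by Propositions~\ref{P4} and~\ref{A24}.
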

\begin{proof}
The morphism $a_{1}\times a_{2}$ can be expressed as the composition $F_{1}\times F_{2}\rightarrow G_{1}\times F_{2}\rightarrow G_{1}\times G_{2}$. Note that $F_{1}\times F_{2}=F_{1}\times_{G_{1}}(G_{1}\times F_{2})$ and $G_{1}\times F_{2}=(G_{1}\times G_{2})\times_{G_{2}}F_{2}$. Thus, $F_{1}\times F_{2}\rightarrow G_{1}\times F_{2}$ is the base change of $a_{1}$ via $G_{1}\times F_{2}\rightarrow G_{1}$ and $G_{1}\times F_{2}\rightarrow G_{1}\times G_{2}$ is the base change of $a_{2}$ via $G_{1}\times G_{2}\rightarrow G_{2}$. Then the results follow by Proposition \ref{A24} and \ref{P4}.
\end{proof}

We give the following theorem which specifies that there are equivalences between perfect algebraic spaces and perfect morphisms of algebraic spaces.
\begin{theorem}\label{C2}
Let $F$ be an algebraic space over $S$. Then
\begin{itemize}
\item[(1)]
$F$ is quasi-perfect if and only if there exists a perfect scheme $U\in{\rm{Ob}}(({\rm Sch}/S)_{fppf})$ such that every map $\varphi_{F}:h_{U}\rightarrow F$ is a quasi-perfect morphism of algebraic spaces over $S$. Equivalently, the diagonal $\Delta:F\rightarrow F\times F$ is quasi-perfect.
\item[(2)]
$F$ is semiperfect if and only if there exists $U\in{\rm{Ob}}(({\rm Sch}/S)_{fppf})$ such that every map $\varphi_{F}:h_{U}\rightarrow F$ is a semiperfect morphism of algebraic spaces over $S$. Equivalently, the diagonal $\Delta:F\rightarrow F\times F$ is semiperfect.
\item[(3)]
$F$ is strongly perfect if and only if for every perfect scheme $U\in{\rm{Ob}}(({\rm Sch}/S)_{fppf})$, the maps $\varphi_{F}:h_{U}\rightarrow F$ are weakly perfect. Equivalently, the diagonal $\Delta:F\rightarrow F\times F$ is weakly perfect.
\end{itemize}
\end{theorem}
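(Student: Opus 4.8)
The plan is to treat parts (1), (2) and (3) in parallel and, in each, to prove the biconditional by establishing two equivalences: first, $F$ is quasi-perfect (resp.\ semiperfect, strongly perfect) if and only if the maps $h_U\to F$ out of the relevant witness scheme are quasi-perfect (resp.\ semiperfect, weakly perfect) morphisms; second, this in turn holds if and only if the diagonal $\Delta_F\colon F\to F\times F$ is quasi-perfect (resp.\ semiperfect, weakly perfect). The tools are \defref{D1}, the definition of a perfect-type morphism, \proref{A1}, \proref{P4} and \proref{A24}.

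The equivalence between the condition on $F$ and the condition on the maps $\varphi_F\colon h_U\to F$ is a rearrangement of the definitions. For $b\in F(V)$, the morphism $\varphi_b\colon h_V\to F$ is quasi-perfect exactly when there is a perfect scheme $U$ with $h_U\times_{a,F,b}h_V$ a perfect scheme for every $a\in F(U)$, and $h_U\times_{a,F,b}h_V=h_V\times_{b,F,a}h_U$ is precisely the fibre product appearing in \defref{D1}(2); so if $F$ is quasi-perfect with witness schemes $U,V$ then every map $h_V\to F$ is a quasi-perfect morphism with witness $U$, and conversely a witness scheme for the condition on the $\varphi_F$'s supplies what is required of $F$. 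The semiperfect and strongly perfect cases are formally identical with ``perfect scheme'' replaced by ``scheme'' and by ``every perfect scheme'' in the relevant quantifiers.

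For the passage to the diagonal I would use the standard identity, valid whenever $\Delta_F$ is representable, that for schemes $U,V$ with $a\in F(U)$, $b\in F(V)$,
$$h_U\times_{a,F,b}h_V\;\cong\;(h_U\times h_V)\times_{F\times F,\,\Delta_F}F\;=\;h_{U\times_S V}\times_{F\times F,\,\Delta_F}F,$$
which exhibits these fibre products as base changes of $\Delta_F$; dually, for $a,b\in F(U)$ the equalizer satisfies $h_U\times_{(a,b),F\times F,\,\Delta_F}F\cong(h_U\times_{a,F,b}h_U)\times_{h_{U\times_S U},\,\Delta_U}h_U$. To prove the implication from the condition on $F$ to the condition on $\Delta_F$, I would let $a,b$ range over the witness scheme $U$, note that $U\times_S U$ is again perfect by \proref{A1} (in parts (1) and (3)), and conclude from \proref{A1} that the equalizer above, being a fibre product of the perfect schemes $h_U\times_{a,F,b}h_U$, $h_{U\times_S U}$ and $h_U$ over $h_{U\times_S U}$, is perfect. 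For the converse I would run a fibre product $h_U\times_{a,F,b}h_V$ through the first identity: it is the base change of $\Delta_F$ along $h_{U\times_S V}\to F\times F$, with $U\times_S V$ perfect by \proref{A1}. Part (3) then closes at once: since ``weakly perfect'' quantifies over every perfect scheme, taking the perfect scheme to be $U\times_S V$ itself shows directly that $h_U\times_{a,F,b}h_V$ is a perfect scheme, and \proref{P4} records the morphism-level consequence.

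I expect the main obstacle to be the converse direction in parts (1) and (2): a quasi-perfect, resp.\ semiperfect, $\Delta_F$ is guaranteed only one witness scheme $M$, whereas one must control base changes of $\Delta_F$ along maps out of the a priori larger scheme $M\times_S M$ (still perfect, by \proref{A1}, in part (1)), which is not literally $M$; the same mismatch reappears when one tries to uniformize the witnesses in the condition on the maps $\varphi_F$. To close the gap I would take the witness scheme to be, or to dominate, an \'{e}tale atlas $h_M\to F$, so that $h_M\times_F h_M\to h_M\times_S h_M$ is the \'{e}tale equivalence relation presenting $F$ and every fibre product $h_T\times_F h_{T'}$ is, after a surjective \'{e}tale cover of $h_{T\times_S T'}$, a base change of it; then, via \proref{P4} and \proref{A24}, the problem reduces to a flat-descent statement: a scheme which becomes a fibre product of perfect schemes over a perfect scheme after an \'{e}tale (hence fppf) cover by perfect schemes is itself perfect, which rests on \proref{A1} together with the fact that open subschemes and \'{e}tale covers of perfect schemes are perfect (as in \proref{PP1}). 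Carrying out this descent, and in particular verifying that the equalizer above remains perfect when the witness scheme is only an arbitrary scheme in the semiperfect case, is the delicate step; the remaining work is bookkeeping across the three quantifier patterns.
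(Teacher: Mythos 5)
Your core argument is the same as the paper's. Both rest on (i) unwinding \defref{D1} against the definition of a quasi-/semi-/weakly perfect morphism (the paper's ``in other words'' step), (ii) the identity $h_{U}\times_{a,F,b}h_{V}\cong h_{U\times V}\times_{a\times b,\,F\times F,\,\Delta}F$ exhibiting these fibre products as base changes of the diagonal, and (iii) for the converse diagonal direction, exactly the manipulation you phrase as the equalizer identity: the paper forms $W'=W\times_{(p_{1},p_{2}),\,U\times V}U$ and invokes \proref{A1} to see it is perfect, which is your $(h_{U}\times_{a,F,b}h_{U})\times_{h_{U\times U},\,\Delta_{U}}h_{U}$. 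Part (3) is handled identically in both. The genuine difference lies in what you single out as the delicate step. The paper sees no obstacle there: in the direction ``$\Delta$ quasi-perfect $\Rightarrow$ $F$ quasi-perfect'' it applies the hypothesis directly to the test object $h_{U\times V}$ for arbitrary perfect $U,V$, i.e.\ it uses quasi-perfectness of $\Delta$ as though the witness scheme could be chosen at will (in effect as weak perfectness), and for part (2) it simply says the diagonal argument is ``similar to (1)''. Relative to the definitions as literally stated, your concern is well founded: a quasi-perfect (resp.\ semiperfect) diagonal supplies one witness scheme $M$, while the argument needs base changes of $\Delta$ along maps out of $h_{M\times M}$, and in the semiperfect case the equalizer is a fibre product over a possibly non-perfect $U\times U$ with one leg non-perfect, where \proref{A1} does not apply. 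So on this point your proposal is more careful than the paper rather than less. The price is that your proposed repair --- taking the witness to be (or dominate) an \'{e}tale atlas and descending perfectness along surjective \'{e}tale covers, using that the relative Frobenius of an \'{e}tale morphism is an isomorphism --- is machinery that appears nowhere in the paper's proof, and you leave it as a sketch; if you pursue this route, that descent lemma (perfectness of a scheme can be checked on a surjective \'{e}tale cover by perfect schemes) is the one statement you must actually prove, since the paper's own proof offers nothing at exactly the point it would be needed.
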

\begin{proof}
(1) Let $U,V\in{\rm{Ob}}(({\rm Sch}/S)_{fppf})$ be perfect schemes and $\xi\in F(V),\xi'\in F(U)$ such that the functor $h_{V}\times_{F}h_{U}$ is represented by a perfect scheme. In other words, there exists perfect scheme $U\in{\rm{Ob}}(({\rm Sch}/S)_{fppf})$ such that the map $\varphi_{F}:h_{U}\rightarrow F$ is quasi-perfect. Conversely, there exists a perfect scheme $V\in{\rm{Ob}}(({\rm Sch}/S)_{fppf})$ such that for any $\xi\in F(V)$ the functor $h_{V}\times_{F}h_{U}$ is represented by a perfect scheme. In other words, there exist perfect schemes $U,V\in{\rm{Ob}}(({\rm Sch}/S)_{fppf})$ and $\xi\in F(V),\xi'\in F(U)$ such that the functor $h_{V}\times_{F}h_{U}$ is represented a perfect scheme. Hence, $F$ is quasi-perfect.

Equivalently, suppose that $F\rightarrow F\times F$ is quasi-perfect. Let $U,V\in{\rm{Ob}}(({\rm Sch}/S)_{fppf})$ be perfect schemes and let $c:h_{U\times V}\rightarrow F\times F$ be the map $a\times b:h_{U}\times h_{V}\rightarrow F\times F$ for any $a\in F(U),b\in F(V)$. Then the fibre product $h_{U\times V}\times_{c,F\times F,\Delta}F\simeq h_{W}$ for a perfect scheme $W\in {\rm{Ob}}(({\rm Sch}/S)_{fppf})$. Hence there exist morphisms $h_{W}\rightarrow h_{U\times V}$ and $h_{W}\rightarrow F$ such that the diagram
$$
\xymatrix{
  h_{W} \ar[d]_{} \ar[r]^{} & h_{U\times V}  \ar[d]^{a\times b} \\
  F \ar[r]^{} & F\times F   }
$$
is Cartesian. By \cite[Tag0022]{StackProject}, we have $h_{W}=h_{U}\times_{a,F,b}h_{V}$ so that $F$ is quasi-perfect.

On the other hand, assume that every map $\varphi_{F}:h_{U}\rightarrow F$ is a quasi-perfect morphism for a perfect scheme $U\in{\rm{Ob}}(({\rm Sch}/S)_{fppf})$. Then for some perfect scheme $V\in{\rm{Ob}}(({\rm Sch}/S)_{fppf})$ and any $a\in F(V)$, there exists a perfect scheme $W\in{\rm{Ob}}(({\rm Sch}/S)_{fppf})$ such that $h_{U}\times_{\varphi_{F},F,a}h_{V}\simeq h_{W}$. By Yoneda lemma, the compositions $$h_{W}\rightarrow h_{U}\times_{\varphi_{F},F,a}h_{V}\rightarrow h_{U}\ {\rm and}\ h_{W}\rightarrow h_{U}\times_{\varphi_{F},F,a}h_{V}\rightarrow h_{V}$$
come from the canonical maps $p_{1}:W\rightarrow U$ and $p_{2}:W\rightarrow V$. Consider the fibre product $W'=W\times_{(p_{1},p_{2}),U\times V}U$. By Proposition \ref{A1}, $W'$ is a perfect scheme. It follows that
$$
h_{W'}=h_{W}\times_{h_{U}\times h_{V}}h_{U}=(h_{U}\times_{\varphi_{F},F,a}h_{V})\times_{h_{U}\times h_{V}}h_{U}=F\times_{F\times F}h_{U}.
$$
Thus, $F\rightarrow F\times F$ is quasi-perfect.

(2) Let $U,V\in{\rm{Ob}}(({\rm Sch}/S)_{fppf})$ and $\xi\in F(V),\xi'\in F(U)$ such that the functor $h_{V}\times_{F}h_{U}$ is represented by a perfect scheme. In other words, there exists $U\in{\rm{Ob}}(({\rm Sch}/S)_{fppf})$ such that every map $\varphi_{F}:h_{U}\rightarrow F$ is represented by a perfect scheme and is semiperfect.

Conversely, there exists $V\in{\rm{Ob}}(({\rm Sch}/S)_{fppf})$ such that for any $\xi\in F(V)$ the functor $h_{V}\times_{F}h_{U}$ is represented by a perfect scheme. In other words, there exist $U,V\in{\rm{Ob}}(({\rm Sch}/S)_{fppf})$ and $\xi\in F(V),\xi'\in F(U)$ such that the functor $h_{V}\times_{F}h_{U}$ is represented by a perfect scheme. Hence, $F$ is semiperfect.

The proof of the equivalent statement is similar to (1).

(3) If $F$ is strongly perfect, then for every perfect scheme $V\in{\rm{Ob}}(({\rm Sch}/S)_{fppf})$, the maps $h_{V}\rightarrow F$ are weakly perfect. Conversely, if for every perfect schemes $U,V\in{\rm{Ob}}(({\rm Sch}/S)_{fppf})$ and any $\xi\in F(V),\xi'\in F(U)$, the functor $h_{V}\times_{F}h_{U}$ is represented by a perfect scheme. Then $F$ is strongly perfect.

The proof of the equivalent statement is similar to (1).
\end{proof}

Now, we can complete our proof of \proref{P6} that the category $\textrm{SPerf}_{AP_{S}}$ (resp. $\textrm{QPerf}_{AP_{S}}$, $\textrm{StPerf}_{AP_{S}}$) of semiperfect (resp. quasi-perfect, strongly perfect) algebraic spaces over $S$ is stable under fibre products.
\begin{proof}[Proof of Proposition \ref{P6}]
(1) Since $F,G$ are semiperfect, it follows from Theorem \ref{C2} that $\Delta_{F}:F\rightarrow F\times F$ and $\Delta_{G}:G\rightarrow G\times G$ are semiperfect. Then we have the following diagram
$$
\xymatrix{
  F\times_{H}G \ar[d]_{\Delta} \ar[r]^{} & F\times G \ar[d]^{\Delta_{F}\times\Delta_{G}} \\
  (F\times F)\times_{H\times H}(G\times G) \ar[r]^{} & (F\times F)\times(G\times G)   }
$$
which is Cartesian. Note that $\Delta$ is the diagonal of $F\times_{H}G$. By Proposition \ref{P5}, $\Delta_{F}\times\Delta_{G}$ is semiperfect. Then
$$\Delta:F\times_{H}G\rightarrow(F\times F)\times_{H\times H}(G\times G)$$
is also semiperfect following Proposition \ref{P4}. Thus, $F\times_{H}G$ is semiperfect by Theorem \ref{C2}.

To prove (2) and (3), one just need to replace ``semiperfect'' by ``quasi-perfect'' or ``strongly perfect''.
\end{proof}

However, there is a kind of algebraic spaces that enjoys an equivalence to perfect morphisms. But such kind of algebraic spaces does not generalize perfect schemes. In fact, such kind of algebraic spaces does not even exist when they are representable.
\begin{definition}
Let $F$ be an algebraic space over $S$. Then $F$ is said to be pseudo-perfect if for every $U,V\in{\rm{Ob}}(({\rm Sch}/S)_{fppf})$ and any $\xi\in F(U),\xi'\in F(V)$, the functor $h_{U}\times_{\xi,F,\xi'}h_{V}$ is represented by a perfect scheme.
\end{definition}

It is obvious that every pseudo-perfect algebraic space is semiperfect, quasi-perfect, and strongly perfect. Let $\textrm{P-Perf}_{AP_{S}}$ be the category of pseudo-perfect algebraic spaces over $S$. Then there are full embeddings
\begin{align}
&\textrm{P-Perf}_{AP_{S}}\subset AP_{S}, \\
&\textrm{P-Perf}_{AP_{S}}\subset\textrm{StPerf}_{AP_{S}}\subset\textrm{SPerf}_{AP_{S}}\subset \textrm{QPerf}_{AP_{S}}.
\end{align}

\begin{proposition}
Let $F$ be an algebraic space over $S$. Then $F$ is pseudo-perfect if and only if for every $U\in{\rm{Ob}}(({\rm Sch}/S)_{fppf})$, the maps $h_{U}\rightarrow F$ are perfect.
\end{proposition}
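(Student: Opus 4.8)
The plan is to unwind both definitions and observe that they say the same thing once one applies the Yoneda lemma and uses the symmetry of fibre products. First I would record the dictionary: by Yoneda, for each $U\in\Ob(({\rm Sch}/S)_{fppf})$ the maps $\varphi_{F}\colon h_{U}\rightarrow F$ correspond bijectively to elements $\xi\in F(U)$, so the phrase ``every map $h_{U}\rightarrow F$'' is the same as ``every $\xi\in F(U)$''. Under this correspondence, the assertion ``$\varphi_{F}\colon h_{U}\rightarrow F$ is a perfect morphism'' translates, by the definition of perfect morphism applied with target $F$ and source $h_{U}$, into: for every $V\in\Ob(({\rm Sch}/S)_{fppf})$ and every $\eta\in F(V)$, the functor $h_{V}\times_{\eta,F,\xi}h_{U}$ is represented by a perfect scheme in $\Ob(({\rm Sch}/S)_{fppf})$.

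For the forward implication, I would assume $F$ is pseudo-perfect and fix a $U$ together with an arbitrary map $\varphi_{F}\colon h_{U}\rightarrow F$, corresponding to some $\xi\in F(U)$. Given any $V$ and any $\eta\in F(V)$, pseudo-perfectness applied to the pair $(\xi,\eta)$ says exactly that $h_{U}\times_{\xi,F,\eta}h_{V}\simeq h_{V}\times_{\eta,F,\xi}h_{U}$ is represented by a perfect scheme; this is precisely the condition above, so $\varphi_{F}$ is perfect.

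For the converse, I would assume that for every $U$ every map $h_{U}\rightarrow F$ is perfect, and fix arbitrary $U,V\in\Ob(({\rm Sch}/S)_{fppf})$ and $\xi\in F(U)$, $\xi'\in F(V)$. The map $\varphi_{F}\colon h_{U}\rightarrow F$ attached to $\xi$ is perfect, so applying the definition with test object $V$ and element $\xi'\in F(V)$ gives that $h_{V}\times_{\xi',F,\xi}h_{U}$ is represented by a perfect scheme; by the canonical isomorphism $h_{V}\times_{\xi',F,\xi}h_{U}\simeq h_{U}\times_{\xi,F,\xi'}h_{V}$ this says $h_{U}\times_{\xi,F,\xi'}h_{V}$ is represented by a perfect scheme, so $F$ is pseudo-perfect.

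I do not expect any genuine obstacle here; the proof is a direct translation of definitions. The only points requiring a little care are (i) invoking Yoneda so that the quantifier ``every map $\varphi_{F}\colon h_{U}\rightarrow F$'' becomes ``every $\xi\in F(U)$'', and (ii) noting that $h_{U}\times_{F}h_{V}$ and $h_{V}\times_{F}h_{U}$ are canonically isomorphic, so that the apparent asymmetry in the definition of a perfect morphism between the source $h_{U}$ and the test object $h_{V}$ evaporates once both arguments range over all of $\Ob(({\rm Sch}/S)_{fppf})$. The hypothesis that $F$ is an algebraic space is used only to ensure $F$ is a presheaf (indeed sheaf) of sets, so that both notions apply to it.
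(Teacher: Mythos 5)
Your proof is correct and follows essentially the same route as the paper's: both directions are a direct unwinding of the definitions of pseudo-perfectness and of a perfect morphism, using the Yoneda correspondence between maps $h_{U}\rightarrow F$ and elements of $F(U)$ together with the canonical symmetry $h_{U}\times_{F}h_{V}\simeq h_{V}\times_{F}h_{U}$. If anything, your write-up makes the quantifier bookkeeping more explicit than the paper's own (rather terse) argument, but there is no substantive difference in method.
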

\begin{proof}
Assume that $F$ is pseudo-perfect. Let $U,V\in{\rm{Ob}}(({\rm Sch}/S)_{fppf})$ and $\xi\in F(U),\xi'\in F(V)$ such that the functor $h_{V}\times_{F}h_{U}$ is represented by a perfect scheme. This induces a representable map $h_{U}\rightarrow F$ which is obviously perfect.

Conversely, let $h_{U}\rightarrow F$ be a perfect map. Then for every $V\in{\rm{Ob}}(({\rm Sch}/S)_{fppf})$ and $\xi'\in F(V)$, the functor $h_{V}\times_{F}h_{U}$ is represented by a perfect scheme. Thus, if any map $h_{U}\rightarrow F$ is perfect for every $U\in{\rm{Ob}}(({\rm Sch}/S)_{fppf})$, then $F$ is pseudo-perfect.
\end{proof}

Let us turn back to \S\ref{B3}. We will show that the algebraic Frobenius morphism of a perfect algebraic space is weakly perfect. We first observe that every isomorphism of algebraic spaces is weakly perfect.
\begin{lemma}\label{LL1}
Any isomorphism $f:F\rightarrow G$ of algebraic spaces over $S$ is weakly perfect. In particular, for any algebraic space $F$ over $S$, the identity morphism $1_{F}:F\rightarrow F$ is weakly perfect.
\end{lemma}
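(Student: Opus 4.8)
The plan is to unwind the definition of weakly perfect morphism and exploit the elementary fact that the base change of an isomorphism is again an isomorphism. Fix a perfect scheme $U\in\Ob(({\rm Sch}/S)_{fppf})$ and an arbitrary $\xi\in G(U)$, which by the Yoneda lemma corresponds to a morphism $\xi\colon h_{U}\rightarrow G$. I would then form the fibre product $h_{U}\times_{\xi,G,f}F$ and observe that its first projection $\mathrm{pr}_{1}\colon h_{U}\times_{\xi,G,f}F\rightarrow h_{U}$ is, by construction, the base change of $f\colon F\rightarrow G$ along $\xi$. Since $f$ is an isomorphism and isomorphisms are stable under base change in any category, $\mathrm{pr}_{1}$ is an isomorphism, so $h_{U}\times_{\xi,G,f}F\cong h_{U}$. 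As $h_{U}$ is represented by the perfect scheme $U$ by our choice, the defining condition of a weakly perfect morphism is satisfied with representing object $W=U$. Since $U$ and $\xi$ were arbitrary, $f$ is weakly perfect.

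For the final assertion I would simply specialize the above to $G=F$ and $f=1_{F}$, which is an isomorphism; equivalently, one checks directly that $h_{U}\times_{\xi,F,1_{F}}F\cong h_{U}$ for every perfect scheme $U$ and every $\xi\in F(U)$. There is no real obstacle here: the argument is a direct translation of the definition, and the only point worth stating explicitly is that the representing scheme $W$ may be taken to be $U$ itself, which is perfect precisely because the definition of ``weakly perfect'' quantifies only over perfect test schemes $U$.
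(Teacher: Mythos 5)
Your argument is correct and is essentially the paper's own proof: both reduce to the observation that $h_{U}\times_{\xi,G,f}F\simeq h_{U}$ for any perfect test scheme $U$, so the fibre product is represented by the perfect scheme $U$ itself. The only cosmetic difference is that the paper obtains this identification by invoking its Lemma \ref{A5} (replacing the base $G$ by $F$ along the monomorphism $f$, giving $h_{U}\times_{G}F=h_{U}\times_{F}F\simeq h_{U}$), whereas you note directly that the projection is the base change of the isomorphism $f$ and hence an isomorphism.
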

\begin{proof}
Let $U\in{\rm{Ob}}(({\rm Sch}/S)_{fppf})$. Then by Lemma \ref{A5}, we have $h_{U}\times_{G}F=h_{U}\times_{F}F\simeq h_{U}$. Since we may choose $U$ to be a perfect scheme, $f$ is weakly perfect.
\end{proof}

This gives rise to the following lemma.
\begin{lemma}\label{A28}
Let $U\in{\rm{Ob}}(({\rm Sch}/S)_{fppf})$ be a scheme of characteristic $p$ and let $\Phi_{U}$ be its absolute Frobenius morphism. If $U$ is perfect, then $h(\Phi_{U})$ is weakly perfect.
\end{lemma}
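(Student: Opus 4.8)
The plan is to reduce the statement immediately to Lemma \ref{LL1}. By definition, a perfect scheme of characteristic $p$ is one whose absolute Frobenius is an isomorphism, so the hypothesis that $U$ is perfect says precisely that $\Phi_{U}\colon U\to U$ is an isomorphism of schemes. First I would invoke full faithfulness of the Yoneda embedding $h$ (equivalently, that $h$ is an equivalence onto its image), which carries isomorphisms to isomorphisms; hence $h(\Phi_{U})\colon h_{U}\to h_{U}$ is an isomorphism of sheaves on $({\rm Sch}/S)_{fppf}$. Since a representable functor is an algebraic space by \cite[Tag025Z]{StackProject}, $h_{U}$ is an algebraic space over $S$, so $h(\Phi_{U})$ is in fact an isomorphism of algebraic spaces over $S$. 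Then Lemma \ref{LL1}, which asserts that every isomorphism of algebraic spaces over $S$ is weakly perfect, applies verbatim and closes the argument.

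As a sanity check I would also verify the conclusion directly from the definition of a weakly perfect morphism: given any perfect scheme $V\in{\rm{Ob}}(({\rm Sch}/S)_{fppf})$ and any $\xi\in h_{U}(V)$, the fibre product $h_{V}\times_{\xi,\,h_{U},\,h(\Phi_{U})}h_{U}$ equals $h_{V\times_{\xi,\,U,\,\Phi_{U}}U}$ because $h$ preserves fibre products and $({\rm Sch}/S)_{fppf}$ has them. Pulling back the isomorphism $\Phi_{U}$ along $\xi$ shows that the projection $V\times_{\xi,\,U,\,\Phi_{U}}U\to V$ is an isomorphism, so this fibre product is represented by the scheme $V$, which is perfect by hypothesis. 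Ranging over all perfect $V$ gives weak perfectness of $h(\Phi_{U})$ on the nose, with the representing perfect scheme being $V$ itself.

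I do not expect any real obstacle: the entire content is absorbed by the definition of a perfect scheme together with Lemma \ref{LL1} (or, in the direct route, Lemma \ref{A5}). The only points needing (routine) care are that $h$ transports the isomorphism $\Phi_{U}$ to an isomorphism, which is immediate from full faithfulness of Yoneda, and that Lemma \ref{LL1} is genuinely applicable here, i.e.\ that $h_{U}$ is an algebraic space so that $h(\Phi_{U})$ counts as a morphism of algebraic spaces and not merely of sheaves.
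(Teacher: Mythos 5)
Your main argument is exactly the paper's proof: since $U$ is perfect, $\Phi_{U}$ is an isomorphism, hence $h(\Phi_{U})$ is an isomorphism (Yoneda), and Lemma \ref{LL1} then gives weak perfectness. The additional direct verification via fibre products is a correct but unnecessary supplement; the proposal is sound and matches the paper's route.
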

\begin{proof}
If $U$ is perfect, then $h(\Phi_{U})$ is an isomorphism. Thus, it follows from Lemma \ref{LL1} that $h(\Phi_{U})$ is weakly perfect.
\end{proof}

The following proposition shows that the algebraic Frobenius of a perfect algebraic space is weakly perfect.
\begin{proposition}\label{P3}
Let $F$ be an algebraic space in characteristic $p$ over $S$ and $\Psi_{F}:F\rightarrow F$ be the algebraic Frobenius morphism of $F$. If $F$ is perfect, then $\Psi_{F}$ is weakly perfect.
\end{proposition}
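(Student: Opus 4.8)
The plan is to reduce the statement to two facts already in hand: that the algebraic Frobenius morphism of a perfect algebraic space is an \emph{isomorphism}, and that every isomorphism of algebraic spaces over $S$ is weakly perfect (Lemma~\ref{LL1}). Given these, the proposition follows at once.

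Concretely, I would proceed as follows. Since $F$ is perfect (and has characteristic $p$), Definition~\ref{D1}(1) provides a surjective \'{e}tale map $f\colon h_{U}\to F$ with $U\in\Ob(({\rm Sch}/S)_{fppf})$ a perfect scheme; by the convention fixed after the definition of the algebraic Frobenius, the morphism $\Psi_{F}$ of the perfect algebraic space $F$ is the one attached to such an atlas, so it sits in the commutative square of Proposition~\ref{A11} with $h(\Phi_{U})$ on the left-hand side. Because $U$ is perfect, its absolute Frobenius $\Phi_{U}\colon U\to U$ is an isomorphism, and therefore Theorem~\ref{C5} forces $\Psi_{F}\colon F\to F$ to be an isomorphism. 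Now Lemma~\ref{LL1} says precisely that an isomorphism of algebraic spaces over $S$ is weakly perfect, so $\Psi_{F}$ is weakly perfect. In the same spirit as Lemma~\ref{A28}, one can also bypass Lemma~\ref{LL1} and verify the definition directly: for every perfect scheme $U\in\Ob(({\rm Sch}/S)_{fppf})$ and every $\xi\in F(U)$, the projection $h_{U}\times_{\xi,F,\Psi_{F}}F\to h_{U}$ is the base change of the isomorphism $\Psi_{F}$ along $\xi$, hence an isomorphism, so $h_{U}\times_{F}F\simeq h_{U}$ is represented by the perfect scheme $U$, which is exactly the condition for $\Psi_{F}$ to be weakly perfect.

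There is essentially no obstacle once the earlier results are invoked; the argument is formal. The only subtlety is a bookkeeping one: for a perfect $F$, ``the algebraic Frobenius morphism $\Psi_{F}$ of $F$'' is understood, as stipulated after the definition, to be one associated with an \'{e}tale atlas by a perfect scheme --- this is precisely the input that makes Theorem~\ref{C5} deliver the required isomorphism. It is also worth noting that the hypothesis does not require $F$ to be a scheme, so the result genuinely extends beyond perfect schemes.
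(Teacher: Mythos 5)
Your proposal is correct and follows essentially the same route as the paper: invoke Theorem~\ref{C5} (via the atlas by a perfect scheme) to conclude that $\Psi_{F}$ is an isomorphism, then apply Lemma~\ref{LL1} to deduce weak perfectness. Your extra remark about which atlas $\Psi_{F}$ is taken with respect to is a sound clarification of a point the paper leaves implicit, but it does not change the argument.
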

\begin{proof}
Let $\varphi_{F}:h_{U}\rightarrow F$ be a surjective \'{e}tale map. Here is the commutative diagram.
$$
\xymatrix{
  h_{U} \ar[d]_{h(\Phi_{U})} \ar[r]^{\varphi_{F}} & F \ar[d]^{\Psi_{F}} \\
  h_{U} \ar[r]^{\varphi_{F}} & F   }
$$
Since $F$ is perfect, $\Psi_{F}$ is an isomorphism by Theorem \ref{C5}. Thus, it follows from Lemma \ref{LL1} that $\Psi_{F}$ is weakly perfect.
\end{proof}

Let $k$ be a perfect field of characteristic $p$. If an algebraic space is \'{e}tale over $k$, then it is weakly perfect over $k$.
\begin{proposition}
Let $X$ be an algebraic space over $S$ with a morphism $\varphi:X\rightarrow{\rm{Spec}}(k)$. If $\varphi$ is \'{e}tale, then $\varphi$ is weakly perfect.
\end{proposition}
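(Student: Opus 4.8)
The plan is to reduce the claim to \proref{A1}. First I would unwind the definition: to show that $\varphi$ is weakly perfect I must verify that for every perfect scheme $U\in\Ob(({\rm Sch}/S)_{fppf})$ and every $\xi\in h_{{\rm Spec}(k)}(U)$, i.e.\ every morphism $\xi\colon U\to{\rm Spec}(k)$, the functor $h_{U}\times_{\xi,h_{{\rm Spec}(k)}}X$ is represented by a perfect scheme. So fix such a $U$ and $\xi$.

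The one substantial point is that the algebraic space $X$ is in fact a scheme. Since $\varphi$ is \'{e}tale, $X$ is an algebraic space \'{e}tale over the spectrum of a field. Choosing a surjective \'{e}tale atlas $h_{V}\to X$ with $V$ a scheme, one sees that $V$ and $V\times_{X}V$ are both \'{e}tale over $k$, hence are disjoint unions of spectra of finite separable extensions of $k$; the quotient of $V$ by this \'{e}tale equivalence relation is then again a scheme of the same kind (this is the structure theory of algebraic spaces over a field; see \cite{StackProject}). Thus $X$ is a scheme, and since it carries the \'{e}tale morphism $\varphi\colon X\to{\rm Spec}(k)$ to the perfect field $k$, \proref{A13} applies and shows that $X$ is a perfect scheme of characteristic $p$.

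It then remains to invoke \proref{A1}. As $X$ is now representable by a perfect scheme, we have
$$h_{U}\times_{\xi,h_{{\rm Spec}(k)}}X=h_{U}\times_{h_{{\rm Spec}(k)}}h_{X}=h_{U\times_{{\rm Spec}(k)}X}.$$
By \proref{A1}, applied to the perfect schemes $U$ and $X$ of characteristic $p$ together with the structure morphisms $\xi\colon U\to{\rm Spec}(k)$ and $\varphi\colon X\to{\rm Spec}(k)$, the fibre product $U\times_{{\rm Spec}(k)}X$ is a perfect scheme of characteristic $p$. Hence $h_{U}\times_{\xi,h_{{\rm Spec}(k)}}X$ is represented by a perfect scheme; since $U$ and $\xi$ were arbitrary, $\varphi$ is weakly perfect.

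I expect the first step to be the main obstacle: the fact that an algebraic space \'{e}tale over a field is a scheme is not established in this paper and must be imported (or re-derived via the quotient argument sketched above), whereas everything afterwards is formal. As a sanity check that this step is genuinely needed, taking $U={\rm Spec}(k)$, which is a perfect scheme, and $\xi={\rm id}$ already forces $X\cong h_{{\rm Spec}(k)}\times_{h_{{\rm Spec}(k)}}X$ to be represented by a perfect scheme, so representability of $X$ is part of what has to be shown and cannot be bypassed. No nontrivial computation is required beyond invoking \proref{A13} and \proref{A1}.
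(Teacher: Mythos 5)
Your proof is correct, but it takes a genuinely different route from the paper's. The paper argues directly on test objects: it takes a scheme $V$ together with an \'{e}tale morphism $V\to{\rm{Spec}}(k)$, writes $W\simeq V\times_{{\rm{Spec}}(k)}X$, observes that $W\to V$ is \'{e}tale as a base change of $\varphi$, hence $W\to{\rm{Spec}}(k)$ is \'{e}tale by composition, and concludes that both $V$ and $W$ are perfect by \proref{A13}. In particular the paper never shows that $X$ itself is a scheme, it tacitly assumes that $V\times_{{\rm{Spec}}(k)}X$ is representable by a scheme $W$, and it only treats test maps $V\to{\rm{Spec}}(k)$ that are \'{e}tale, whereas the definition of a weakly perfect morphism requires an arbitrary $\xi\in h_{{\rm{Spec}}(k)}(U)$ for every perfect scheme $U$. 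Your argument addresses exactly these points: you import the structure theorem that an algebraic space \'{e}tale over a field is a scheme (a disjoint union of spectra of finite separable extensions, \cite{StackProject}), deduce from \proref{A13} that $X$ is a perfect scheme of characteristic $p$, and then dispose of an arbitrary perfect $U$ and arbitrary $\xi$ via \proref{A1}. What your route buys is completeness: as your sanity check with $U={\rm{Spec}}(k)$ and $\xi={\rm{id}}$ shows, representability of $X$ is forced by the conclusion and cannot be bypassed, and invoking \proref{A1} covers non-\'{e}tale $\xi$, which the paper's composition-of-\'{e}tale-maps argument does not. What it costs is the external input (scheme-ness of algebraic spaces \'{e}tale over a field), which is not proved in this paper and which the paper's shorter argument avoids only by leaving the corresponding gaps implicit.
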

\begin{proof}
Let $V,W\in {\rm{Ob}}(({\rm Sch}/S)_{fppf})$ such that there is an \'{e}tale morphism $V\rightarrow\textrm{Spec}(k)$. Then the morphism $W\simeq V\times_{\textrm{Spec}(k)}X\rightarrow V$ is \'{e}tale by assumption. By the composition $W \rightarrow V\rightarrow\textrm{Spec}(k)$ of \'{e}tale morphisms, we obtain an \'{e}tale morphism $W\rightarrow\textrm{Spec}(k)$. Thus, $W,V$ is perfect by Proposition \ref{A13} so that $\varphi$ is weakly perfect.
\end{proof}

The following proposition shows that certain presheaves semiperfect over an algebraic space would be perfect.
\begin{proposition}
Let $F$ be an algebraic space over $S$ and let $f:G\rightarrow F$ be a representable semiperfect natural transformation of functors. Suppose that there exists a surjective \'{e}tale map $h_{U}\rightarrow F$ such that $h_{U}\times_{F}G$ is represented by a perfect scheme. Then $G$ is a perfect algebraic space.
\end{proposition}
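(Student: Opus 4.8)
The plan is to produce a surjective \'etale cover of $G$ by a perfect scheme and then appeal to the characterization (established at the end of \S\ref{B2}, resting ultimately on \cite[Tag0BGR]{StackProject} and \cite[Tag0BGQ]{StackProject}) that a sheaf admitting such a cover is a perfect algebraic space.

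First I would observe that $G$ is a sheaf on $({\rm Sch}/S)_{fppf}$. Since $f\colon G\rightarrow F$ is representable and $F$, being an algebraic space, is already a sheaf, any matching family of sections of $G$ over a covering maps to a matching family over $F$, which glues uniquely; that section $h_T\rightarrow F$ then lifts uniquely through the scheme representing the fibre product $h_T\times_F G$, producing the desired unique gluing in $G$. Hence $G$ satisfies the sheaf axioms (equivalently, $G$ is already an algebraic space by the usual bootstrapping).

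Next I would exhibit the cover. By hypothesis there is a surjective \'etale map $h_U\rightarrow F$ and an isomorphism $h_U\times_F G\simeq h_W$ with $W\in\ObSchS$ a perfect scheme. The projection $p\colon h_W\simeq h_U\times_F G\rightarrow G$ is exactly the base change of $h_U\rightarrow F$ along $f$. Representability, surjectivity, and \'etaleness of a morphism of functors are all stable under arbitrary base change, so $p$ is a representable, surjective, \'etale morphism; concretely, for any scheme $T$ and any $T\rightarrow G$ one has $h_T\times_G h_W\simeq h_T\times_F h_U$ via the composite $h_T\rightarrow G\rightarrow F$, which is representable by a scheme over $h_T$ whose structure morphism is surjective \'etale.

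Finally, $G$ is a sheaf equipped with a representable surjective \'etale map $h_W\rightarrow G$ from the perfect scheme $W$, so by \defref{D1}(1) together with the cited bootstrapping results, $G$ is a perfect algebraic space. The argument is essentially formal once the two inputs — $G$ is a sheaf, and ``surjective \'etale'' is preserved under base change — are in place; the only point requiring a moment's care is checking that $p\colon h_W\rightarrow G$ is genuinely representable and not merely a morphism of sheaves, which is where the representability of $h_U\rightarrow F$ is used. The semiperfectness hypothesis on $f$ plays no further role beyond what is already subsumed in the explicit assumption that $h_U\times_F G$ is represented by a perfect scheme; it is recorded here mainly to situate the statement among the consequences of semiperfect morphisms studied in \S\ref{B4}.
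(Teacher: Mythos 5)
Your proposal is correct and takes essentially the same route as the paper: the paper simply cites \cite[Tag02WY]{StackProject} to conclude that the representable transformation $f$ makes $G$ an algebraic space, and then notes that the base change $h_{U}\times_{F}G\rightarrow G$ of the surjective \'{e}tale map $h_{U}\rightarrow F$ is surjective \'{e}tale and represented by the perfect scheme $W$, so $G$ is perfect. Your additional verification of the sheaf property and the appeal to the \cite[Tag0BGQ]{StackProject}-style bootstrap (the paper's final proposition of \S\ref{B2}) is just a more explicit packaging of the same argument, and your observation that the semiperfectness hypothesis is not actually needed matches the paper's proof, which likewise never uses it.
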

\begin{proof}
It follows from \cite[Tag02WY]{StackProject} that $G$ is an algebraic space. By assumption, the base change $h_{U}\times_{F}G\rightarrow G$ is surjective \'{e}tale and there is a perfect scheme $W\in {\rm{Ob}}(({\rm Sch}/S)_{fppf})$ such that $h_{W}\simeq h_{U}\times_{F}G\rightarrow G$. Thus, $G$ is a perfect algebraic space.
\end{proof}

\section{Some categories of perfect algebraic spaces}\label{B5}
In this section, we construct several subcategories or subsemicategories of $AP_{S}$ spanned by perfect (resp. quasi-perfect, semiperfect, strongly perfect) algebraic spaces and perfect (resp. quasi-perfect, semiperfect, weakly perfect) morphisms, rather than the full subcategories of $AP_{S}$ in \S\ref{B2}.

\begin{definition1}\label{A31}
Here is a list of subcategories of the category $AP_{S}$ of algebraic spaces over $S$.
\begin{enumerate}
\item
The category $\mathcal{Q}\textrm{Perf}_{S}$ whose objects are quasi-perfect algebraic spaces over $S$ and morphisms are quasi-perfect morphisms in $AP_{S}$.
\item
The category $\mathcal{Q}\textrm{Perf}_{S}^{*}$ whose objects are quasi-perfect algebraic spaces over $S$ and morphisms are semiperfect morphisms in $AP_{S}$.
\item
The category $\mathcal{Q}\textrm{Perf}_{S}^{\sharp}$ whose objects are quasi-perfect algebraic spaces over $S$ and morphisms are weakly perfect morphisms in $AP_{S}$.
\item
The category $\mathcal{S}\textrm{Perf}_{S}$ whose objects are semiperfect algebraic spaces over $S$ and morphisms are semiperfect morphisms in $AP_{S}$.
\item
The category $\mathcal{S}\textrm{Perf}_{S}^{\sharp}$ whose objects are semiperfect algebraic spaces over $S$ and morphisms are quasi-perfect morphisms in $AP_{S}$.
\item
The category $\mathcal{S}\textrm{Perf}_{S}^{\sharp\sharp}$ whose objects are semiperfect algebraic spaces over $S$ and morphisms are weakly perfect morphisms in $AP_{S}$.
\item
The category $\mathcal{ST}\textrm{Perf}_{S}$ whose objects are strongly perfect algebraic spaces over $S$ and morphisms are weakly perfect morphisms in $AP_{S}$.
\item
The category $\mathcal{ST}\textrm{Perf}_{S}^{*}$ whose objects are strongly perfect algebraic spaces over $S$ and morphisms are quasi-perfect morphisms in $AP_{S}$.
\item
The category $\mathcal{ST}\textrm{Perf}_{S}^{**}$ whose objects are strongly perfect algebraic spaces over $S$ and morphisms are semiperfect morphisms in $AP_{S}$.
\item
The category $\textrm{Perf}^{\mathcal{Q}}_{S}$ whose objects are perfect algebraic spaces over $S$ and morphisms are quasi-perfect morphisms in $AP_{S}$.
\item
The category $\textrm{Perf}^{\mathcal{S}}_{S}$ whose objects are perfect algebraic spaces over $S$ and morphisms are semiperfect morphisms in $AP_{S}$.
\item
The category $\textrm{Perf}^{\mathcal{ST}}_{S}$ whose objects are perfect algebraic spaces over $S$ and morphisms are weakly perfect morphisms in $AP_{S}$.
\end{enumerate}
\end{definition1}

Then there are inclusion functors
\begin{align}
\mathcal{Q}\textrm{Perf}_{S}^{*}&\longrightarrow\textrm{QPerf}_{AP_{S}} \\
\mathcal{S}\textrm{Perf}_{S}&\longrightarrow\textrm{SPerf}_{AP_{S}} \\
\mathcal{ST}\textrm{Perf}_{S}^{**}&\longrightarrow\textrm{StPerf}_{AP_{S}} \\
\textrm{Perf}^{\mathcal{Q}}_{S}&\longrightarrow\textrm{Perf}_{AP_{S}}\label{I3} \\
\textrm{Perf}^{\mathcal{S}}_{S}&\longrightarrow\textrm{Perf}_{AP_{S}} \label{I4}\\
\textrm{Perf}^{\mathcal{ST}}_{S}&\longrightarrow\textrm{Perf}_{AP_{S}}\label{I5}
\end{align}
together with strings of inclusion functors following \S\ref{B2}.
\begin{align}
&\mathcal{ST}\textrm{Perf}_{S}\longrightarrow \mathcal{Q}\textrm{Perf}_{S}\longrightarrow\mathcal{S}\textrm{Perf}_{S} \\
&\mathcal{Q}\textrm{Perf}_{S}^{\sharp}\longrightarrow\mathcal{Q}\textrm{Perf}_{S}\longrightarrow\mathcal{Q}\textrm{Perf}_{S}^{*} \\
&\mathcal{S}\textrm{Perf}_{S}^{\sharp\sharp}\longrightarrow\mathcal{S}\textrm{Perf}_{S}^{\sharp}\longrightarrow\mathcal{S}\textrm{Perf}_{S} \\
&\mathcal{ST}\textrm{Perf}_{S}\longrightarrow\mathcal{ST}\textrm{Perf}_{S}^{*}\longrightarrow\mathcal{ST}\textrm{Perf}_{S}^{**} \\
&\textrm{Perf}^{\mathcal{ST}}_{S}\longrightarrow\textrm{Perf}^{\mathcal{S}}_{S}\longrightarrow\textrm{Perf}^{\mathcal{Q}}_{S}
\end{align}

Furthermore, we have the following commutative diagram of inclusion functors
$$
\xymatrix{
  \textrm{StPerf}_{AP_{S}}  \ar[r]^{} & \textrm{QPerf}_{AP_{S}}  \ar[r]^{} & \textrm{SPerf}_{AP_{S}}  \\
  \mathcal{ST}\textrm{Perf}_{S}^{**} \ar[u]_{}  \ar[r]^{} & \mathcal{Q}\textrm{Perf}_{S}^{*} \ar[u]_{}  \ar[r]^{} & \mathcal{S}\textrm{Perf}_{S} \ar[u]_{}  \\
  \mathcal{ST}\textrm{Perf}_{S}^{*} \ar[u]_{} \ar[r]^{} & \mathcal{Q}\textrm{Perf}_{S} \ar[u]_{}  \ar[r]^{} & \mathcal{S}\textrm{Perf}_{S}^{\sharp} \ar[u]_{}  \\
  \mathcal{ST}\textrm{Perf}_{S} \ar[u]_{} \ar[r]^{} & \mathcal{Q}\textrm{Perf}_{S}^{\sharp} \ar[u]_{} \ar[r]^{} & \mathcal{S}\textrm{Perf}_{S}^{\sharp\sharp} \ar[u]_{}   }
$$

Next, we can consider subcategories spanned by representable algebraic spaces.
\begin{definition1}\label{A34}
Here is a list of subcategories of the categories in Definition \ref{A31}:
\begin{enumerate}
\item
The category $\widehat{\mathcal{Q}\textrm{Perf}_{S}}$ of representable quasi-perfect algebraic spaces and quasi-perfect morphisms.
\item
The category $\widehat{\mathcal{Q}\textrm{Perf}_{S}^{*}}$ of representable quasi-perfect algebraic spaces and semiperfect morphisms.
\item
The category $\widehat{\mathcal{Q}\textrm{Perf}_{S}^{\sharp}}$ of representable quasi-perfect algebraic spaces and weakly perfect morphisms.
\item
The category $\widehat{\mathcal{S}\textrm{Perf}_{S}}$ of representable semiperfect algebraic spaces and semiperfect morphisms.
\item
The category $\widehat{\mathcal{S}\textrm{Perf}_{S}^{\sharp}}$ of representable semiperfect algebraic spaces and quasi-perfect morphisms.
\item
The category $\widehat{\mathcal{S}\textrm{Perf}_{S}^{\sharp\sharp}}$ of representable semiperfect algebraic spaces and weakly perfect morphisms.
\item
The category $\widehat{\mathcal{ST}\textrm{Perf}_{S}}$ whose objects are representable strongly perfect algebraic spaces over $S$ and morphisms are weakly perfect morphisms in $AP_{S}$.
\item
The category $\widehat{\mathcal{ST}\textrm{Perf}_{S}^{*}}$ whose objects are representable strongly perfect algebraic spaces over $S$ and morphisms are quasi-perfect morphisms in $AP_{S}$.
\item
The category $\widehat{\mathcal{ST}\textrm{Perf}_{S}^{**}}$ whose objects are representable strongly perfect algebraic spaces over $S$ and morphisms are semiperfect morphisms in $AP_{S}$.
\item
The category $\widehat{\textrm{Perf}^{\mathcal{Q}}_{S}}$ whose objects are representable perfect algebraic spaces over $S$ and morphisms are quasi-perfect morphisms in $AP_{S}$.
\item
The category $\widehat{\textrm{Perf}^{\mathcal{S}}_{S}}$ whose objects are representable perfect algebraic spaces over $S$ and morphisms are semiperfect morphisms in $AP_{S}$.
\item
The category $\widehat{\textrm{Perf}^{\mathcal{ST}}_{S}}$ whose objects are representable perfect algebraic spaces over $S$ and morphisms are weakly perfect morphisms in $AP_{S}$.
\end{enumerate}
\end{definition1}

Although we can not form a category whose morphisms are perfect morphisms of algebraic spaces, there is another suitable structure called the semicategory. It is a category without identity morphisms.
\begin{definition}[\cite{Garraway}, \cite{Hayashi}]
A semicategory $\mathscr{C}$ consists of the following data
\begin{itemize}
\item[(1)]
A set $\left|\mathscr{C}\right|$ of objects.
\item[(2)]
For each pair $x,y\in\left|\mathscr{C}\right|$, a set of morphisms $\mathscr{C}(x,y)$ from $x$ to $y$.
\item[(3)]
For each triple $x,y,z\in\left|\mathscr{C}\right|$, a function
$$
\mathscr{C}(x,y)\times \mathscr{C}(y,z)\longrightarrow\mathscr{C}(x,z), \ \ (f,g)\longmapsto fg.
$$
which is called composition and satisfies the associativity axiom.
\end{itemize}

Let $\mathscr{C}$ and $\mathscr{D}$ be two semicategories. A semifunctor $F:\mathscr{C}\rightarrow\mathscr{D}$ from $\mathscr{C}$ to $\mathscr{D}$ consists of the following data
\begin{itemize}
\item[(1)]
A function $\left|\mathscr{C}\right|\rightarrow\left|\mathscr{D}\right|$.
\item[(2)]
For any $x,y\in\left|\mathscr{C}\right|$, a function $\mathscr{C}(x,y)\rightarrow\mathscr{D}(Fx,Fy)$.
\end{itemize}
These data are compatible with compositions in the following manner: $F(fg)=F(f)F(g)$ when $f,g$ are composable.
\end{definition}

\begin{definition1}\label{A33}
Hence, we can form a list of semicategories as follows:
\begin{enumerate}
\item
The semicategory $\textrm{\underline{Perf}}_{S}$ whose objects are perfect algebraic spaces over $S$ and morphisms are perfect morphisms in $AP_{S}$.
\item
The semicategory $\textrm{\underline{QPerf}}_{S}$ whose objects are quasi-perfect algebraic spaces over $S$ and morphisms are perfect morphisms in $AP_{S}$.
\item
The semicategory $\textrm{\underline{SPerf}}_{S}$ whose objects are semiperfect algebraic spaces over $S$ and morphisms are perfect morphisms in $AP_{S}$.
\item
The semicategory $\textrm{\underline{StPerf}}_{S}$ whose objects are strongly perfect algebraic spaces over $S$ and morphisms are perfect morphisms in $AP_{S}$.
\item
The semicategory $\widehat{\textrm{\underline{Perf}}_{S}}$ whose objects are representable perfect algebraic spaces over $S$ and morphisms are perfect morphisms in $AP_{S}$.
\item
The semicategory $\widehat{\textrm{\underline{QPerf}}_{S}}$ whose objects are representable quasi-perfect algebraic spaces over $S$ and morphisms are perfect morphisms in $AP_{S}$.
\item
The semicategory $\widehat{\textrm{\underline{SPerf}}_{S}}$ whose objects are representable semiperfect algebraic spaces over $S$ and morphisms are perfect morphisms in $AP_{S}$.
\item
The semicategory $\widehat{\textrm{\underline{StPerf}}_{S}}$ whose objects are representable strongly perfect algebraic spaces over $S$ and morphisms are perfect morphisms in $AP_{S}$.
\end{enumerate}
\end{definition1}

Then there are strings of inclusion semifunctors
\begin{align}
&\widehat{\textrm{\underline{Perf}}_{S}}\longrightarrow\textrm{\underline{Perf}}_{S}\longrightarrow\textrm{Perf}_{AP_{S}} \label{I6} \\
&\widehat{\textrm{\underline{QPerf}}_{S}}\longrightarrow\textrm{\underline{QPerf}}_{S}\longrightarrow\textrm{QPerf}_{AP_{S}} \\
&\widehat{\textrm{\underline{SPerf}}_{S}}\longrightarrow\textrm{\underline{SPerf}}_{S}\longrightarrow\textrm{SPerf}_{AP_{S}} \\
&\widehat{\textrm{\underline{StPerf}}_{S}}\longrightarrow\textrm{\underline{StPerf}}_{S}\longrightarrow\textrm{StPerf}_{AP_{S}} \\
&\textrm{\underline{StPerf}}_{S}\longrightarrow\textrm{\underline{QPerf}}_{S}\longrightarrow\textrm{\underline{SPerf}}_{S} \\
&\widehat{\textrm{\underline{Perf}}_{S}}\longrightarrow\widehat{\textrm{\underline{StPerf}}_{S}}\longrightarrow\widehat{\textrm{\underline{QPerf}}_{S}}\longrightarrow\widehat{\textrm{\underline{SPerf}}_{S}}
\end{align}

At the end of this section, we would like to use three big commutative diagrams to illustrate the relationships of all categories and semicategories defined above. Figure \ref{F1} and Figure \ref{F2} illustrate the inclusion functors of categories in Definition \ref{A31} and \ref{A34}, while Figure \ref{F3} illustrates the inclusion semifunctors of semicategories in Definition \ref{A33}.

\begin{center}
$$
\begin{tikzcd}[row sep=1.5em, column sep = 1.5em]
\textrm{StPerf}_{AP_{S}} \arrow[rr]  &&
\textrm{QPerf}_{AP_{S}}  \arrow[rr] &&
\textrm{SPerf}_{AP_{S}}   \\
& \widehat{\textrm{StPerf}_{AP_{S}}} \arrow[ul] \arrow[rr] && \widehat{\textrm{QPerf}_{AP_{S}}} \arrow[ul] \arrow[rr] && \widehat{\textrm{SPerf}_{AP_{S}}} \arrow[ul]\\
\mathcal{ST}\textrm{Perf}_{S}^{**} \arrow[rr,]\arrow[uu]   && \mathcal{Q}\textrm{Perf}_{S}^{*} \arrow[rr] \arrow[uu]  && \mathcal{S}\textrm{Perf}_{S} \arrow[uu] \\
& \widehat{\mathcal{ST}\textrm{Perf}_{S}^{**}} \arrow[rr] \arrow[uu]   \arrow[ul]&&  \widehat{\mathcal{Q}\textrm{Perf}_{S}^{*}} \arrow[uu]  \arrow[ul] \arrow[rr] && \widehat{\mathcal{S}\textrm{Perf}_{S}}  \arrow[ul] \arrow[uu] \\
\mathcal{ST}\textrm{Perf}_{S}^{*} \arrow[rr]  \arrow[uu] && \mathcal{Q}\textrm{Perf}_{S} \arrow[rr] \arrow[uu]  && \mathcal{S}\textrm{Perf}_{S}^{\sharp}  \arrow[uu]  \\
& \widehat{\mathcal{ST}\textrm{Perf}_{S}^{*}}  \arrow[rr] \arrow[ul] \arrow[uu]&& \widehat{\mathcal{Q}\textrm{Perf}_{S}} \arrow[rr] \arrow[uu] \arrow[ul] && \widehat{\mathcal{S}\textrm{Perf}_{S}^{\sharp}} \arrow[ul] \arrow[uu] \\
\mathcal{ST}\textrm{Perf}_{S} \arrow[rr,]  \arrow[uu] && \mathcal{Q}\textrm{Perf}_{S}^{\sharp} \arrow[rr] \arrow[uu]  && \mathcal{S}\textrm{Perf}_{S}^{\sharp\sharp}  \arrow[uu]  \\
& \widehat{\mathcal{ST}\textrm{Perf}_{S}}  \arrow[rr] \arrow[ul] \arrow[uu]&& \widehat{\mathcal{Q}\textrm{Perf}_{S}\sharp} \arrow[rr] \arrow[uu] \arrow[ul] && \widehat{\mathcal{S}\textrm{Perf}_{S}^{\sharp\sharp}} \arrow[ul] \arrow[uu] \\
\end{tikzcd}
$$
\captionof{figure}{The commutative diagram of inclusion functors}
\label{F1}
\end{center}

\begin{center}
$$
\begin{tikzcd}[row sep=1.5em, column sep = 1.5em]
\widehat{\textrm{Perf}_{AP_{S}}}  \arrow[r] & \widehat{\textrm{StPerf}_{AP_{S}}}  \arrow[r] & \widehat{\textrm{QPerf}_{AP_{S}}} \arrow[r] & \widehat{\textrm{SPerf}_{AP_{S}}} \\
\widehat{\rm{Perf}^{\mathcal{S}}_{S}} \arrow[u] \arrow[r] & \widehat{\mathcal{ST}\textrm{Perf}_{S}^{**}}\arrow[u]  \arrow[r] & \widehat{\mathcal{Q}\textrm{Perf}_{S}^{*}} \arrow[u] \arrow[r] & \widehat{\mathcal{S}\textrm{Perf}_{S}} \arrow[u] \\
\widehat{\rm{Perf}^{\mathcal{Q}}_{S}} \arrow[u] \arrow[r] & \widehat{\mathcal{ST}\textrm{Perf}_{S}^{*}}\arrow[u]  \arrow[r] & \widehat{\mathcal{Q}\textrm{Perf}_{S}} \arrow[u] \arrow[r] & \widehat{\mathcal{S}\textrm{Perf}_{S}^{\sharp}} \arrow[u] \\
\widehat{\rm{Perf}^{\mathcal{Q}}_{ST}} \arrow[u] \arrow[r] & \widehat{\mathcal{ST}\textrm{Perf}_{S}}\arrow[u]  \arrow[r] & \widehat{\mathcal{Q}\textrm{Perf}_{S}^{\sharp}} \arrow[u] \arrow[r] & \widehat{\mathcal{S}\textrm{Perf}_{S}^{\sharp\sharp}} \arrow[u]
\end{tikzcd}
$$
\captionof{figure}{The commutative diagram of inclusion functors}
\label{F2}
\end{center}

\begin{center}
$$
\begin{tikzcd}[row sep=1.5em, column sep = 1.5em]
\textrm{StPerf}_{AP_{S}}  \arrow[r] & \textrm{QPerf}_{AP_{S}} \arrow[r] & \textrm{SPerf}_{AP_{S}} \\
\textrm{\underline{StPerf}}_{S}  \arrow[r]\arrow[u] & \textrm{\underline{QPerf}}_{S} \arrow[r]\arrow[u] & \textrm{\underline{SPerf}}_{S}\arrow[u] \\
\widehat{\textrm{\underline{StPerf}}_{S}}  \arrow[r]\arrow[u] & \widehat{\textrm{\underline{QPerf}}_{S}} \arrow[r]\arrow[u] & \widehat{\textrm{\underline{SPerf}}_{S}}\arrow[u]
\end{tikzcd}
$$
\captionof{figure}{The commutative diagram of inclusion semifunctors}
\label{F3}
\end{center}

\section{Perfect groupoids in algebraic spaces}\label{B6}
In this section, we briefly study the notions of perfect group algebraic spaces and perfect groupoids in algebraic spaces. First, we slightly loosen the notion of a relation $R\subset A\times A$ on a set $A$ and require it as a map $R\rightarrow A\times A$ in the setting of schemes or algebraic spaces (see \cite[Tag022P]{StackProject} or \cite{Mori}). Let $B$ be a base algebraic space over $S$. Here we specialize the definitions to the case that is of particular interest to us.
\begin{definition}
Let $U$ be an algebraic space over $B$. A pre-relation $j:R\rightarrow U\times_{B}U$ is said to be perfect if $U,R$ are perfect. Then a relation (resp. pre-equivalence relation, equivalence relation) $j:R\rightarrow U\times_{B}U$ is said to be perfect if it is perfect as a pre-relation.
\end{definition}

Next, we make the following definitions of perfect group algebraic spaces and perfect groupoids in algebraic spaces.
\begin{definition}
Let $(G,m)$ be a group algebraic space over $B$ and $(U,R,s,t,c)$ be a groupoid in algebraic spaces over $B$. We say that $(G,m)$ is perfect (resp. semiperfect) if $G$ is perfect (resp. semiperfect). And $(U,R,s,t,c)$ is said to be perfect (resp. semiperfect) if $U,R$ are perfect (resp. semiperfect).
\end{definition}

Perfect group algebraic spaces are stable under certain fibre products.
\begin{proposition}
Let $(F,m_{0}),(G,m_{1}),(H,m_{2})$ be group algebraic spaces over $B$ with homomorphisms $(F,m_{0})\rightarrow(H,m_{2})$ and $(G,m_{1})\rightarrow(H,m_{2})$. Suppose that $(H,m_{2})$ is semiperfect such that there exist perfect schemes $U,V\in\ObSchS$ and surjective \'{e}tale maps $h_{U}\rightarrow F,h_{V}\rightarrow G$ making $h_{U}\times_{H}h_{V}$ represented by a perfect scheme. If $(F,m_{0}),(G,m_{1})$ are perfect, then $(F\times_{H}G,m)$ is a perfect group algebraic space over $B$, where
$$m:(F\times_{H}G)\times_{B}(F\times_{H}G)\rightarrow F\times_{H}G$$
is a morphism of algebraic spaces over $B$.
\end{proposition}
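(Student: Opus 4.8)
The plan is to treat the two assertions of the proposition separately: that the underlying algebraic space $F\times_{H}G$ is perfect, and that it carries a group structure $m$ turning it into a group algebraic space over $B$.

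For perfectness, note that since $(H,m_{2})$ is a group algebraic space over $B$, it is in particular an algebraic space over $S$, so its diagonal morphism is representable. The standing hypotheses then furnish precisely the input of \proref{A4}: perfect schemes $U,V\in\ObSchS$ together with surjective \'{e}tale maps $h_{U}\rightarrow F$ and $h_{V}\rightarrow G$ for which $h_{U}\times_{H}h_{V}$ is represented by a perfect scheme, while $F$ and $G$ are perfect. Applying \proref{A4} directly gives that $F\times_{H}G$ is a perfect algebraic space over $S$; in fact the semiperfectness of $(H,m_{2})$ is not needed for this conclusion.

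For the group structure, I would invoke the standard fact that the underlying algebraic space of a fibre product of group algebraic spaces is the fibre product of the underlying algebraic spaces. By Yoneda it suffices to work with functors of points: for every algebraic space $T$ over $B$ the sets $\mathrm{Hom}_{B}(T,F)$, $\mathrm{Hom}_{B}(T,G)$, $\mathrm{Hom}_{B}(T,H)$ are groups via $m_{0},m_{1},m_{2}$, and since $F\rightarrow H$ and $G\rightarrow H$ are homomorphisms of group algebraic spaces the induced maps to $\mathrm{Hom}_{B}(T,H)$ are group homomorphisms. Hence
$$\mathrm{Hom}_{B}(T,F\times_{H}G)=\mathrm{Hom}_{B}(T,F)\times_{\mathrm{Hom}_{B}(T,H)}\mathrm{Hom}_{B}(T,G)$$
is a subgroup of $\mathrm{Hom}_{B}(T,F)\times\mathrm{Hom}_{B}(T,G)$, functorially in $T$, which exhibits $F\times_{H}G$ as a group algebraic space over $B$. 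Unwinding, the multiplication is the morphism $m\colon(F\times_{H}G)\times_{B}(F\times_{H}G)\rightarrow F\times_{H}G$ which, under the canonical identification $(F\times_{H}G)\times_{B}(F\times_{H}G)\cong(F\times_{B}F)\times_{H\times_{B}H}(G\times_{B}G)$ obtained by rearranging iterated fibre products, is induced by $m_{0}\times m_{1}$ together with the fact that $m_{2}$ is intertwined with the two homomorphisms; the unit section $B\rightarrow F\times_{H}G$ and the inversion are produced and checked against the group axioms in the same way. Since both $(F\times_{H}G)\times_{B}(F\times_{H}G)$ and $F\times_{H}G$ are algebraic spaces, $m$ is automatically a morphism of algebraic spaces over $B$.

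Combining the two parts, $(F\times_{H}G,m)$ is a perfect group algebraic space over $B$. The crux of the argument is the perfectness of $F\times_{H}G$, and this is entirely absorbed into \proref{A4}, whose proof rests on the hypothesis that $h_{U}\times_{H}h_{V}$ is represented by a perfect scheme; the construction of $m$ and the verification of the group axioms are routine bookkeeping with iterated fibre products, so I anticipate no genuine obstacle there.
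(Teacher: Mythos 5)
Your proposal is correct and follows essentially the same route as the paper: the paper likewise obtains the group structure from the fact that fibre products exist in the category of groups (computed on underlying sheaves) and deduces perfectness of $F\times_{H}G$ from the fibre-product result (the paper cites Proposition~\ref{A7}, which itself reduces to Proposition~\ref{A4}). Your observation that the semiperfectness of $H$ is not actually needed, since Proposition~\ref{A4} only requires a representable diagonal, is accurate and consistent with how the paper's own Proposition~\ref{A7} is derived.
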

\begin{proof}
It is easy to check that $(F\times_{H}G,m)$ is a group algebraic space over $B$ since the category of groups has fibre products. Then the statement follows from Proposition \ref{A7}.
\end{proof}

Meanwhile, perfect groupoids in algebraic spaces are stable under certain fibre products.
\begin{proposition}
Suppose that we are given the following data:
\begin{enumerate}
  \item
  Groupoids in algebraic spaces $(U,R,s,t,c),(U',R',s',t',c'),(U'',R'',s'',t'',c'')$ over $B$.
  \item
  Morphisms $(U,R,s,t,c)\rightarrow(U'',R'',s'',t'',c'')$ and $(U',R',s',t',c')\rightarrow(U'',R'',s'',t'',c'')$ of groupoids in algebraic spaces over $B$.
  \item
  $(U'',R'',s'',t'',c'')$ is semiperfect such that there exist perfect schemes $W,W',W'',W'''\in\ObSchS$ and surjective \'{e}tale maps $h_{W}\rightarrow U,h_{W'}\rightarrow U',h_{W''}\rightarrow R,h_{W'''}\rightarrow R'$ making $h_{W}\times_{U''}h_{W'},h_{W''}\times_{R''}h_{W'''}$ represented by perfect schemes.
\end{enumerate}
If $(U,R,s,t,c),(U',R',s',t',c')$ are perfect, then $(U\times_{U''}U',R\times_{R''}R',s''',t''',c''')$ is a perfect groupoid in algebraic spaces over $B$, where
$$s''',t''':R\times_{R''}R'\rightarrow U\times_{U''}U'\ {\rm and}\ c''':(R\times_{R''}R')\times_{s''',(U\times_{U''}U'),t'''}(R\times_{R''}R')\rightarrow R\times_{R''}R'$$
are morphisms of algebraic spaces over $B$.
\end{proposition}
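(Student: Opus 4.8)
The plan is to follow the template of the proof of the preceding proposition on perfect group algebraic spaces: first produce $(U\times_{U''}U',\,R\times_{R''}R',\,s''',t''',c''')$ as a genuine groupoid in algebraic spaces over $B$, and then establish perfectness of its object space $U\times_{U''}U'$ and its arrow space $R\times_{R''}R'$ by invoking \proref{A4} (equivalently the perfectness assertion of \proref{A7}) twice.

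First I would recall that the category of groupoids in algebraic spaces over $B$ admits fibre products, computed componentwise. Given the morphisms of groupoids in (2), the objects $U\times_{U''}U'$ and $R\times_{R''}R'$ are algebraic spaces over $B$ by \cite[Tag04T9]{StackProject}; the source and target maps $s''',t'''\colon R\times_{R''}R'\rightarrow U\times_{U''}U'$ are induced from $s,t$ and $s',t'$ by the universal property of the fibre product, using that $R\rightarrow R''\xrightarrow{s''}U''$ agrees with $R\xrightarrow{s}U\rightarrow U''$ and likewise for the primed data; and $c'''$ is obtained from $c$ and $c'$ through the canonical identification
$$(R\times_{R''}R')\times_{s''',\,U\times_{U''}U',\,t'''}(R\times_{R''}R')\;\simeq\;(R\times_{s,U,t}R)\times_{R''\times_{s'',U'',t''}R''}(R'\times_{s',U',t'}R'),$$
which holds because fibre products commute with one another. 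The groupoid identities for $(s''',t''',c''')$ descend from those of the three given groupoids. Hence $(U\times_{U''}U',R\times_{R''}R',s''',t''',c''')$ is a groupoid in algebraic spaces over $B$.

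It remains to check perfectness. Apply \proref{A4} to the cospan $U\rightarrow U''\leftarrow U'$: here $U''$, being an algebraic space, has representable diagonal; $h_{W}\rightarrow U$ and $h_{W'}\rightarrow U'$ are surjective \'{e}tale maps from the perfect schemes $W,W'$; and $h_{W}\times_{U''}h_{W'}$ is represented by a perfect scheme by hypothesis~(3). Since $U,U'$ are perfect, it follows that $U\times_{U''}U'$ is a perfect algebraic space. Repeating the argument for the cospan $R\rightarrow R''\leftarrow R'$ with the covers $h_{W''}\rightarrow R$, $h_{W'''}\rightarrow R'$ and the assumption that $h_{W''}\times_{R''}h_{W'''}$ is represented by a perfect scheme shows that $R\times_{R''}R'$ is perfect. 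Since the object space and the arrow space are both perfect, the groupoid $(U\times_{U''}U',R\times_{R''}R',s''',t''',c''')$ is perfect by definition, which completes the proof.

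The only slightly fiddly point is the explicit construction of $c'''$ together with the verification of the groupoid axioms; but this is purely formal and entirely parallel to the group case treated just above, so the substantive content is the double application of \proref{A4}. Note that, just as with \proref{A7}, only the existence of the perfect-scheme covers and of the perfect representations of $h_{W}\times_{U''}h_{W'}$ and $h_{W''}\times_{R''}h_{W'''}$ is actually used in the argument; the semiperfectness of $(U'',R'',s'',t'',c'')$ enters only to guarantee, via \proref{A7}, that the output may additionally be regarded as a fibre product in the appropriate category of perfect objects.
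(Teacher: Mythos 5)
Your proposal is correct and follows essentially the same route as the paper: the paper obtains the groupoid structure on $(U\times_{U''}U',R\times_{R''}R',s''',t''',c''')$ by citing \cite[\S3.2, Theorem 5]{Acosta} (where you sketch the componentwise construction directly) and then concludes perfectness of both $U\times_{U''}U'$ and $R\times_{R''}R'$ via Proposition \ref{A7}, whose perfectness assertion rests on Proposition \ref{A4}, exactly the double application you make. Your observation that only Proposition \ref{A4} is needed and that the semiperfectness of $(U'',R'',s'',t'',c'')$ plays no role in the perfectness conclusion is accurate and, if anything, slightly cleaner than the paper's appeal to Proposition \ref{A7}.
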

\begin{proof}
It follows from \cite[\S3.2, Theorem 5]{Acosta} that $(U\times_{U''}U',R\times_{R''}R',s''',t''',c''')$ is a groupoid in algebraic spaces over $B$. Then the statement follows from Proposition \ref{A7}.
\end{proof}

It is clear that every perfect groupoid in algebraic spaces gives rise to a perfect pre-equivalence relation.
\begin{lemma}
Let $(U,R,s,t,c)$ be a perfect groupoid in algebraic spaces over $B$. Then the morphism $j:R\rightarrow U\times_{B}U$ is a perfect pre-equivalence relation.
\end{lemma}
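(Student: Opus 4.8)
The plan is to reduce the statement to two facts: (i) the morphism $j=(t,s)\colon R\to U\times_B U$ attached to any groupoid in algebraic spaces is automatically a pre-equivalence relation, and (ii) perfectness of such a $j$ is, by the definition given above, nothing more than perfectness of $R$ and of the two copies of $U$. So the proof is really an unwinding of definitions together with one standard structural fact.

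First I would recall the construction underlying (i): given a groupoid $(U,R,s,t,c)$ over $B$, the identity morphism $e\colon U\to R$, the inverse morphism $i\colon R\to R$, and the composition $c\colon R\times_{s,U,t}R\to R$ witness, respectively, that $\Delta_U\colon U\to U\times_B U$ factors through $j$ (reflexivity), that $j$ is invariant under the flip of $U\times_B U$ (symmetry), and that $j$ satisfies the transitivity condition expressed via the fibre product $R\times_{s,U,t}R$. This is the algebraic-space analogue of the corresponding statement for schemes in \cite[Tag02YE]{StackProject}, and its verification is purely formal: one transports the identities among $s,t,e,i,c$ through the Yoneda embedding into the category of sheaves on $({\rm Sch}/B)_{fppf}$ and checks the pre-equivalence relation axioms pointwise. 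Hence $j$ is a pre-equivalence relation of algebraic spaces over $B$.

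For (ii): by the definition of a perfect groupoid in algebraic spaces, the hypothesis that $(U,R,s,t,c)$ is perfect means precisely that $U$ and $R$ are perfect algebraic spaces over $B$. By the definition of a perfect pre-relation, the map $j\colon R\to U\times_B U$ is then a perfect pre-relation, and since it is a pre-equivalence relation by (i), it is a perfect pre-equivalence relation. The only place where any care is needed is matching up the groupoid axioms with the pre-equivalence relation axioms in step (i); everything else is immediate from unwinding the definitions, so there is no genuine obstacle beyond this bookkeeping.
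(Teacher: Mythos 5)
Your proof is correct and is essentially the argument the paper intends: the paper states this lemma without proof as clear, and the content is exactly your two steps — the standard fact that $j=(t,s)$ attached to any groupoid in algebraic spaces is a pre-equivalence relation, plus the observation that perfectness of the pre-relation is, by the paper's definitions, literally the same condition ($U$ and $R$ perfect) as perfectness of the groupoid.
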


Moreover, every perfect equivalence relation gives rise to a perfect groupoid in algebraic spaces.
\begin{lemma}
Let $j:R\rightarrow U\times_{B}U$ be a perfect equivalence relation over $B$. Then there is a unique way to extend it to a perfect groupoid in algebraic spaces $(U,R,s,t,c)$ over $B$.
\end{lemma}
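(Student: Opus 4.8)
The plan is to reduce the statement to the classical correspondence between equivalence relations and groupoids, and then to check that the adjective ``perfect'' is carried along for free.

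First I would recall the construction, which is valid for any equivalence relation of algebraic spaces over $B$ and requires no perfectness hypothesis. Given the monomorphism $j\colon R\to U\times_B U$, one sets $t=\mathrm{pr}_0\circ j$ and $s=\mathrm{pr}_1\circ j$; reflexivity of the equivalence relation provides a unique section $e\colon U\to R$ of $j$ over the diagonal $\Delta\colon U\to U\times_B U$; symmetry provides a unique $i\colon R\to R$ with $j\circ i=\sigma\circ j$, where $\sigma$ interchanges the two factors; and transitivity, combined with the fact that $j$ is a monomorphism, yields a unique composition morphism $c\colon R\times_{s,U,t}R\to R$, namely the unique factorization through $j$ of the morphism $R\times_{s,U,t}R\to U\times_B U$ recording the outer two components. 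Here $R\times_{s,U,t}R$ is again an algebraic space over $B$, so $c$ is a genuine morphism of algebraic spaces over $B$. The groupoid axioms for $(U,R,s,t,c)$ are then verified by post-composing with the monomorphism $j$ (or with $j\times_B j$, as appropriate), where they collapse to the three defining properties of an equivalence relation. This is exactly the content of the relevant statement in \cite{StackProject} on groupoids in algebraic spaces, which I would cite rather than reprove.

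Uniqueness is part of that same package: in any groupoid structure $(U,R,s',t',c')$ extending $j$, the morphisms $s',t'$ are forced to be the two projections of $j$, and $c'$ is then forced as well precisely because $j$ is a monomorphism. Hence $j$ admits at most one extension to a groupoid, and a fortiori at most one extension to a perfect groupoid.

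Finally, I would observe that the groupoid just built is perfect. By hypothesis $j$ is a \emph{perfect} equivalence relation, which by definition means exactly that $U$ and $R$ are perfect algebraic spaces; and by definition $(U,R,s,t,c)$ is a perfect groupoid exactly when $U$ and $R$ are perfect. So perfectness is automatic, and the real work is entirely in the first step. The only point needing care there is the bookkeeping---confirming that $e$, $i$, and especially $c$ are honest morphisms of algebraic spaces over $B$ and that the groupoid identities hold---all of which is routine and already recorded in \cite{StackProject}. In short, the substantive input is the classical equivalence-relation-to-groupoid construction, and the ``perfect'' label costs nothing extra.
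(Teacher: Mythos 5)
Your proposal is correct and is exactly the argument the paper intends: the paper states this lemma without proof, implicitly relying on the standard unique extension of an equivalence relation to a groupoid (as in \cite{StackProject}), combined with the fact that perfectness of a (pre-)relation or groupoid is by definition a condition on $U$ and $R$ alone, so it transfers for free. Your write-up simply makes that implicit reasoning explicit, with the monomorphism property of $j$ correctly doing the work for uniqueness of $c$.
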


\end{document}